\documentclass[11pt,twoside]{article}
\usepackage{times}
\usepackage{amsmath,amssymb}
\usepackage{color}
\usepackage{amsthm}
\usepackage{tikz}
\usepackage{lineno}

\pagestyle{myheadings}
 \textwidth=15truecm 
 \textheight=21truecm 
 \oddsidemargin=0mm
 \evensidemargin=0mm
 \headheight=10mm
 \headsep=3mm
 \footskip=4mm
 \topmargin=0mm
 \topmargin=0mm
\allowdisplaybreaks

\theoremstyle{plain}
\newtheorem{thm}{Theorem}[section]

\newtheorem{lem}[thm]{Lemma}
\newtheorem{prop}[thm]{Proposition}

\newcommand{\mo}{\mathcal{O}}

\theoremstyle{definition}
\newtheorem{defn}{Definition}[section]
\theoremstyle{remark}
\newtheorem{rem}{Remark}[section]
\theoremstyle{claim}

\numberwithin{equation}{section}
\renewcommand{\theequation}{\thesection.\arabic{equation}}


\def\p{\partial}
\def\ve{\varepsilon}
\def\f{\frac}
\def\na{\nabla}

\def\al{\alpha}

\def\vp{\varphi}

\def\g{\gamma}

\def\o{\omega}
\def\ds{\displaystyle}

\def\mk{\mathcal {K}}
\def\mo{\mathcal {O}}
\def\c{\boldsymbol{c}}

 \allowdisplaybreaks

\begin{document}
 \footskip=0pt
 \footnotesep=2pt
\let\oldsection\section
\renewcommand\section{\setcounter{equation}{0}\oldsection}
\renewcommand\thesection{\arabic{section}}
\renewcommand\theequation{\thesection.\arabic{equation}}
\newtheorem{claim}{\noindent Claim}[section]
\newtheorem{theorem}{\noindent Theorem}[section]
\newtheorem{lemma}{\noindent Lemma}[section]
\newtheorem{proposition}{\noindent Proposition}[section]
\newtheorem{definition}{\noindent Definition}[section]
\newtheorem{remark}{\noindent Remark}[section]
\newtheorem{corollary}{\noindent Corollary}[section]
\newtheorem{example}{\noindent Example}[section]

\title{Global smooth solutions of
3-D null-form wave equations in exterior domains with
Neumann boundary conditions}

\author{Li Jun$^{1,*}$, \qquad Yin Huicheng$^{2, }$\footnote{Li Jun (lijun@nju.edu.cn) and Yin Huicheng
(huicheng@nju.edu.cn, 05407@njnu.edu.cn) are supported by the NSFC (No.11371189, No.11571177),
and the Priority Academic Program Development of Jiangsu Higher Education
Institutions. }\vspace{0.5cm}\\
\small 1.
Department of Mathematics and
IMS, Nanjing University, Nanjing 210093, P.R.China.\\
\small 2.
School of Mathematical Sciences, Nanjing Normal University, Nanjing 210023, P.R.China.\\}
\vspace{0.5cm}

\date{}
\maketitle
\centerline{}
\vskip 0.3 true cm

\centerline {\bf Abstract} \vskip 0.3 true cm

The paper is devoted to investigating long time behavior of smooth small data solutions to 3-D quasilinear wave
equations outside of compact  convex obstacles with Neumann boundary conditions.
Concretely speaking, when the surface of a 3-D compact convex
obstacle is smooth and the quasilinear wave equation fulfills the null condition,
we prove that the smooth small data solution exists globally
provided that the Neumann boundary condition on the exterior domain is given.
The approach is based on the weighted $L^2$ time-space energy estimates associated with modified Klainerman vector fields
which are introduced by Keel-Smith-Sogge \cite{KSS4} and Metcalfe-Sogge \cite{MC2} as well as the classical energy
decay estimates on the first order derivatives of  solutions
for the linear wave equation with Neumann boundary condition by Morawetz \cite{MCS}. One of the main ingredients
in the current paper is the establishment of local energy decay estimates of the
solution itself. As an application of the main result,
the global stability to 3-D static compressible Chaplygin gases in exterior domain is shown
under the initial irrotational perturbation with small amplitude.

\vskip 0.2 true cm

{\bf Keywords:} Quasilinear wave equation, exterior domain, Neumann boundary condition,
null condition, convex obstacle, elliptic regularity
\vskip 0.2 true cm

{\bf Mathematical Subject Classification 2000:} 35L10, 35L71, 35L05.

\section{Introduction}
In this paper, we provide  a rigorous mathematical analysis on the global existence
of smooth small data solutions to 3-D quasilinear wave equations outside of
compact convex obstacles with smooth boundaries. There are two main starting points
for investigating this problem. Mathematically, the problem is one of the fundamental topics in
studying initial-boundary value problems for nonlinear wave equations; Physically,
this problem is closely associated with some mathematical problems arising from aerodynamics,
such as the long time stability of motion of compressible gases in exterior domains. To achieve our
destination of global existence, as in the well-known works done by Christodoulou \cite{CD1}
and S.Klainerman \cite{KS3} for the Cauchy problem of 3-D quasilinear wave equations, we require that the nonlinearities
of the equations involve null forms.

So far the blowup
or global existence of smooth small data solutions have been systematically studied
for the initial data problem of quasilinear wave equations in $\mathbb{R}_{+}\times \Bbb R^n$:\footnote{Hereafter,
the Greek letters $\alpha, \beta, \cdots$ are used to denote an index from $0$ to $n$ with $0$ standing for the time variable and Latin letters $i, j, k, \cdots$ are used to denote an index from $1$ to $n$. Moreover, we use $\partial$ and $\nabla$ to denote the time-space gradient and the space gradient respectively. In addition, repeated indices are always understood as summations.}
\begin{equation}\label{0.1}\begin{cases}
Q^{\alpha\beta}(u, \p
u)\p_{\alpha\beta}^2u=0,\\[2mm]
(u, \p_t u)(0,x)=\ve (u_0, u_1)(x),
\end{cases}
\end{equation}
where $x_0=t$, $x=(x_1, ..., x_n)$, $\ve>0$ is a sufficiently small constant, $u_0(x), u_1(x)$ $\in
C_0^{\infty}(\Bbb R^n)$, $Q^{\alpha\beta}(u, \p u)=Q^{\beta\alpha}(u, \p u)$ are smooth functions in their arguments.
Without loss of generality, one can write
\begin{equation*}\label{0.2}
Q^{\alpha\beta}(u,\p u)=c^{\alpha\beta}+d^{\alpha\beta}u+e^{\alpha\beta}_{\gamma}\p_{\g}u+O(|u|^2+|\p u|^2),
\end{equation*}
where $c^{\alpha\beta}, d^{\alpha\beta}$ and $e^{\alpha\beta}_k$ are constants,
$c^{\alpha\beta}\p_{\alpha\beta}^2=\square\equiv \p_t^2-\Delta$.
By the well-known results in
\cite{HL2}, \cite{KS1}, \cite{KS2}, \cite{LI1} and \cite{LH1}, one has that \eqref{0.1} admits a
global smooth small data solution for $n\ge 4$. If $n=3$, the blowup or global existence of
smooth solutions to \eqref{0.1} have been basically established, one can
see \cite{AS1},  \cite{AS4},  \cite{DLY1}, \cite{DLY2}, \cite{CD1}, \cite{CD2}, \cite{HL2},
\cite{KS3},  \cite{LH2}, \cite{LH3}, \cite{SCD1}, \cite{Speck}
and the references therein. If $n=2$ and  the 2-D nonlinear wave in \eqref{0.1}
admits such a form (whose coefficients depend only
on the first order derivatives $\p u$)
\begin{equation}\label{0.3}\begin{cases}
Q^{\alpha\beta}(\p
u)\p_{\alpha\beta}^2u=0,\\[2mm]
(u, \p_t u)(0,x)=\ve (u_0, u_1)(x),\quad x\in\Bbb R^2,
\end{cases}
\end{equation}
where $Q^{\alpha\beta}(\p u)=Q^{\beta\alpha}(\p u)=c^{\alpha\beta}+e^{\alpha\beta}_{\g}\p_{\g}u
+e^{\alpha\beta}_{\g\mu}\p_{\g}u\p_{\mu}u+O(|\p u|^3)$. It is well-known that
when $e^{\alpha\beta}_{\g}\o_{\al}\o_{\beta}\o_{\g}$ $\not\equiv0$
or $e^{\alpha\beta}_{\g\mu}\o_{\al}\o_{\beta}\o_{\g}\o_{\mu}\not\equiv0$ for
$\o_0=-1$ and $\o=(\o_1, \o_2)\in\Bbb S^1$,
the smooth solution $u$ to \eqref{0.3} blows up in finite time
as long as $(u_0(x), u_1(x))\not\equiv0$ (see \cite{AS1}, \cite{AS3}, \cite{HL2} and so on);
when $e^{\alpha\beta}_{\g}\o_{\al}\o_{\beta}\o_{\g}\equiv0$
and $e^{\alpha\beta}_{\g\mu}\o_{\al}\o_{\beta}\o_{\g}\o_{\mu}\equiv0$,
\eqref{0.3} has a global smooth solution $u$ (see \cite{AS2}).

Compared with Cauchy problems \eqref{0.1} and \eqref{0.3}, Dirichlet-wave equations and Neumann-wave
equations are also attractive for their theoretical significance and applicable values. The Neumann-wave equation
as well
as the Dirichlet-wave equation are two fundamental models in studying initial-boundary value problems of quasilinear wave
equations. For the Dirichlet-wave equations, there have been many interesting works on almost global/global existence of
smooth small data solutions to 2-D/3-D quasilinear scalar equations or systems with multiple speeds
(one can see \cite{DZ},
\cite{KSS1}-\cite{KSS3}, \cite{MC2}-\cite{MC5}, \cite{SCD2} and the references therein).
While, for the Neumann-wave equation, to our best knowledge, so far there have been few
global existence results for the 3-D quasilinear equations in \eqref{0.1} except the symmetric solution case
(see \cite{GP1} and so on). In this paper, we focus on this problem under the convex
condition of compact smooth obstacles.
It is proved that the smooth small data solutions  on the exterior of the convex domain exist globally when
the null conditions are satisfied and some ``admissible condition" is posed. More concretely,
the following initial-boundary value problem of
3-D quasilinear wave equation is considered:
\begin{equation}\label{Mpro}\begin{cases}
\Box u=\mathcal{N}(\partial u, \partial^2 u),\qquad  \  (t, x)\in\mathbb{R}_{+}\times \mathcal{O},\\[2mm]
\partial_{\boldsymbol{\nu}}u=0,\qquad\qquad\qquad \ (t, x)\in \mathbb{R}_{+}\times\partial\mathcal{O},\\[2mm]
(u, \partial_t u)(0, x)=\varepsilon (u_0, u_1)(x),\ \  x\in \mathcal{O},
\end{cases}
\end{equation}
where the nonlinearity $\mathcal{N}(\partial u, \partial^2 u)$ is smooth in its arguments and linear in $\partial^2 u$,
$\mo=\mathbb{R}^3\backslash\mk$ and $\mk$ is a compact convex obstacle with smooth boundary,
$\boldsymbol{\nu}$ stands for the unit outer normal direction of $\mathbb{R}_{+}\times\partial\mathcal{O}$.
Without loss of generality, we assume throughout that $\mk$ contains the origin point and
\begin{equation}\label{CondK}
\mk=\{x=r\omega\in\mathbb{R}^3: r<b(w), \o\in\Bbb S^2\}
\end{equation}
with $b$ being a smooth convex function and  $\frac{3}{4}<b(\omega)<1$.

In addition, $\mathcal{N}(\partial u, \partial^2 u)$ can be written as
\begin{equation}\label{Null1}
\mathcal{N}(\partial u, \partial^2 u)=\mathcal{S}(\partial u)
+\mathcal{Q}^{\alpha\beta}(\partial u)\partial_{\alpha\beta}^2 u
\end{equation}
with
\begin{equation*}
\mathcal{S}(\partial u)=\mathcal{S}^{\alpha\beta}\partial_{\alpha}u\partial_{\beta}u+O(|\partial u|^3),\ \
\mathcal{Q}^{\alpha\beta}(\partial u)=\mathcal{Q}^{\alpha\beta}_{\mu}\partial_{\mu}u+O(|\partial u|^2),\ \
\end{equation*}
where $\mathcal{Q}^{\alpha\beta}(\partial u)=\mathcal{Q}^{\beta\alpha}(\partial u)$, $\mathcal{S}^{\alpha\beta}=\mathcal{S}^{\beta\alpha}$ and $\mathcal{Q}_{\mu}^{\alpha\beta}=\mathcal{Q}_{\mu}^{\beta\alpha}$ are some constants. Meanwhile,
$\mathcal{N}(\partial u, \partial^2 u)$ is assumed to satisfy the null conditions:
\begin{equation}\label{Null2}
\mathcal{S}^{\alpha\beta}\omega_{\alpha}\omega_{\beta}\equiv 0\quad {\text and}
\quad \mathcal{Q}^{\alpha\beta}_{\mu}\omega_{\mu}\omega_{\alpha}\omega_{\beta}\equiv 0
\qquad \text{on}\qquad \omega_0^2=\ds\sum_{i=1}^3\omega_i^2.
\end{equation}
On the other hand, motivated by the fixed wall condition of compressible Chaplygin gases on the exterior domain,
we will pose the following ``admissible condition''
on $\mathcal{N}(\partial u, \partial^2 u)$ for arbitrary smooth functions $v$ and $w$ satisfying
$\partial_{\boldsymbol{\nu}}v|_{\mathbb{R}_{+}\times\partial\mathcal{O}}=0$ and $\partial_{\boldsymbol{\nu}}w|_{\mathbb{R}_{+}\times\partial\mathcal{O}}=0$:
\begin{equation}\label{AC}
\mathcal{Q}^{\alpha\beta}(\partial v)\boldsymbol{\nu}^{\alpha}\partial_{\beta}w\equiv 0,\ \  \ (t, x)\in\mathbb{R}_{+}\times\partial\mathcal{O},
\end{equation}
where $\boldsymbol{\nu}^{\alpha}$ stands for the $\alpha^{th}$ component of $\boldsymbol{\nu}$.
Some explanations on condition \eqref{AC} will be given in Remark \ref{AdC} below.

The goal of the current paper is to find the global smooth small data solution $u$ to \eqref{Mpro}
with conditions \eqref{Null1}-\eqref{AC} and compatibility conditions up to infinite order
for the initial-boundary values.
Here we point out that in order to obtain the smooth solution $u$ of  \eqref{Mpro},
it is necessary to pose the corresponding compatible conditions for the initial-boundary values
(see Remark \ref{Comp}).

To state the main results conveniently, we now introduce some notations and functional spaces throughout the paper.
Let $\|\cdot\|_{L^2(D)}$ stand for the usual $L^2-$norm on domain $D$ and  $\|\cdot\|$
denote $\|\cdot\|_{L^2(\mo)}$.
In addition, $\left<\cdot\right>=:\sqrt{1+|\cdot|^2}$, $A\lesssim B$ means that
there exists a universal positive constant $C$ such that $A\leq CB$.
The main result of this paper is:
\begin{thm}\label{them1} For the given compact convex obstacle $\mk$ in
\eqref{CondK}, if the nonlinearity $\mathcal{N}(\partial u, \partial^2 u)$ satisfies \eqref{Null1}-\eqref{AC} and
the initial smooth data $(u_0(x), u_1(x))$ satisfy the compatibility conditions up to infinite order,
then there is a constant $\varepsilon_0>0$ such that for all $0<\varepsilon\leq \varepsilon_0$,
when
\begin{equation}\label{intialcon}
\sum\limits_{|\beta|\leq 69}\|\left<x\right>^{|\beta|}\nabla^{\beta}u_0\|
+\sum\limits_{|\beta|\leq 68}\|\left<x\right>^{1+|\beta|}\nabla^{\beta} u_1\|\lesssim 1,
\end{equation}
the problem \eqref{Mpro} has a unique global smooth solution $u\in C^{\infty}(\mathbb{R}_{+}\times\mathcal{O})$.
Moreover,
\begin{equation}\label{Decay}
(1+t+|x|)(|u(t, x)|+|\partial u(t, x)|)\lesssim \varepsilon.
\end{equation}
\end{thm}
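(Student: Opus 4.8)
The plan is to run a continuity/bootstrap argument on a suitably chosen weighted energy functional, exploiting the null conditions to close the estimates. First I would fix a large integer $N$ (here $N\approx 69$, matching \eqref{intialcon}) and introduce the family of modified Klainerman vector fields adapted to the exterior domain — the spatial rotations $\Omega_{ij}=x_i\p_j-x_j\p_i$, the scaling $S=t\p_t+r\p_r$, and the translations $\p_\alpha$ — but, because the boundary breaks the full Lorentz symmetry, I would use only those fields that are compatible with $\mo$ (rotations and $\p_t$ act cleanly, while $\p_i$ and $S$ must be handled via elliptic estimates near $\p\mo$). Define the master quantity
\[
M_N(t)=\sup_{0\le s\le t}\Big(\sum_{|a|\le N}\|\p Z^a u(s,\cdot)\|_{L^2(\mo)}+\sum_{|a|\le N-2}\|Z^a u(s,\cdot)\|_{L^2(\mo)}\Big)+(\text{weighted space-time norms}),
\]
where the weighted $L^2$ space-time norm is the Keel–Smith–Sogge/Metcalfe–Sogge norm $\||\p u|\langle x\rangle^{-1/2-\mu}\|_{L^2([0,t]\times\mo)}$ that captures local energy decay. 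The bootstrap assumption is $M_N(t)\le A\varepsilon$ for a large constant $A$; the goal is to improve it to $M_N(t)\le \tfrac12 A\varepsilon$ for $\varepsilon$ small.

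The core of the argument has three ingredients, which I would carry out in this order. (1) \emph{Commutation and reduction to the boundaryless case near infinity.} Applying $Z^a$ to \eqref{Mpro} produces $\Box Z^a u = Z^a\mathcal{N} + [\text{commutators}]$, with an inhomogeneous Neumann condition $\p_{\boldsymbol\nu}Z^a u = (\text{tangential derivative terms})$ on $\p\mo$ — here the ``admissible condition'' \eqref{AC} is exactly what is needed to ensure the nonlinear contribution to the Neumann data vanishes, so that the boundary condition for the differentiated equation remains homogeneous for the quasilinear part. (2) \emph{Energy estimate with Morawetz decay.} I would combine the standard energy identity for $\Box$ with Neumann data against the Morawetz multiplier, using that $\mk$ is convex so the boundary term has a favorable sign (this is precisely Morawetz's classical estimate cited in the abstract), to obtain both the boundedness of the top-order energy and the weighted space-time bound on $\p Z^a u$. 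Then, the \emph{local energy decay of $u$ itself} (flagged in the abstract as a main new ingredient) feeds the zeroth-order norms $\|Z^a u\|$ that enter through the $\mathcal S(\p u)$-type and $u$-dependent lower-order terms. (3) \emph{Pointwise decay and nonlinear closure.} Using Klainerman–Sobolev inequalities on the exterior domain (with the elliptic-regularity trick to recover the missing vector fields near $\p\mo$), the $L^2$ bounds give $|\p Z^a u|\lesssim \varepsilon\langle t+|x|\rangle^{-1}\langle t-|x|\rangle^{-1/2}$ for $|a|$ up to $\approx N/2$, which is \eqref{Decay}. The null structure \eqref{Null2} converts one of the two ``bad'' derivatives in each quadratic nonlinearity into a ``good'' derivative $\bar\p$ (tangent to the light cone), gaining an extra factor $\langle t-|x|\rangle/\langle t+|x|\rangle$; integrating the resulting $\int_0^t \langle s\rangle^{-1+C A\varepsilon}\,ds$-type bounds (or the cleaner time-integrable versions coming from the weighted norms) against the energy inequality closes the bootstrap for $\varepsilon$ small, since the gain beats the logarithmic loss.

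The main obstacle, and the place where most of the work lies, is step (1)–(2) in combination: controlling the commutators $[Z^a,\Box]$ and the boundary terms when $Z$ includes $S$ and $\p_i$, which do not preserve $\mo$. The standard remedy — bounding $\|Z^a u\|$ near $\p\mo$ by elliptic regularity in terms of $\Box u = \mathcal N$ and tangential/time derivatives — costs regularity and forces the loss of several derivatives at each step (hence the gap between the $69$ in \eqref{intialcon} and the $\approx N/2$ needed for decay), and it requires the \emph{local energy decay estimate for $u$ itself}, not just for $\p u$, because the elliptic estimate on an exterior domain with Neumann data only controls $u$ modulo constants and needs a Poincaré-type / local-decay input to pin down the zeroth-order part. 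Establishing that local energy decay for $u$ — presumably via resolvent estimates or a positive commutator argument using convexity of $\mk$, uniformly down to the boundary — is the technical heart; once it is in hand, the nonlinear iteration is a (lengthy but) routine combination of the null-form gains with the weighted Metcalfe–Sogge estimates.
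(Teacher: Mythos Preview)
Your overall architecture is right---continuity argument, modified vector fields, local energy decay of $u$ itself as the new ingredient, null-form gain for closure---and you correctly identify the role of \eqref{AC} in keeping the Neumann condition homogeneous after commutation. But two steps are done quite differently in the paper, and in one of them your version has a genuine gap.

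The gap is in your step (3). You propose to extract the pointwise decay \eqref{Decay} from the $L^2$ bounds via Klainerman--Sobolev on $\mo$, ``with the elliptic-regularity trick to recover the missing vector fields.'' Elliptic regularity recovers \emph{spatial} derivatives near $\p\mo$, but it does not recover the \emph{hyperbolic rotations} $L_i=x_i\p_0+t\p_i$, and these are precisely what Klainerman--Sobolev needs to convert $L^2$ control into $\langle t+r\rangle^{-1}$ decay. With only $Z=\{\p_\alpha,\Omega\}$ and the scaling $L$, Klainerman--Sobolev gives at best $\langle x\rangle^{-1}$ decay, not $\langle t+r\rangle^{-1}$. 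The paper addresses this head-on (see the introduction and Theorem~\ref{Point}): pointwise decay is obtained not from Klainerman--Sobolev but from a direct representation-formula estimate for the free wave equation (\`a la \cite{KSS3}), combined with cutoffs that reduce the exterior problem to a boundaryless one plus a term supported near $\p\mo$, the latter being handled by the exponential local energy decay. Consequently the bootstrap is also inverted relative to your scheme: the paper assumes the \emph{pointwise} bound \eqref{induction1}, derives energy norms that are allowed to \emph{grow} like $(1+t)^{1/10}$ or $(1+t)^{1/20}$ (Part~I of Section~\ref{V}), and then feeds those back into Theorem~\ref{Point} to improve the pointwise bound. Your proposal to bootstrap on the energy $M_N(t)$ and then read off decay would stall at the $\langle x\rangle^{-1}$ rate.

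A second, smaller difference: the local energy decay of $u$ itself is not obtained via resolvent estimates or a positive-commutator argument. The paper makes a coordinate change flattening $\p\mo$ to a sphere, takes the spherical mean to reduce to a one-dimensional wave equation on $(1,\infty)$ with an explicit boundary ODE, and reads off the exponential decay of $V_0$ directly (Lemma~\ref{Exponetial}, equations \eqref{EE}--\eqref{EE1}). This is more elementary than what you anticipate and is where the assumption $3/4<b(\omega)<1$ in \eqref{CondK} is used.
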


\begin{rem}\label{AdC} We pose the ``admissible condition'' \eqref{AC} in Theorem \ref{them1} due to the following two reasons:
First, this kind of condition naturally comes from the fixed wall condition of Chaplygin gases on the exterior
domain (see details in Sect.\ref{VII} below). Second, under condition \eqref{AC}
one still obtains the Neumann-type boundary condition for \eqref{Mpro} when the commutator arguments are taken
(see the whole Sect.\ref{III} below).

\end{rem}

\begin{rem}\label{Comp}
We give some illustrations on the compatibility conditions of initial-boundary values
mentioned in Theorem \ref{them1}. Let $J_k u=\{\nabla^{\beta}u: 1\leq |\beta|\leq k\}$
be the collection of all spatial derivatives up to $k$ order of $u$. If $u\in \ds\bigcap_{k=0}^m C^k([0, T], H^{m-k}(\mo))$
$(m\geq 2)$ is a  local solution of \eqref{Mpro}, one then has $\partial_0^k u(0, \cdot)=\psi_k(J_k u_0, J_{k-1}u_1)\ (0\leq k\leq m)$  for certain
functions $\psi_k$ depending on the nonlinearity $\mathcal{N}$. In this case, the $m^{th}$ order compatibility condition
for \eqref{Mpro} is just the requirement that all the $\partial_{\boldsymbol{\nu}}\psi_k$ vanish on $\partial\mathcal{O}$ for
$0\leq k\leq m-1$. In addition, $(u_0, u_1)\in C^{\infty}(\mathcal{O})$ is said to satisfy
the compatibility conditions up to infinite order if $m^{th}$ order compatibility condition holds for all $m\in\Bbb N$.

\end{rem}

\begin{rem}\label{DIRICHLET} For the Dirichlet boundary value problem, some authors have established
the almost global or global existence of smooth small data solutions to quasilinear scalar wave equations
or systems with multiple speeds (see \cite{KSS1}-\cite{KSS3},  \cite{MC2}-\cite{MC5}). The advantage
of homogenous Dirichlet boundary value for wave equations is that the local-in-space $L^2$
norm of solution $u$ itself can be directly derived through the resulting energy estimates of the first derivatives $\p u$
(one can also be referred to \cite{G-0}).
However, for the Neumann-wave equations, the cases are different as pointed
out in \cite{CXY} and \cite{LWY} due to the lack of the value of solution $u$ on the boundary.
Fortunately, for the Neumann boundary value problem \eqref{Mpro}, we can establish the local-in-space $L^2$ norm of
the solution $u$ itself in this paper.

\end{rem}

The proof of Theorem \ref{them1} will follow some key ideas of Klainerman \cite{KS3}
and Metcalfe-Sogge \cite{MC2} associated with the method of Klainerman's vector fields
and the exponential decay estimate of local energy with respect to Neumann-wave equations shown in \cite{MCS}.
The classical Klainerman's vector fields contain the generators of spatial rotations and time-space translations,
the hyperbolic rotations  as well as the scaling vector field, namely,
\begin{equation*}
Z=\{\partial_{\alpha}, \Omega\},\ \  L_i=x_i\partial_0+t\partial_i,\ \ L=t\partial_0+x_{i}\partial_i
\end{equation*}
with $\Omega=\{x_{2}\partial_{3}-x_{3}\partial_{2}, x_{3}\partial_{1}-x_{1}\partial_{3}, x_{1}\partial_{2}-x_{2}\partial_{1}\}$.
As pointed in \cite{CXY}, \cite{G-0}, \cite{MC2} and so on, the hyperbolic rotations $\{L_i\}$ are not applicable to the current
initial-boundary value problems of quasilinear wave equations. This leads to a key difficulty that we can not
use Klainerman's argument in \cite{KS3} to obtain the standard $t^{-1}$ decay of solution $u$ and $t^{-1}\left<r-t\right>^{-1/2}$ decay of $\partial u$. More precisely,
due to the lack of the hyperbolic rotations $\{L_i\}$,
one can not obtain the pointwise decay estimates directly from weighted energy estimates associated with Klainerman's vector fields
by Klainerman's inequality.  Alternatively, as in \cite{MC2}, to compensate the lack of the hyperbolic
rotations $\{L_i\}$, one may use elliptic regularity estimates to improve the spatial regularities of solution
$u$. In this situation, new difficulties can be overcome by establishing the weighted time-space $L^2$ estimates
associated with Klainerman's vector fields $Z$ and $L$ (this can be regarded as the variation version of Keel-Smith-Sogge estimates
in \cite{KSS4}) as well as the pointwise estimates established in Sect.\ref{IV}. From this, one can convert the $\left<x\right>^{-1}$ decay
of solution $u$, obtained from the above estimates, to the expected decay \eqref{Decay} for deriving the global existence of $u$.
Compared with the classical Klainerman' vector field method for treating the initial data problem
of the quasilinear wave equation in \eqref{Mpro} (see \cite{KS3}, \cite{LH3} and so on),
we require to establish  the local-in-space estimate for the solution $u$ itself other than only for $\p u$ as in  \cite{KS3} and \cite{LH3}
since we shall consider $\chi(x)u$ instead of $u$ for some truncated function $\chi(x)$ so that
problem \eqref{Mpro} can be reduced to the boundaryless case. However, unlike the Dirichlet boundary value problem in \cite{MC2},
there is no direct estimate on the solution $u$ itself
for the Neumann boundary value problem due to the lack of the boundary value of $u$.
To overcome this difficulty, on one hand, we establish the exponential-type decay estimate of solution
$u$  in Lemma \ref{Exponetial}; on the other hand, based on the new observation
that  $L^2$ energy of $\p u$ is bounded under the Neumann  boundary value condition (see Lemma \ref{Timenl}),
we will develop a closed elliptic regularity estimate in Lemma \ref{Elliptic} by utilizing
the $L^2$ norm of $\p u$ to replace the $L^2$ norm of $u$ itself. Combining these two key Lemmas
with $L^2$ time-space estimates and pointwise estimates, motivated by \cite{MC2},
we eventually complete the proof of Theorem \ref{them1}.

The paper is organized as follows. In Sect.\ref{II}, some  basic lemmas associated with null conditions,
Sobolev inequality and elliptic regularity estimates are given.
In Sect.\ref{III}-Sect.\ref{IV}, we establish the weighted time-space $L^2$ estimates
associated with partial modified Klainerman's vector fields and further derive the pointwise decay estimates
of solution $u$ to problem \eqref{Mpro}. These estimates will play an essential role in proving Theorem \ref{them1}
by the continuity induction argument. In Sect.\ref{V}, the assumption used in  continuity induction
argument is closed, and then Theorem \ref{them1} is shown. In Sect.\ref{VI}, the local
existence of solution to problem \eqref{Mpro} is established based on the ``admissible condition'' \eqref{AC} and some compatible
conditions. In Sect.\ref{VII}, as an application of Theorem \ref{them1}, we prove the global
stability of the 3-D static compressible Chaplygin gases in an exterior domain under initial irrotational small-amplitude perturbation. In Appendix \ref{A}, we establish two auxiliary lemmas for the application in the proof of Theorem \ref{them1}.

\vskip 0.2 true cm

In addition, throughout the paper, we will use the following notations and functional spaces.

$\bullet$ For the operator vector field $W=(W_1, \cdots, W_l)$ and $k\in (\mathbb{N}\cup\{0\})^{l}$ for any integer $l$, $W^{k}=\prod\limits_{i=1}^{l}W_i^{k_i}$.

$\bullet$ For any positive constants $R_1>R_2>1\ (i=1, 2)$ and $r=|x|=\sqrt{x_1^2+x_2^2+x_3^2}$,
define $\{r\leq R_1\}=\{x\in\mo: r\leq R_1\}$ and $\{R_2<r<R_1\}=\{x\in\mo: R_2<r<R_1\}.$

$\bullet$ Let $\|\cdot\|_{H^k(D)}\ (\|\cdot\|_{L^2(D)})$ stand for the usual $H^k\ (L^2)-$norm on the domain $D$ and $\|\cdot\|_{H^k}\ (\|\cdot\|)$ denote $\|\cdot\|_{H^k(\mo)}\ (\|\cdot\|_{L^2(\mo)})$.

$\bullet$ $C(\cdot)$ is a positive constant only depending on the variable $\cdot$.

$\bullet$ $\mathcal{L}=t\partial_0+\varrho(x)x^i\partial_i$ is the modified operator of $L=t\partial_0+x_i\partial_i$, 
where $\varrho(x)=\varrho(|x|)$ is a smooth radial symmetric function with $\varrho(|x|)=1$ for $|x|\geq 3/2$, 
$\varrho(|x|)=0$ for $|x|\leq 1$ and $0\leq \varrho'(|x|)\leq 2$.

\section{Preliminaries}\label{II}

In this section, some basic results will be listed or established. These results will play a key role
in proving Theorem \ref{them1}.

\begin{lem}\label{Sobolev}{\bf (Sobolev type inequality \cite{KS3})} Suppose that $v\in C^{\infty}(\mo)$, then for $R\geq 4$,
\begin{equation*}\label{sobolev1}
\|v\|_{L^{\infty}(R/2\leq |x|\leq R)}\lesssim R^{-1}\sum\limits_{|\alpha|+|\beta|\leq 2}\|\Omega^{\alpha}\nabla^{\beta}v\|_{L^2(R/4\leq |x|\leq 2R)},
\end{equation*}
and
\begin{equation*}\label{sobolev2}
\|v\|_{L^{\infty}(R\leq |x|\leq R+1)}\lesssim R^{-1}\sum\limits_{|\alpha|+|\beta|\leq 2}\|\Omega^{\alpha}\nabla^{\beta}v\|_{L^2(R-1\leq |x|\leq R+2)}.
\end{equation*}\qed
\end{lem}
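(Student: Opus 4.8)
The plan is to read the claimed inequality as the dyadic--annular localization of Klainerman's weighted Sobolev inequality \cite{KS3} and to reduce it to the ordinary embedding $H^2\hookrightarrow L^\infty$ on a \emph{fixed-size} spherical shell; the decisive point is that the rotation fields $\Omega$ are scale invariant, so no rescaling of the radial variable is needed. Fix $x_0$ in the target annulus, say $R/2\le|x_0|\le R$ (the case $R\le|x_0|\le R+1$ is handled identically, with $\{R/4\le|x|\le 2R\}$ replaced by $\{R-1\le|x|\le R+2\}$). Since $R\ge 4$, the shell $S:=\{x:|x_0|-1\le|x|\le|x_0|+1\}$ satisfies $|x_0|-1\ge R/4$ and $|x_0|+1\le 2R$, hence $S\subset\{R/4\le|x|\le 2R\}$, and $S$ stays away from the origin (so from $\mk$), so $v$ is smooth on $S$. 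Writing points of $S$ as $x=r\omega$, I identify $S$ with $M:=[\,|x_0|-1,|x_0|+1\,]\times S^2$ carrying the \emph{product} metric $dr^2+g_{S^2}$ and volume $dr\,d\omega$; after the shift $r\mapsto r-|x_0|+1$ every such $M$ is isometric to the single fixed manifold $[0,2]\times S^2$, and under this identification $\partial_r\mapsto\partial_r$, while the Euclidean rotation generators $\Omega_i=\varepsilon_{ijk}x_j\partial_k$ --- being $r$-independent and tangent to the spheres --- map to the standard rotation fields on the unit-sphere factor.

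On the fixed manifold $[0,2]\times S^2$ the fields $\{\partial_r,\Omega_1,\Omega_2,\Omega_3\}$ span the tangent space at each point and have bounded structure constants, so second-order products of them control the full $H^2$ norm; together with Sobolev embedding this gives, with a constant independent of $x_0$ and $R$,
\begin{equation*}
\|v\|_{L^\infty(S)}=\|v\|_{L^\infty(M)}\lesssim\|v\|_{H^2(M)}\lesssim\sum_{j+|\alpha|\le 2}\|\partial_r^{j}\Omega^{\alpha}v\|_{L^2(M,\,dr\,d\omega)}.
\end{equation*}
Transporting back to $S\subset\mathbb{R}^3$ (the vector fields and the function are literally the same) and converting the measure via $dr\,d\omega=r^{-2}\,dx\lesssim R^{-2}\,dx$ on $S$ --- valid because $r=|x|\sim R$ there --- I get $\|v\|_{L^\infty(S)}\lesssim R^{-1}\sum_{j+|\alpha|\le2}\|\partial_r^{j}\Omega^{\alpha}v\|_{L^2(R/4\le|x|\le 2R)}$. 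Since $|v(x_0)|\le\|v\|_{L^\infty(S)}$ and $x_0$ was arbitrary, this is the desired estimate except with $\partial_r^{j}$ in place of $\nabla^{\beta}$.

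Finally I would trade the $\partial_r^{j}$ for ordinary derivatives $\nabla^{\beta}$, $|\beta|\le j$: on $\{R/4\le|x|\le 2R\}$ we have $\partial_r=\frac{x_i}{|x|}\partial_i$ and $[\partial_r,\Omega_i]=0$, so $\partial_r^{j}\Omega^{\alpha}$ is a finite combination $\sum\chi_{\alpha'\beta}\,\Omega^{\alpha'}\nabla^{\beta}$ with $|\alpha'|\le|\alpha|$, $|\beta|\le j$ (hence $|\alpha'|+|\beta|\le 2$) and coefficients $\chi_{\alpha'\beta}$ built from $x/|x|$, its derivatives and the constants arising from $[\Omega_i,\partial_j]$ --- all bounded, with bounded derivatives, uniformly in $R$ on that region. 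Thus $\|\partial_r^{j}\Omega^{\alpha}v\|_{L^2}\lesssim\sum_{|\alpha'|+|\beta|\le2}\|\Omega^{\alpha'}\nabla^{\beta}v\|_{L^2}$, giving exactly the stated right-hand side. The only delicate point is the uniform-in-$R$ Sobolev step: one must resist rescaling the whole dyadic annulus to unit radius (that would convert $\partial_r$ into $R\,\partial_r$ and wreck the $R^{-1}$ gain) and instead keep the radial thickness fixed at $2$, so that the sole scaling invoked is the benign degeneration $\Omega\sim r\,\nabla_{\mathrm{tan}}$; the measure conversion and the change of vector fields are then bookkeeping.
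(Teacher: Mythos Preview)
Your argument is correct. The paper itself supplies no proof of this lemma: it merely cites Klainerman \cite{KS3} and places a \qed, so there is nothing to compare against beyond the statement. What you have written is precisely the standard derivation behind Klainerman's weighted Sobolev inequality --- localize to a unit-thickness shell, pass to the fixed product manifold $[0,2]\times S^2$ so that the Sobolev constant is $R$-independent, then recover the $R^{-1}$ from the Jacobian $r^{-2}\sim R^{-2}$ when converting $dr\,d\omega$ to $dx$. Your emphasis that one must \emph{not} rescale the full dyadic annulus to unit size (which would destroy the gain) but only shift the radial variable is exactly the right point; the rotation fields $\Omega$ are homogeneous of degree zero, so they survive the identification untouched. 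The final reduction of $\partial_r^{\,j}\Omega^{\alpha}$ to combinations of $\Omega^{\alpha'}\nabla^{\beta}$ with bounded coefficients is routine, as you say.
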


\begin{lem}\label{Hardy}{\bf (Hardy type inequality)} For any smooth function $v\in C^{\infty}(\mo)$ decaying fast at infinity, then
\begin{equation}\label{hardy}
\|v/r\| \leq 2\|\partial v\|.
\end{equation}
\end{lem}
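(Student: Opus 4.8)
The plan is to deduce this from the classical three–dimensional Hardy inequality, with attention paid to the boundary term generated by $\partial\mo$. Since $v$ is a function of $x$ alone, $\partial v=\nabla v$, so it is enough to prove $\|v/r\|\le 2\|\nabla v\|$. First I would record that, by \eqref{CondK}, one has $r=|x|>3/4$ everywhere on $\mo$, so $v/r$ has no singularity, and the fast decay of $v$ at infinity ensures $v/r\in L^2(\mo)$ as well as the integrability of $v\,(\nabla v\cdot x)/r^2$ needed below.

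The core of the argument is the pointwise identity $\nabla\cdot\bigl(x/r^2\bigr)=1/r^2$ on $\mathbb{R}^3\setminus\{0\}$. Multiplying by $|v|^2$, integrating over $\mo$, and applying the divergence theorem, I would obtain
\begin{equation*}
\int_{\mo}\frac{|v|^2}{r^2}\,\md x=\int_{\partial\mo}|v|^2\,\frac{\boldsymbol{\nu}\cdot x}{r^2}\,\md S-2\int_{\mo}\frac{v\,(\nabla v\cdot x)}{r^2}\,\md x,
\end{equation*}
with no term at spatial infinity surviving thanks to the decay of $v$ (to be fully rigorous one truncates to $\{x\in\mo:|x|<R\}$ and lets $R\to\infty$, the outer spherical contribution being $O(R\max|v|^2)\to0$). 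Using $|\nabla v\cdot x|\le r|\nabla v|$ and Cauchy--Schwarz, the last integral is bounded by $2\|v/r\|\,\|\nabla v\|$, so the whole estimate reduces to showing that the boundary integral over $\partial\mo$ is nonpositive.

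That sign is where the geometry of $\mk$ enters, and it is the only non-mechanical step. Since $\mk=\{x=r\omega:r<b(\omega)\}$ contains the origin, it is star–shaped with respect to $0$; equivalently, writing $F(x)=|x|-b(x/|x|)$ so that $\mo=\{F>0\}$, one has $\nabla F\cdot x=r$ on $\partial\mo$, hence the outer unit normal $\boldsymbol{\nu}=-\nabla F/|\nabla F|$ of $\mo$ satisfies $\boldsymbol{\nu}\cdot x<0$ there. Therefore $\int_{\partial\mo}|v|^2\,(\boldsymbol{\nu}\cdot x)/r^2\,\md S\le 0$ and may be discarded, leaving $\|v/r\|^2\le 2\|v/r\|\,\|\nabla v\|$; dividing by $\|v/r\|$ (the case $v/r\equiv0$ being trivial) yields $\|v/r\|\le 2\|\nabla v\|\le 2\|\partial v\|$. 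The favorable boundary sign is exactly the star-shapedness of $\mk$ about the origin, and it is what lets the sharp exterior-domain constant $2$ be retained with no loss; convexity is not actually needed here, only $0\in\mk$.
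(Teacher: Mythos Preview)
Your proof is correct and is essentially the same argument as the paper's: the paper carries out the identical integration by parts in polar coordinates, writing $\|v/r\|^2=\int_{\mathbb{S}^2}\int_{b(\omega)}^{\infty}v^2\,dr\,d\sigma$ and integrating by parts in $r$, which is precisely your divergence-theorem computation with the vector field $x/r^2$. The boundary term the paper drops, $-\int_{\mathbb{S}^2} b(\omega)\,v^2(b(\omega),\omega)\,d\sigma\le 0$, is exactly your $\int_{\partial\mo}|v|^2\,(\boldsymbol{\nu}\cdot x)/r^2\,dS\le 0$, and your remark that only star-shapedness (not convexity) is needed is accurate.
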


\begin{proof} It follows from \eqref{CondK} that
\begin{equation*}\begin{aligned}
\|v/r\|^2&=\int_{\mathbb{S}^2}\int_{b(\omega)}^{+\infty}v^2 (r, \omega)dr d\sigma\\[2mm]
&=-\int_{\mathbb{S}^2}\int_{b(\omega)}^{+\infty}2r (v\partial_r v)(r, \omega)drd\sigma-\int_{\mathbb{S}^2}b(\omega)v^2(b(\omega), \omega)d\sigma\\[2mm]
&\leq 2\|v/r\| \|\partial v\|,
\end{aligned}
\end{equation*}
which yields \eqref{hardy}.\qquad \qquad \qquad \qquad \qquad \qquad \qquad
\qquad \qquad \qquad \qquad \qquad \qquad \qquad \qquad $\square$\end{proof}

\begin{lem}\label{Elliptic}{\bf (Elliptic regularity estimate)} Suppose that $\psi\in C^{\infty}(\mo)$ satisfies $\partial_{\boldsymbol{\nu}}\psi=0$ on $\partial\mo$. Then one has that for any $R\geq 2$ and any nonnegative integer $k$,
\begin{equation}\label{interior}
\|\nabla^2\psi\|_{H^{k}(R\leq |x|\leq R+1)}\leq C(k)(\|\nabla\psi\|_{L^2(R-1\leq |x|\leq R+2)}
+\|\Delta\psi\|_{H^{k}(R-1\leq |x|\leq R+2)}).
\end{equation}
In addition, the following elliptic estimate near boundary $\partial\mo$ holds
\begin{equation}\label{boundary}
\|\nabla^2\psi\|_{H^{k}(|x|\leq 3)}\leq C(k)(\|\nabla\psi\|_{L^2(|x|\leq 4)}
+\|\Delta\psi\|_{H^{k}(|x|\leq 4)}).
\end{equation}
\end{lem}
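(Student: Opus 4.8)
The plan is to reduce \eqref{interior}–\eqref{boundary} to the standard interior and boundary regularity theory for the Laplacian, the only subtlety being the Neumann condition on $\partial\mo$ in the second estimate. For \eqref{interior}, since $R\geq 2$ and $\mk$ is contained in $\{|x|\leq 1\}$, the annulus $\{R-1\leq|x|\leq R+2\}$ lies entirely in the interior of $\mo$ and is at definite distance from $\partial\mo$. So there is no boundary to worry about: I would pick a cutoff $\chi\in C_0^\infty(\mathbb R^3)$ with $\chi\equiv 1$ on $\{R\leq|x|\leq R+1\}$, $\operatorname{supp}\chi\subset\{R-1/2\leq|x|\leq R+3/2\}$, and apply the classical interior estimate $\|\nabla^2(\chi\psi)\|_{H^k}\lesssim\|\Delta(\chi\psi)\|_{H^k}+\|\chi\psi\|_{H^k}$ on a fixed annulus. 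Expanding $\Delta(\chi\psi)=\chi\Delta\psi+2\nabla\chi\cdot\nabla\psi+(\Delta\chi)\psi$ introduces a $\|\psi\|_{H^{k+1}}$ term on the support of $\nabla\chi$, but that is controlled by $\|\nabla\psi\|_{L^2}+\|\Delta\psi\|_{H^k}$ by iterating the estimate (shrinking annuli at each step), plus one application of the Hardy-type inequality Lemma \ref{Hardy} to absorb the lowest-order $\|\psi\|_{L^2}$ piece in terms of $\|\nabla\psi\|$; note the constant $C(k)$ is uniform in $R$ because all the annuli have fixed width and one can translate/rescale, or simply use that the relevant elliptic constant depends only on the geometry of a unit-width annulus.

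For \eqref{boundary} the region $\{|x|\leq 3\}$ touches $\partial\mo$, so I would use the boundary version of elliptic regularity for the \emph{Neumann} problem. Here $\partial\mo$ is smooth and compact (it is the graph $r=b(\omega)$ with $b$ smooth), so one has the a priori estimate: if $\partial_{\boldsymbol\nu}\psi=0$ on $\partial\mo$, then on a bounded smooth domain $D$ with $\overline D\cap\partial\mo$ smooth, $\|\nabla^2\psi\|_{H^k(D')}\lesssim\|\Delta\psi\|_{H^k(D)}+\|\psi\|_{H^{k+1}(D)}$ for $D'\Subset D$ (away from the artificial part of $\partial D$), with the constant depending on $k$ and on $\partial\mo$. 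Again cut off with $\chi\equiv 1$ on $\{|x|\leq 3\}$ and $\operatorname{supp}\chi\subset\{|x|\leq 7/2\}$; the multiplication by $\chi$ preserves the homogeneous Neumann condition because $\chi$ is radial and $\partial\mo$ is star-shaped about the origin — more to the point, $\nabla\chi$ vanishes near $\partial\mo$, so $\partial_{\boldsymbol\nu}(\chi\psi)=\chi\,\partial_{\boldsymbol\nu}\psi=0$ on $\partial\mo$ — and then the Neumann a priori estimate applies to $\chi\psi$ on $\{|x|\leq 4\}\cap\mo$. The commutator terms again produce $\|\psi\|_{H^{k+1}(|x|\leq 4)}$, handled by the same iteration as before, and the bottom-order term by Lemma \ref{Hardy} to convert $\|\psi\|$ into $\|\nabla\psi\|$ (here using that $\psi$ — and hence $\chi\psi$ — decays at infinity, though we only need the estimate on a bounded set).

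The main obstacle is the Neumann boundary a priori estimate with the $H^k$ right-hand side and its iteration into the claimed form with only $\|\nabla\psi\|_{L^2}$ (not $\|\psi\|_{H^{k+1}}$) on the right. The point is that I cannot feed $\|\psi\|_{H^{k+1}}$ back naively — I must run a finite induction on $k$: the base case $k=0$ is the standard $H^2$ Neumann estimate giving $\|\nabla^2\psi\|_{L^2}\lesssim\|\Delta\psi\|_{L^2}+\|\psi\|_{H^1}$, and then $\|\psi\|_{H^1}\lesssim\|\nabla\psi\|_{L^2}$ on a bounded region after invoking Lemma \ref{Hardy}; for the inductive step one differentiates, but tangential derivatives preserve the Neumann condition while normal derivatives do not, so one uses the equation $\Delta\psi\in H^k$ to recover the normal second derivative from tangential ones plus lower order — this is the classical "difference quotients tangential to $\partial\mo$" argument, and it goes through precisely because $\partial\mo$ is smooth. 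Once that local Neumann estimate is in hand, the cutoff bookkeeping and the reduction of all lower-order terms to $\|\nabla\psi\|_{L^2}+\|\Delta\psi\|_{H^k}$ is routine, and $R$-uniformity in \eqref{interior} is immediate since that case involves no boundary at all.
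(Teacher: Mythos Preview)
Your overall plan --- standard interior regularity for \eqref{interior}, Neumann boundary regularity for \eqref{boundary}, with cutoffs and an induction on $k$ --- is the right strategy and matches the paper's approach in outline. The gap is your use of Lemma~\ref{Hardy} to dispose of the lowest-order $\|\psi\|_{L^2}$ term. Hardy's inequality as stated requires $\psi$ to decay at infinity, which the lemma does not assume, and even granting decay it would yield $\|\nabla\psi\|_{L^2(\mo)}$ on the right, not the local quantity $\|\nabla\psi\|_{L^2(|x|\leq 4)}$ or $\|\nabla\psi\|_{L^2(R-1\leq|x|\leq R+2)}$ that the statement demands. Applying Hardy instead to the cut-off function $\chi\psi$ is circular: $\|\nabla(\chi\psi)\|$ contains $\|(\nabla\chi)\psi\|$, which is the very term you are trying to control.

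The paper's fix for \eqref{boundary} is cleaner and avoids Hardy entirely: since $\nabla^2\psi$ and $\Delta\psi$ are unchanged under $\psi\mapsto\psi-c$ for any constant $c$, one sets $c=\bar\psi$ equal to the mean of $\psi$ over $\{|x|\leq 4\}$, applies the Neumann boundary estimate (Agmon--Douglis--Nirenberg) to $\psi-\bar\psi$, and then uses the Poincar\'e inequality on the bounded domain $\{|x|\leq 4\}$ to get $\|\psi-\bar\psi\|_{L^2(|x|\leq 4)}\lesssim\|\nabla\psi\|_{L^2(|x|\leq 4)}$. This is purely local and needs no decay hypothesis. For \eqref{interior} the same subtract-the-mean idea runs into a subtlety (the Poincar\'e constant on an annulus of inner radius $R$ grows like $R$), so it is better to use the form of the interior $L^2$ estimate that comes directly from integrating $\int\chi^2|\nabla^2\psi|^2$ by parts: that computation produces only $\|\nabla\psi\|$ and $\|\Delta\psi\|$ on the right, with constants depending only on $\|\nabla\chi\|_\infty$ and hence uniform in $R$; the higher-$k$ cases then follow by applying this to $\partial^\alpha\psi$ on nested annuli.
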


\begin{proof} At first, we point out that estimate \eqref{interior} does not depend on
the boundary condition of $\psi$ and it just comes from the standard elliptic interior regularity
estimate (see Chapter 6 of \cite{GT}).

As for the estimate \eqref{boundary}, we define $\bar\psi=\ds\frac{1}{|(|x|\leq 4)|}\int_{(|x|\leq 4)}\psi dx$.
Then it follows from the elliptic regularity estimates near the boundary for Neumann boundary conditions (see Theorem 15.2 in \cite{AND})
and Poincare inequality that
\begin{equation*}\begin{aligned}
\|\nabla^2\psi\|_{H^{k}(|x|\leq 3)}&=\|\nabla^2(\psi-\bar\psi)\|_{H^{k}(|x|\leq 3)}\\[2mm]
&\leq C(k)\left(\|\psi-\bar\psi\|_{L^2(|x|\leq 4)}+\|\Delta(\psi-\bar\psi)\|_{H^{k}(|x|\leq 4)}\right)\\[2mm]
&\leq C(k)\left(\|\nabla\psi\|_{L^2(|x|\leq 4)}+\|\Delta \psi\|_{H^{k}(|x|\leq 4)}\right),
\end{aligned}
\end{equation*}
which derives \eqref{boundary}.\qquad \qquad \qquad \qquad \qquad \qquad \qquad \qquad
\qquad \qquad \qquad \qquad \qquad \qquad $\square$\end{proof}

Next we consider the following linear Neumann-wave equation problem
\begin{equation}\label{NW}\begin{cases}
\Box v=F, \ \ (t, x)\in \mathbb{R}_{+}\times\mo,\\[2mm]
\partial_{\boldsymbol{\nu}}v=0,\ \ (t, x)\in\mathbb{R}_{+}\times\partial\mo,\\[2mm]
v(t, x)\equiv 0,\ \ t\leq 0.
\end{cases}
\end{equation}
Then we have that

\begin{lem}\label{Conclusion} For the smooth solution of  problem \eqref{NW}, one has that for any nonnegative  integer $k$,
\begin{equation}\label{nw1}
\sum\limits_{0\leq |\alpha|\leq k}\|\partial^{\alpha}\partial v(t, \cdot)\|\leq C(k)
\left(\sum\limits_{0\leq j\leq k}\int_0^{t}\|\partial_0^j F(s, \cdot)\|ds
+\sum\limits_{0\leq |\alpha|\leq k-1}\|\partial^{\alpha}  F(t, \cdot)\|\right).
\end{equation}
\end{lem}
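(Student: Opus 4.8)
The plan is to reduce everything to the basic $L^2$ energy identity for the Neumann--wave operator and then to trade spatial derivatives for time derivatives by means of the equation itself together with the elliptic regularity estimates of Lemma~\ref{Elliptic}. Throughout, write $s(\alpha)$ for the number of spatial entries of a space--time multi-index $\alpha$.

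The first step is the basic energy estimate: if $w$ solves $\Box w=G$ on $\R_{+}\times\mo$ with $\partial_{\boldsymbol{\nu}}w=0$ on $\R_{+}\times\partial\mo$ and $w\equiv 0$ for $t\le 0$, then multiplying by $\partial_0 w$ and integrating over $\mo$ produces the single boundary contribution $\int_{\partial\mo}(\partial_{\boldsymbol{\nu}}w)\,\partial_0 w\,d\sigma$, which vanishes by the Neumann condition (the term at spatial infinity being absent by the decay of $w$, e.g. via finite propagation speed when $G$ is compactly supported in $x$). This gives $\frac{d}{dt}\|\partial w(t,\cdot)\|^{2}\le 2\|G(t,\cdot)\|\,\|\partial w(t,\cdot)\|$, hence $\|\partial w(t,\cdot)\|\le\int_0^{t}\|G(s,\cdot)\|\,ds$. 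I would then apply this to $w=\partial_0^{a}v$: it still satisfies $\partial_{\boldsymbol{\nu}}w=0$ because $\partial\mo$ is time--independent, so $\partial_0$ commutes with $\partial_{\boldsymbol{\nu}}=\boldsymbol{\nu}^{i}\partial_i$, and it still vanishes for $t\le 0$; with $G=\partial_0^{a}F$ this yields $\|\partial\partial_0^{a}v(t,\cdot)\|\le\int_0^{t}\|\partial_0^{a}F(s,\cdot)\|\,ds$ for every $a$. This already bounds the pure--time part of the left side of \eqref{nw1}, and in fact every derivative of $v$ of order $\le k+1$ carrying at most one spatial entry, by the first sum on the right of \eqref{nw1}.

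Next I would establish, by induction on $b\ge 0$, the claim
\[
\sum_{\substack{1\le|\alpha|\le k+1\\ s(\alpha)\le b}}\|\partial^{\alpha}v(t,\cdot)\|\ \lesssim\ \sum_{0\le j\le k}\int_0^{t}\|\partial_0^{j}F(s,\cdot)\|\,ds+\sum_{0\le|\beta|\le k-1}\|\partial^{\beta}F(t,\cdot)\|,
\]
whose case $b=k+1$ is \eqref{nw1}. The cases $b=0,1$ are exactly the previous paragraph (the relevant time orders being $\le k$, so the produced terms lie in the first sum). For the inductive step, assume the claim for all values $\le b$ (with $b\ge 1$) and fix $\alpha$ with $a$ time entries and $b+1\ge 2$ spatial entries, $a+b+1\le k+1$. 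Put $\psi=\partial_0^{a}v(t,\cdot)$, so $\partial^{\alpha}v=\nabla_x^{\,b-1}\nabla^2\psi$ and $\|\partial^{\alpha}v(t,\cdot)\|\le\|\nabla^2\psi\|_{H^{b-1}(\mo)}$. Covering $\mo$ by $\{|x|\le 3\}$ together with the finitely overlapping annuli $\{R<|x|<R+1\}$, $R\ge 3$, and applying \eqref{boundary} and (the summed form of) \eqref{interior} to $\psi$ --- which is permissible precisely because $\psi=\partial_0^{a}v$ satisfies $\partial_{\boldsymbol{\nu}}\psi=0$ --- one gets $\|\nabla^2\psi\|_{H^{b-1}(\mo)}\lesssim\|\nabla\psi\|+\|\Delta\psi\|_{H^{b-1}(\mo)}$. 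Since $\Delta v=\partial_0^2 v-F$ gives $\Delta\psi=\partial_0^{a+2}v-\partial_0^{a}F$, the right side is controlled by $\|\nabla_x\partial_0^{a}v(t,\cdot)\|$, by $\|\partial_0^{a+2}v(t,\cdot)\|_{H^{b-1}(\mo)}$ and by $\|\partial_0^{a}F(t,\cdot)\|_{H^{b-1}(\mo)}$. The first is $\le\int_0^{t}\|\partial_0^{a}F\|\,ds$ with $a\le k-1$; the last is a sum of $\|\partial^{\beta}F(t,\cdot)\|$ with $|\beta|\le a+b-1\le k-1$; the middle one is a sum of $\|\partial^{\alpha''}v(t,\cdot)\|$ with $|\alpha''|\le a+b+1\le k+1$ and $s(\alpha'')\le b-1<b$, hence already bounded by the induction hypothesis. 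Combining with the induction hypothesis for the terms with $s(\alpha)\le b$ closes the induction.

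The step I expect to be the main obstacle is the boundary: one cannot feed a spatial --- even a tangential --- derivative of $v$ into the Neumann elliptic estimate, since $\partial_{\boldsymbol{\nu}}$ of such a derivative need not vanish. The construction above circumvents this by only ever applying Lemma~\ref{Elliptic} to the pure time derivatives $\partial_0^{a}v$, and by peeling spatial derivatives off two at a time as a full Laplacian, re-entering the induction through $\Delta(\partial_0^{a}v)=\partial_0^{a+2}v-\partial_0^{a}F$, which lowers the spatial count by two while raising the time count by two and so keeps the total order $\le k+1$. The remaining points --- the finite-overlap patching of the localized estimates of Lemma~\ref{Elliptic} into a global one on $\mo$, the commutation of $\partial_0$ with $\partial_{\boldsymbol{\nu}}$, and the verification that every $F$-term produced along the way lands in $\sum_{j\le k}\int_0^{t}\|\partial_0^{j}F\|\,ds+\sum_{|\beta|\le k-1}\|\partial^{\beta}F(t,\cdot)\|$ --- are routine.
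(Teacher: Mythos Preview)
Your proposal is correct and follows essentially the same route as the paper's proof: both apply the basic energy estimate to the pure time derivatives $\partial_0^{j}v$ (which preserve the Neumann condition), then invoke the elliptic regularity of Lemma~\ref{Elliptic} for $\psi=\partial_0^{j}v$ and use $\Delta(\partial_0^{j}v)=\partial_0^{j+2}v-\partial_0^{j}F$ to trade spatial for time derivatives, closing by induction. Your write-up is more explicit than the paper about the patching of the local elliptic estimates into a global one on $\mo$ and about the induction parameter (you induct on the number $b$ of spatial entries, the paper on $j$), but these are cosmetic differences; the argument is the same.
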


\begin{proof} For any nonnegative integer $j$, by $\partial_{\boldsymbol{\nu}}\partial_0^j v=0$ on $\mathbb{R}_{+}\times\partial\mo$
and the standard energy estimate (see Lemma \ref{Timenl}), we get
\begin{equation}\label{see}
\|\partial \partial_0^j v(t, \cdot)\|\lesssim \int_0^{t}\|\partial_0^{j}F(s, \cdot)\|ds.
\end{equation}
In addition, by Lemma \ref{Elliptic}, one has that for $0\le j\le k-1$,
\begin{equation*}\label{see1}
\|\partial^2\partial_0^j v(t, \cdot)\|_{k-j-1}\lesssim \|\partial \partial_0^{j}v(t, \cdot)\|
+\|\partial_0^{j}\Box v(t, \cdot)\|_{k-j-1}+\|\partial_0^{j+2}v(t, \cdot)\|_{k-j-1}.
\end{equation*}
Then by induction argument on $j$ from $0$ to $k-1$, we have
\begin{equation}\label{see2}
\sum\limits_{0\leq |\alpha|\leq k}\|\partial^{\alpha}\partial v(t, \cdot)\|\leq C(k)\left(\sum\limits_{0\leq j\leq k}\|\partial\partial_0^{j} v(t, \cdot)\|+\sum\limits_{0\leq |\alpha|\leq k-1}\|\partial^{\alpha} F(t, \cdot)\|\right).
\end{equation}
Thus, \eqref{nw1} is obtained from \eqref{see} and \eqref{see2} by induction argument on $j$ from $0$ to $k-1$.
\qquad $\square$\end{proof}

\begin{lem}\label{Exponetial} {\bf (Exponential-type decay estimate)}
For problem \eqref{NW}, suppose that assumption
\eqref{CondK} holds, and $F$ is smooth and supported in $\{|x|\leq R\}$ for some positive $R>1$.
Then there exist two positive constants $\kappa$ and $\c<1$ depending on $R$ and the geometry of $\partial\mo$
such that for the smooth solution $v$ of \eqref{NW},
\begin{equation}\label{exponential}
\|\partial v(t, \cdot)\|_{L^2(|x|\leq 5)}\leq \kappa \int_0^{t}e^{-\c(t-s)}\|F(s, \cdot)\|ds,
\end{equation}
and
\begin{equation}\label{exponential1}
\|v(t, \cdot)\|_{L^2(|x|\leq 4)}\leq \kappa\sum\limits_{0\leq j\leq 1}\int_0^{t}e^{-\c(t-s)}\|\partial_0^j F(s, \cdot)\|ds.
\end{equation}
\end{lem}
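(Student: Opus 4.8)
The plan is to combine the classical Morawetz-type local energy decay for the Neumann-wave equation with obstacle $\mk$ with the Duhamel principle, together with the first-derivative energy estimate of Lemma \ref{Timenl} and the elliptic regularity estimate of Lemma \ref{Elliptic}. First I would record the statement one needs from \cite{MCS}: for the homogeneous Neumann-wave equation $\Box w=0$ outside the convex obstacle, with initial data $(f,g)$ supported in $\{|x|\le R\}$, one has the uniform local energy decay $\|\partial w(t,\cdot)\|_{L^2(|x|\le \rho)}\le \kappa_0 e^{-\c t}\big(\|\nabla f\|+\|g\|\big)$ for some $\c=\c(R,\rho,\partial\mo)<1$; this is precisely Morawetz's exponential decay for star-shaped (in particular convex) obstacles in odd space dimension $n=3$. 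Since $F$ is supported in $\{|x|\le R\}$ with $R>1$ fixed, finite propagation speed guarantees that, for the Duhamel representation $v(t,\cdot)=\int_0^t w_s(t,\cdot)\,ds$ where $w_s$ solves $\Box w_s=0$ with Cauchy data $(0,F(s,\cdot))$ prescribed at time $s$, each profile $w_s$ has data supported in $\{|x|\le R\}$, so the cited local-energy-decay estimate applies uniformly in $s$.

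For \eqref{exponential} I would then estimate, using Duhamel and the local energy decay applied on the ball $\{|x|\le 5\}$,
\begin{equation*}
\|\partial v(t,\cdot)\|_{L^2(|x|\le 5)}\le \int_0^t \|\partial w_s(t,\cdot)\|_{L^2(|x|\le 5)}\,ds
\le \kappa_0\int_0^t e^{-\c(t-s)}\|F(s,\cdot)\|\,ds,
\end{equation*}
which is exactly \eqref{exponential} after renaming constants; note that only the $L^2$ norm of the ``data'' $F(s,\cdot)$ enters, since that data has no gradient part. The decay rate $\c<1$ can be arranged by shrinking it if necessary.

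The genuinely new point is \eqref{exponential1}, the decay of $v$ itself rather than of $\partial v$, and this is where the argument of Remark \ref{DIRICHLET} must be implemented: unlike the Dirichlet case there is no boundary trace of $v$ to exploit. My plan is to recover the local $L^2$ norm of $v$ from the local $L^2$ norm of $\nabla v$ via elliptic regularity combined with the equation. Concretely, since $\Delta v=\partial_0^2 v-F$ and $\partial_{\boldsymbol\nu}v=0$ on $\partial\mo$, Lemma \ref{Elliptic} (the near-boundary estimate \eqref{boundary}) lets one control $\|\nabla^2 v\|_{L^2(|x|\le 3)}$ by $\|\nabla v\|_{L^2(|x|\le 4)}+\|\partial_0^2 v\|_{L^2(|x|\le 4)}+\|F\|_{L^2(|x|\le 4)}$. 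To pin down $v$ itself one needs to kill the additive constant ambiguity inherent in a pure-Neumann problem: here one uses that $v\equiv 0$ for $t\le 0$, so $v(t,x)=\int_0^t \partial_0 v(\tau,x)\,d\tau$, giving $\|v(t,\cdot)\|_{L^2(|x|\le 4)}\le \int_0^t\|\partial_0 v(\tau,\cdot)\|_{L^2(|x|\le 4)}\,d\tau$. Applying \eqref{exponential} (on a slightly larger ball, say $\{|x|\le 5\}$, which dominates $\{|x|\le 4\}$) to $\partial_0 v$—noting that $\partial_0 v$ solves the same Neumann-wave problem with right-hand side $\partial_0 F$, still supported in $\{|x|\le R\}$—yields $\|\partial_0 v(\tau,\cdot)\|_{L^2(|x|\le 4)}\le \kappa\int_0^\tau e^{-\c(\tau-s)}\|\partial_0 F(s,\cdot)\|\,ds$, and similarly controlling $v$ directly by applying \eqref{exponential} to $v$ itself (whose RHS is $F$) after one integration produces the $j=0$ term. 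Combining, after a Young-type convolution bound and absorbing the extra $\tau$-integration into a (possibly slightly smaller) exponent $\c$, gives
\begin{equation*}
\|v(t,\cdot)\|_{L^2(|x|\le 4)}\le \kappa\sum_{0\le j\le 1}\int_0^t e^{-\c(t-s)}\|\partial_0^j F(s,\cdot)\|\,ds,
\end{equation*}
which is \eqref{exponential1}. I expect the main obstacle to be the second step: making precise the passage from decay of $\partial v$ to decay of $v$ without a Dirichlet trace, in particular handling the Neumann constant-mode ambiguity cleanly (this is exactly why the hypothesis $v\equiv 0$ for $t\le 0$ and the elliptic estimate \eqref{boundary} of Lemma \ref{Elliptic} are indispensable), and keeping track that the double time-integration does not destroy the exponential rate, which is remedied by a standard $\int_0^t e^{-\c(t-\tau)}\,d\tau\lesssim 1$ bound at the cost of enlarging $\kappa$.
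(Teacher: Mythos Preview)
Your treatment of \eqref{exponential} is correct and matches the paper: Morawetz's local energy decay for the homogeneous Neumann problem outside a convex obstacle, combined with Duhamel, gives the claimed bound directly.

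The argument for \eqref{exponential1}, however, has a genuine gap. You write $v(t,x)=\int_0^t\partial_0 v(\tau,x)\,d\tau$, bound $\|\partial_0 v(\tau,\cdot)\|_{L^2(|x|\le 4)}$ by $\kappa\int_0^\tau e^{-\c(\tau-s)}\|F(s,\cdot)\|\,ds$ via \eqref{exponential}, and then assert that the extra $\tau$-integration can be ``absorbed into a possibly slightly smaller exponent $\c$''. This is false: carrying out the double integral gives
\[
\int_0^t\!\!\int_0^\tau e^{-\c(\tau-s)}\|F(s,\cdot)\|\,ds\,d\tau
=\int_0^t\|F(s,\cdot)\|\,\frac{1-e^{-\c(t-s)}}{\c}\,ds
\le \frac{1}{\c}\int_0^t\|F(s,\cdot)\|\,ds,
\]
which carries no exponential weight in $t-s$ at all. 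The bound $\int_0^t e^{-\c(t-\tau)}\,d\tau\lesssim 1$ that you invoke is exactly what \emph{removes} the decay, not what preserves it. Concretely, if $F$ is supported in $[0,T_0]$ in time, \eqref{exponential1} asserts $\|v(t,\cdot)\|_{L^2(|x|\le 4)}\lesssim e^{-\c(t-T_0)}$, whereas your argument only yields a bound of order one. The elliptic estimate \eqref{boundary} you mention does not help here, since it controls $\nabla^2 v$ from $\nabla v$ and $\Delta v$, not $v$ from $\nabla v$.

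The paper overcomes this by a different mechanism. After obtaining \eqref{exponential} for $\partial v$ and $\partial\partial_0 v$, it straightens $\partial\mo$ to the unit sphere by an explicit diffeomorphism, takes the spherical average $V(t,r)=|\mathbb{S}^2|^{-1}\int_{\mathbb{S}^2}v\,d\sigma$, and observes that $V$ solves a one-dimensional radial wave equation with Neumann data at $r=1$. Solving this by characteristics yields an explicit formula for the boundary trace $V(t,1)=V_0(t)$ carrying a built-in factor $e^{-t}$; this is where the exponential rate for the ``zero mode'' actually comes from. One then writes $\|v\|_{L^2(|x|\le 4)}\lesssim\|v-V(t,1)\|_{L^2(1\le|y|\le 4)}+|V(t,1)|$, bounds the first term by $\|\partial v\|_{L^2}$ via Poincar\'e (the subtraction of $V(t,1)$ kills the constant mode), and bounds the second term by the explicit formula. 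The missing idea in your proposal is precisely this isolation and explicit computation of the spherical-mean boundary trace; without it, there is no way to see that the Neumann ``constant mode'' decays exponentially rather than merely remaining bounded.
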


\begin{proof} Note that estimate \eqref{exponential}
directly comes from THE MAIN THEOREM of \cite{MCS} and the Duhamel principle.

In addition, by $\partial_{\boldsymbol{\nu}}\partial_0 v=0$ on $\mathbb{R}_{+}\times\partial\mo$,
it follows from \eqref{exponential} with $\partial_0 v$ that
\begin{equation*}\label{decayt}
\sum\limits_{0\leq j\leq 1}\|\partial\partial_0^j v(t, \cdot)\|_{L^2(|x|\leq 5)}\lesssim\sum\limits_{0\leq j\leq 1}\int_0^{t}e^{-\c (t-s)}\|\partial_0^j F(s, \cdot)\|ds.
\end{equation*}
Combining this with \eqref{see2} in Lemma \ref{Conclusion}  for $k=1$ that
\begin{equation}\label{decayt1}
\sum\limits_{0\leq |\alpha|\leq 1}\|\partial^{\alpha}\partial v(t, \cdot)\|\lesssim\left(\sum\limits_{0\leq j\leq 1}\int_0^{t}e^{-\c(t-s)}\|\partial_0^j F(s, \cdot)\|ds+\|F(t, \cdot)\|\right).
\end{equation}
Based on \eqref{decayt1}, we now derive \eqref{exponential1}. For this purpose,
at first we take a coordinate transformation such that $\partial\mo$ is changed
into $\mathbb{S}^2$. Let
\begin{equation}\label{cr}
y=\frac{x}{(1-\varrho(x/2))b(\omega)+\varrho(x/2)}.
\end{equation}
Obviously, one has
\begin{equation}\label{Inverse}\begin{aligned}
&\partial_r\left(\frac{r}{(1-\varrho(r/2))b(\omega)+\varrho(r/2)}\right)\\[2mm]
=&\frac{b(\omega)+(1-b(\omega))\varrho(r/2)-1/2(1-b(\omega))\varrho'(r/2)}{((1-\varrho(r/2))b(\omega)+\varrho(r/2))^2}\\[2mm]
>&0,
\end{aligned}
\end{equation}
where the last inequality comes from the assumption of $b(\omega)$ in \eqref{CondK} and the definition of $\varrho$ in the end of Sect.1.\footnote{If $b(\omega)$ in \eqref{CondK} is defined to fulfill $K<b(\omega)<1$ for some positive constant $K$, then \eqref{Inverse} also stands if we select the function $\varrho$ such that  $|\varrho'(r)|\leq \frac{2K}{1-K}$.}

With \eqref{Inverse} and the definition of $\varrho$, the transformation \eqref{cr} is a diffeomorphism from $\mo$ to $\mathbb{R}^3\backslash B_1(0)$. In addition, it is easy to know that the Jacobian matrices $\partial y/\partial x$ and $\partial x/\partial y$
satisfy for any nonnegative integer $k$,
\begin{equation}\label{cr1}
\|\partial y/\partial x-I_3\|_{C_0^k(|x|\leq 3)}+\|\partial x/\partial y-I_3\|_{C_0^k(1\leq |y|\leq 3)}\leq C(k),\
\end{equation}
where $I_3$ represents the $3\times 3$ unit matrix.
Under transformation \eqref{cr}, with the help of \eqref{cr1}, problem \eqref{NW} is converted into
\begin{equation}\label{NW1}\begin{cases}
\Box  v=F+\chi_{(|y|\leq 3)}(y)L_1(\partial v, \partial^2 v),\qquad (t, y)\in \mathbb{R}_{+}\times \mathbb{R}^3\backslash B_1(0),\\[2mm]
\partial_r v=L_2(\partial v),\qquad (t, y)\in\mathbb{R}_{+}\times\partial B_1(0),\\[2mm]
v(t, y)\equiv 0,\qquad t<0,
\end{cases}
\end{equation}
where $\chi_{(|y|\leq 3)}$ is the characteristic function of domain $\{y: 1\leq |y|\leq 3\}$, $L_1(\partial v, \partial^2 v)$
and $L_2(\partial v)$ are linear with respect to their arguments, and satisfies
$$|L_1(\partial v, \partial^2 v)|\lesssim |\partial v|+|\partial^2 v|\quad\text{and}\quad
|L_2(\partial v)|\lesssim |\partial v|.$$
Set $\ds V=\int_{\mathbb{S}^2}v d\sigma/|\mathbb{S}^2|$, one has from \eqref{NW1} that
\begin{equation}\label{NW2}\begin{cases}
\partial_t^2 V-\partial_r^2 V-\frac{2}{r}\partial_r V=F_1,\qquad (t, r)\in\mathbb{R}_{+}\times (1, +\infty),\\[2mm]
\partial_r V=F_2,\qquad (t, r)\in\mathbb{R}_{+}\times\{1\},\\[2mm]
V(t, r)\equiv 0,\qquad t<0,
\end{cases}
\end{equation}
where $\ds F_1=\int_{\mathbb{S}^2}\Box v d\sigma/|\mathbb{S}^2|$ and $\ds F_2=\int_{\mathbb{S}^2}\partial_r v d\sigma/|\mathbb{S}^2|$.
It follows from \eqref{NW2} and the characteristics method that $V$ admits the following explicit expression
\begin{equation}\label{EE}
V(t, r)=\frac{1}{r}V_0(t-r+1)+\frac{1}{r}\int_1^{r}\left[\int_{r'}^{t-r+2r'}sF_1(t-r+2r'-s, s)ds\right]dr',
\end{equation}
where $V_0(p)$ is expressed as
\begin{equation}\label{EE1}
V_0(p)=e^{-p}\int_0^{p}e^{l}\left[\int_{1}^{l+1}sF_1(l+1-s, s)ds-F_2(l,1)\right]dl.
\end{equation}
By \eqref{EE}, Poincare inequality and \eqref{cr}-\eqref{cr1}, we arrive at
\begin{equation}\label{EE2}\begin{aligned}
\|v\|_{L^2(|x|\leq 4)}&\lesssim \|v-V(t,1)\|_{L^2(1\leq |y|\leq 4)}+|V(t, 1)|\\[2mm]
&\lesssim \|\partial v\|_{L^2(1\leq |y|\leq 4)}+|V_0(t)|\\[2mm]
&\lesssim\|\partial v\|_{L^2(|x|\leq 4)}+\int_0^{t}e^{-(t-s)}\|F_1(s, \cdot)\|ds+e^{-t}\int_0^{t}|F_2(l, 1)|dl.
\end{aligned}
\end{equation}
With the definitions of $F_i\ (i=1,2)$ in \eqref{NW1} and \eqref{NW2}, it derives from \eqref{EE1}-\eqref{EE2} and Sobolev trace theorem that
\begin{equation*}\label{EE3}\begin{aligned}
\|v\|_{L^2(|x|\leq 4)}&\lesssim \|\partial v\|_{L^2(|x|\leq 4)}+\int_0^{t}e^{-(t-s)}\|F(s, \cdot)\|ds\\[2mm]
&+\int_0^{t}e^{-(t-s)}\|(\partial v, \partial^2 v)(s, \cdot)\|_{L^2(|x|\leq 4)}ds+e^{-t}\int_0^{t}\|(\partial v, \partial^2 v)(s, \cdot)\|_{L^2(|x|\leq 4)}ds.
\end{aligned}
\end{equation*}
Combining this with \eqref{decayt1} yields \eqref{exponential1}.\qquad
\qquad \qquad \qquad \qquad \qquad \qquad \qquad \qquad \qquad $\square$
\end{proof}

\begin{rem}\label{Ipdecay} Under the assumptions in Lemma \ref{Exponetial}, similar to \eqref{decayt1},
estimate \eqref{nw1} can be improved as
\begin{equation*}
\sum\limits_{0\leq |\alpha|\leq k}\|\partial^{\alpha}\partial v(t, \cdot)\|\leq C(k)\left(\sum\limits_{0\leq j\leq k}\int_0^{t}e^{-\c(t-s)}\|\partial_0^j F(s, \cdot)\|ds+\sum\limits_{0\leq |\alpha|\leq k-1}\|\partial^{\alpha}F(t, \cdot)\|\right).
\end{equation*}
\end{rem}

Next two lemmas show the relations between null conditions in \eqref{Null2} and the partial
Klainerman's vector fields $\Gamma=\{Z, L\}$, which can be found in \cite{KSS4} and \cite{MC2}.

\begin{lem}\label{Null}{\bf (Estimates under null conditions)} Suppose that the null conditions \eqref{Null2} hold. Then
\begin{equation*}\label{null2}
|\mathcal{S}^{\alpha\beta}\partial_{\alpha}w\partial_{\beta}v|\lesssim \left<r\right>^{-1}(|\Gamma w| |\partial v|+|\partial w| |\Gamma v|)
\end{equation*}
and
\begin{equation*}\label{null1}
|\mathcal{Q}_{\mu}^{\alpha\beta}\partial_{\mu}w\partial_{\alpha\beta}^2 v|\lesssim \left<r\right>^{-1}(|\Gamma w||\partial^2 v|+|\partial w| |\partial \Gamma v|)+\frac{\left<t-r\right>}{\left<t+r\right>}|\partial w| |\partial^2 v|.
\end{equation*}
\qed
\end{lem}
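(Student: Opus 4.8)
The plan is to prove the two pointwise bounds by exploiting the algebraic structure of null forms in essentially the same way as the classical treatment of the Cauchy problem, but with the hyperbolic rotations $L_i$ replaced by the single scaling field $L=t\partial_0+x_i\partial_i$ together with the rotations $\Omega$ and the translations $\partial_\alpha$. The key observation is that a null form vanishes when the frequency covector is proportional to the null covector $\omega=(-1,\hat x/|x|)$, so after decomposing every derivative into its component along the null direction and its tangential components, at least one factor in each term must carry a ``good'' derivative. The good derivatives are exactly those that, when applied to a function, gain the factor $\langle r\rangle^{-1}$ at the expense of hitting the function with one of the vector fields $\Gamma=\{Z,L\}$; this is the content of the standard identities $\partial_\alpha - \omega_\alpha \partial_r$ and $(t-r)\partial_\alpha$ expressible through $\Omega$, $L_i$, and $L$.

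First I would set up the frame: fix $x\neq 0$, write $\omega_0=-1$, $\omega_i=x_i/r$, and introduce the tangential vector fields. The basic commutator-type identities I would record are, schematically,
\[
\partial_\alpha = -\tfrac{\omega_\alpha}{2}\,\underline{L}\;+\;(\text{terms of the form }\tfrac{1}{\langle r\rangle}\Omega)\;+\;\tfrac{\langle t-r\rangle}{\langle t+r\rangle}\,\partial,
\]
where $\underline{L}$ denotes the ``bad'' null derivative $\partial_t+\partial_r$ (or its analog) and the remaining pieces are controlled by $\langle r\rangle^{-1}\Gamma$ plus an error weighted by $\langle t-r\rangle/\langle t+r\rangle$. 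Crucially, using the modified scaling $\mathcal L=t\partial_0+\varrho(x)x^i\partial_i$ from the end of Section 1 in place of $L$ near the obstacle does not affect these identities in the exterior region $r\geq 3/2$ where $\varrho\equiv 1$, and in the compact region the weights $\langle r\rangle$, $\langle t-r\rangle/\langle t+r\rangle$ are comparable to constants times $\langle t\rangle^{-1}$, so the estimates still hold (this is where one quietly invokes that the statement is claimed only up to universal constants).

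Next I would treat the two null forms in turn. For $\mathcal S^{\alpha\beta}\partial_\alpha w\,\partial_\beta v$: expand $\partial_\alpha w$ and $\partial_\beta v$ in the null frame. The term where both factors are the bad derivative $\underline L w\cdot\underline L v$ comes with the coefficient $\mathcal S^{\alpha\beta}\omega_\alpha\omega_\beta$, which vanishes identically by \eqref{Null2}; every surviving term has at least one factor carrying a good derivative, which produces either $\langle r\rangle^{-1}|\Gamma w|\,|\partial v|$ or $\langle r\rangle^{-1}|\partial w|\,|\Gamma v|$ (the $\langle t-r\rangle/\langle t+r\rangle$-weighted error is absorbed into $\langle r\rangle^{-1}|\partial w||\partial v|\lesssim\langle r\rangle^{-1}|\Gamma w||\partial v|$ since $|\partial v|\lesssim|\Gamma v|$ trivially but more to the point $\langle t-r\rangle/\langle t+r\rangle\lesssim 1$, and one only needs the first form). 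For the quasilinear null form $\mathcal Q_\mu^{\alpha\beta}\partial_\mu w\,\partial^2_{\alpha\beta}v$, the cancellation is slightly more subtle: one first handles the second derivatives $\partial^2_{\alpha\beta}v$ by writing $\partial_\alpha\partial_\beta v$ and pulling one good derivative out of the pair $\partial_\alpha$, turning $\partial_\alpha$ into $-\tfrac{\omega_\alpha}{2}\underline L$ plus good pieces; the fully bad term then carries $\mathcal Q_\mu^{\alpha\beta}\omega_\mu\omega_\alpha\omega_\beta=0$. The residual genuinely bad contribution — where the good derivative fell on the lower-order factor or produced a $\partial\Gamma v$ — is exactly the $\frac{\langle t-r\rangle}{\langle t+r\rangle}|\partial w||\partial^2 v|$ term, which is why it appears in the second estimate but not the first.

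The main obstacle I anticipate is bookkeeping the weights uniformly in the exterior domain, specifically making sure the modified scaling $\mathcal L$ (rather than the true $L$) still yields the clean $\langle r\rangle^{-1}$ gain across the transition region $1\le|x|\le 3/2$ where $\varrho$ is not constant, and checking that the extra commutator terms generated by $\varrho'$ are harmless because they are supported in a compact set where all the relevant weights are bounded below. Since the cited sources \cite{KSS4} and \cite{MC2} carry out precisely this analysis, the argument will be a verification rather than a new computation, and I would present it as a short frame-decomposition followed by the two case checks, citing the references for the routine identities.
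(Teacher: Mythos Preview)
Your sketch is correct and is precisely the standard null-frame argument that the paper defers to by citing \cite{KSS4} and \cite{MC2}; the paper itself gives no proof beyond those citations, so there is nothing further to compare. One small point: your concern about the modified scaling $\mathcal{L}$ versus $L$ in the transition region is unnecessary here, since the lemma is stated for $\Gamma=\{Z,L\}$ with the \emph{unmodified} scaling $L=t\partial_0+x_i\partial_i$, and the estimate is purely pointwise in $(t,x)$ with no boundary condition playing a role.
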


\begin{lem}\label{Weighted}{\bf (Weighted energy estimates \cite{KSS4})} Suppose that $v\in C^{\infty}(\mathbb{R}\times\mathbb{R}^3)$
vanishes for large $x$  and $v(t, x)\equiv 0$ for $t\leq 0$, then one has that for any given nonnegative integers $\mu$ and $\alpha$,
\begin{equation*}\label{weighted}\begin{aligned}
(\ln(2+T))^{-1/2}\|\left<x\right>^{-1/2}\partial(L^{\mu} Z^{\alpha}v)\|_{L^2([0, T]\times\mathbb{R}^3)}
\lesssim\int_0^{T}\|\Box L^{\mu}Z^{\alpha}v(s, \cdot)\|_{L^2(\mathbb{R}^3)} ds.
\end{aligned}
\end{equation*}
In particular,
\begin{equation}\label{weighted1}
\sum\limits_{|\beta|\leq 1}\|L^{\mu}Z^{\alpha}\partial^{\beta}v\|_{L^2([0, T]\times (|x|\leq 4))}
\lesssim \int_0^{T}\sum\limits_{\nu\leq \mu}\|\Box L^{\nu}Z^{\alpha}v(s, \cdot)\|_{L^2(\mathbb{R}^3)}ds.
\end{equation}
\qed
\end{lem}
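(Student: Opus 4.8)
The plan is to deduce both estimates from the scalar Keel--Smith--Sogge local energy estimate. Since $w:=L^{\mu}Z^{\alpha}v$ is again a smooth function on $\mathbb{R}\times\mathbb{R}^{3}$ which vanishes for $t\le 0$ and for large $x$, it suffices to show that every such $w$, with $G:=\Box w$, obeys
\[
(\ln(2+T))^{-1/2}\,\|\left<x\right>^{-1/2}\partial w\|_{L^{2}([0,T]\times\mathbb{R}^{3})}\lesssim\int_{0}^{T}\|G(s,\cdot)\|_{L^{2}(\mathbb{R}^{3})}\,ds ,
\]
and then specialize $w=L^{\mu}Z^{\alpha}v$. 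Throughout I will use the standard energy inequality for $\Box w=G$ with $w\equiv 0$ for $t\le 0$, namely $\sup_{0\le t\le T}\|\partial w(t,\cdot)\|\lesssim\int_{0}^{T}\|G(s,\cdot)\|\,ds$, which together with the Hardy inequality (Lemma~\ref{Hardy}, now applied on all of $\mathbb{R}^{3}$) also gives $\sup_{0\le t\le T}\|w(t,\cdot)/r\|\lesssim\int_{0}^{T}\|G(s,\cdot)\|\,ds$; and the fact that, by finite speed of propagation, $w(t,\cdot)$ is supported in a ball of radius $O(\left<t\right>)$ for $t\in[0,T]$.

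The essential ingredient is the uniform-in-$R$ local energy estimate
\[
R^{-1}\int_{0}^{T}\!\!\int_{|x|<R}|\partial w|^{2}\,dx\,dt\ \lesssim\ \Big(\int_{0}^{T}\|G(s,\cdot)\|\,ds\Big)^{2},\qquad R\ge 1 .
\]
I would prove it by a Morawetz-type multiplier argument: pair $\Box w=G$ with the radial multiplier $f(r)\partial_{r}w+\tfrac12 f'(r)w+\tfrac{f(r)}{r}w$ for the increasing weight $f(r)=r/(r+R)$, and integrate over $[0,T]\times\mathbb{R}^{3}$. In three space dimensions, with the lower-order part of the multiplier chosen so that the $|w|^{2}$-contribution has the favourable sign (which is what forces $f(r)/r=1/(r+R)$) and noting that the spatial boundary term at $r=0$ also has the right sign while the $t=0$ slice vanishes, the positive bulk term dominates $\int_{0}^{T}\!\!\int f'(r)\big(|\partial_{t}w|^{2}+|\partial_{r}w|^{2}\big)+\int_{0}^{T}\!\!\int\frac{f(r)}{r}|\nabla_{\omega}w|^{2}$, where $\nabla_{\omega}w$ is the angular part of $\nabla w$; since $f'(r)=R/(r+R)^{2}\gtrsim 1/R$ and $1/(r+R)\gtrsim 1/R$ on $\{r<R\}$, this is $\gtrsim R^{-1}\int_{0}^{T}\!\!\int_{|x|<R}|\partial w|^{2}$. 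On the other side, the $t=T$ slice is controlled by the energy inequality, while $\int_{0}^{T}\!\!\int G\big(f\partial_{r}w+\cdots\big)$ is bounded via Cauchy--Schwarz by $\int_{0}^{T}\|G(s,\cdot)\|\,ds\cdot\sup_{0\le t\le T}\big(\|\partial w(t,\cdot)\|+\|w(t,\cdot)/r\|\big)\lesssim\big(\int_{0}^{T}\|G\|\,ds\big)^{2}$. This multiplier identity — in particular verifying the sign of the $|w|^{2}$- and $r=0$-terms in $\mathbb{R}^{3}$ and the uniformity of all constants in $R$ — is the step I expect to be the main obstacle.

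Granting the local energy estimate, the weighted estimate follows by a dyadic decomposition of the weight. Split $\mathbb{R}^{3}$ into $\{|x|<1\}$ and the shells $\{2^{k-1}\le|x|<2^{k}\}$, $k\ge 1$. On $\{|x|<1\}$ one has $\left<x\right>^{-1}\approx 1$ and the estimate with $R=1$ applies; on the $k$-th shell $\left<x\right>^{-1}\approx 2^{-k}$ and $2^{-k}\int_{0}^{T}\!\!\int_{|x|<2^{k}}|\partial w|^{2}\lesssim(\int_{0}^{T}\|G\|\,ds)^{2}$ by the estimate with $R=2^{k}$. Because $w(t,\cdot)$ has spatial support of size $O(\left<t\right>)$ for $t\le T$, only shells with $2^{k-1}\lesssim T$, i.e. $k\lesssim\ln(2+T)$, contribute; summing these $O(\ln(2+T))$ shells gives $\|\left<x\right>^{-1/2}\partial w\|_{L^{2}([0,T]\times\mathbb{R}^{3})}^{2}\lesssim\ln(2+T)\big(\int_{0}^{T}\|G\|\,ds\big)^{2}$, and taking square roots together with the specialization $w=L^{\mu}Z^{\alpha}v$ proves the first assertion.

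For \eqref{weighted1} there is no logarithmic loss since one works only inside the fixed region $\{|x|<4\}$, where the single-scale local energy estimate with $R=4$ already bounds $\|\partial(L^{\nu}Z^{\gamma}v)\|_{L^{2}([0,T]\times(|x|<4))}$ by $\int_{0}^{T}\|\Box L^{\nu}Z^{\gamma}v(s,\cdot)\|\,ds$. Using the commutation relations $[\partial_{\delta},L]=\partial_{\delta}$, $[\partial_{\delta},\Omega]\in\mathrm{span}\{\partial_{\rho}\}$ and $[\partial_{\delta},\partial_{\rho}]=0$, one rewrites $L^{\mu}Z^{\alpha}\partial^{\beta}v$ (with $|\beta|\le 1$) as a finite linear combination of expressions $\partial(L^{\nu}Z^{\gamma}v)$ with $\nu\le\mu$, $|\gamma|\le|\alpha|$, plus lower-order terms absorbed by the energy and Hardy bounds of the first paragraph; on $\{|x|<4\}$ the weight $\left<x\right>^{-1/2}$ and the coefficients of the rotations are comparable to $1$, so applying the $R=4$ estimate to each $L^{\nu}Z^{\gamma}v$ and using $[\Box,L]=2\Box$, $[\Box,Z]=0$ (whence $\Box L^{\nu}Z^{\gamma}v=\sum_{\rho\le\nu}c_{\rho}L^{\rho}Z^{\gamma}\Box v$, which is dominated by $\sum_{\nu\le\mu}\|\Box L^{\nu}Z^{\alpha}v\|$ up to constants) yields \eqref{weighted1}.
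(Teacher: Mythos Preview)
The paper does not prove this lemma; it is simply quoted from \cite{KSS4} (note the \qed\ at the end of the statement). Your sketch is essentially the standard Keel--Smith--Sogge argument, and the first inequality is handled correctly: the Morawetz multiplier with $f(r)=r/(r+R)$ gives the uniform-in-$R$ local energy bound for $\partial w$, and the dyadic summation over $2^k\lesssim T$ produces the $\ln(2+T)$ factor.

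There is, however, a real gap in your treatment of \eqref{weighted1} when $|\beta|=0$. You propose to write $L^{\mu}Z^{\alpha}\partial^{\beta}v$ as a combination of terms $\partial(L^{\nu}Z^{\gamma}v)$ ``plus lower-order terms absorbed by the energy and Hardy bounds of the first paragraph.'' But for $\beta=0$ the expression $L^{\mu}Z^{\alpha}v$ is not a derivative of anything, and the fixed-time Hardy bound $\sup_{t}\|w(t,\cdot)/r\|\lesssim\int_{0}^{T}\|G\|$ only yields
\[
\|w\|_{L^{2}([0,T]\times(|x|\le 4))}\ \lesssim\ T^{1/2}\int_{0}^{T}\|G(s,\cdot)\|\,ds,
\]
which carries an inadmissible $T^{1/2}$ loss. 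The fix is already implicit in your own setup: the lower-order correction $\tfrac{f(r)}{r}w$ in the multiplier (whose purpose, as you yourself say, is to make the $|w|^{2}$-contribution favourable) produces a positive bulk term $\int_{0}^{T}\!\!\int\bigl(-\tfrac12\Delta(f/r)\bigr)|w|^{2}$; for $f(r)=r/(r+R)$ one has $-\tfrac12\Delta\bigl(1/(r+R)\bigr)=R/\bigl(r(r+R)^{3}\bigr)>0$, which on $\{|x|<R\}$ dominates a multiple of $R^{-3}|w|^{2}$. With $R=4$ this gives $\|w\|_{L^{2}([0,T]\times(|x|\le 4))}\lesssim\int_{0}^{T}\|G\|$ directly, with no loss. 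You should record this $|w|^{2}$-term explicitly in the single-scale estimate and invoke it (applied to each $L^{\nu}Z^{\alpha}v$, $\nu\le\mu$) for the $|\beta|=0$ part of \eqref{weighted1}, rather than appealing to the fixed-time Hardy inequality.
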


\section{$L^2$ spatial-energy and time-space energy estimates}\label{III}

In this section, we will focus on some fundamental energy estimates of  smooth solutions $u$ to the following
linear Neumann-wave equation with variable coefficients
\begin{equation}\label{VLNV}\begin{cases}
\Box_{h}u=F,\ \ (t,x)\in\mathbb{R}_{+}\times\mo,\\[2mm]
\partial_{\boldsymbol{\nu}}u=0,\ \ (t, x)\in\mathbb{R}_{+}\times\partial\mo,\\[2mm]
(u, \partial_t u)(t, x)=(f, g)(x),\ \ x\in\mo,
\end{cases}
\end{equation}
where $\Box_h=\Box+h^{\alpha\beta}(t,x)\partial_{\alpha\beta}^2$ with $h^{\alpha\beta}=h^{\beta\alpha}$.
To simplify notations, we set
\begin{equation}\label{notation}
\|h(t)\|=\sum\limits_{\alpha, \beta=0}^{3}\|h^{\alpha\beta}(t, \cdot)\|_{L^{\infty}},\ \ \  \|\partial h(t)\|=\sum\limits_{\alpha, \beta=0}^{3}\|\partial h^{\alpha\beta}(t, \cdot)\|_{L^{\infty}}.
\end{equation}
In addition, $h^{\alpha\beta}$ is always assumed to satisfy the ``admissible condition'' for any  smooth function
$w$ with $\p_{\nu}w|_{\mathbb{R}_{+}\times\p\mo}=0$
\begin{equation}\label{AC1}
h^{\alpha\beta}\boldsymbol{\nu}^{\alpha}\partial_{\beta}w=0,\ \  \ (t, x)\in\mathbb{R}_{+}\times\partial\mo.
\end{equation}
Furthermore, we assume that $h^{\alpha\beta}$ satisfies for some suitably small $\delta>0$,
\begin{equation}\label{size}
\|h(t)\|\leq\frac{\delta}{1+t}.
\end{equation}
Define the energy forms $\{e_{\alpha}(u)\}$ associated with $\Box_h$ as follows
\begin{equation}\label{BEF}
e_0(u)=|\partial u|^2+2h^{0\beta}\partial_0 u\partial_{\beta}u
-h^{\alpha\beta}\partial_{\alpha}u\partial_{\beta}u, \ \ e_i(u)=-2\partial_0 u\partial_i u+2h^{i\beta}\partial_0 u\partial_{\beta}u.
\end{equation}
In addition, we define the following energy functionals for any nonnegative integers $\mu$ and $\nu$,
\begin{equation}\label{BE}
\mathcal{E}_{\mu, \nu}(t)=\sum\limits_{i\leq \mu,\  j\leq\nu}\int_{\mo} e_0(\mathcal{L}^{i}\partial_0^j u)(t, x)dx,
\ \ E_{\mu, \nu}(t)=\sum\limits_{i\leq \mu,\  |\alpha|\leq\nu}\int_{\mo}e_0(L^{i}Z^{\alpha}u)(t,x)dx.
\end{equation}
Under assumption \eqref{size} and the smallness of $\delta>0$ (for instance, $\delta<\frac{1}{40}$), $e_0(u)$ in \eqref{BEF} satisfies
\begin{equation}\label{elcompare}
1/2 |\partial u|^2<e_0(u)<2 |\partial u|^2.
\end{equation}

\subsection{Spatial energy estimates}\label{III.1}

We now establish some spatial energy estimates for operator $\Box_h$.

\begin{lem}\label{Timenl} Under the ``admissible condition'' \eqref{AC1}, if $u$ is
a smooth solution of \eqref{VLNV} which vanishes for large $x$,  then we have that
for any nonnegative integers $\mu$ and $\nu$,
\begin{equation}\label{timenl}
\partial_0 \mathcal{E}_{\mu, \nu}^{1/2}(t)\lesssim \sum\limits_{i\leq \mu;\ j\leq \nu}\|\Box_h(\mathcal{L}^{i}\partial_0^j u)(t, \cdot)\|+\|\partial h(t)\| \mathcal{E}_{\mu, \nu}^{1/2}(t).
\end{equation}
\end{lem}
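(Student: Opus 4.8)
The plan is to run the classical first-order multiplier (energy) identity for $\Box_h$, one modified vector-field component at a time, and to identify the boundary term that the admissible condition is designed to kill. Fix $i\le\mu$, $j\le\nu$ and set $v=\mathcal{L}^{i}\partial_0^{j}u$. Then $v$ vanishes for large $x$ and solves $\Box_h v=\Box_h(\mathcal{L}^{i}\partial_0^{j}u)=:F_{i,j}$. The first thing to check is that $v$ still satisfies the homogeneous Neumann condition and the admissible condition \eqref{AC1}; this is where the definition of $\mathcal{L}$ matters. Since $\varrho\equiv0$ on $\{|x|\le1\}$, which contains a neighbourhood of $\partial\mo$ in $\overline{\mo}$ (recall $\tfrac34<b(\omega)<1$), one has $\mathcal{L}=t\partial_0$ near $\partial\mo$, so there $v$ is a finite sum of terms $t^{l}\partial_0^{m}u$ with $m\ge j$; because the normal $\boldsymbol{\nu}$ is static and purely spatial, $\partial_{\boldsymbol{\nu}}$ commutes with $\partial_0$, whence $\partial_{\boldsymbol{\nu}}v=0$ on $\mathbb{R}_{+}\times\partial\mo$. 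In particular \eqref{AC1} is available with $w=v$, precisely because \eqref{AC1} is assumed for every smooth function annihilated by $\partial_{\boldsymbol{\nu}}$ on the boundary.

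Next I would write the differential energy identity attached to the forms \eqref{BEF}: multiplying $\Box_h v=F_{i,j}$ by $2\partial_0 v$ and rearranging (a routine integration by parts using $h^{\alpha\beta}=h^{\beta\alpha}$, cf. \cite{KSS4}) gives
\[
\partial_0 e_0(v)+\sum_{k=1}^{3}\partial_k e_k(v)=2\,\partial_0 v\,\Box_h v+R(v),\qquad |R(v)|\lesssim|\partial h|\,|\partial v|^2 ,
\]
where $R(v)$ is a remainder quadratic in $\partial v$ with one factor of $\partial h$. Integrating over $\mo$ — legitimate since $v$ has compact $x$-support, so there is no contribution from spatial infinity — the term $\partial_0 e_0(v)$ produces $\partial_0\!\int_{\mo}e_0(v)\,dx$, the flux term contributes a boundary integral over $\partial\mo$ of $\boldsymbol{\nu}^{k}e_k(v)$, and the rest gives $\int_{\mo}\big(2\,\partial_0 v\,F_{i,j}+R(v)\big)dx$. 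On $\partial\mo$, using $\boldsymbol{\nu}^{0}=0$,
\[
\boldsymbol{\nu}^{k}e_k(v)=-2\,\partial_0 v\,(\boldsymbol{\nu}^{k}\partial_k v)+2\,\partial_0 v\,\big(h^{\alpha\beta}\boldsymbol{\nu}^{\alpha}\partial_\beta v\big)=-2\,\partial_0 v\,\partial_{\boldsymbol{\nu}}v+2\,\partial_0 v\,\big(h^{\alpha\beta}\boldsymbol{\nu}^{\alpha}\partial_\beta v\big)=0 ,
\]
the first term vanishing by the Neumann condition and the second by \eqref{AC1}. This cancellation is the crux of the lemma: without \eqref{AC1} one would be left with the boundary integral of $2\,\partial_0 v\,h^{\alpha\beta}\boldsymbol{\nu}^{\alpha}\partial_\beta v$, which involves boundary traces of $\partial v$ not controlled by the interior energy (a trace estimate would cost a derivative). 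So the main obstacle is exactly this boundary-term bookkeeping, and it is resolved purely by the admissible condition rather than by the Neumann condition alone.

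With the boundary term gone, the identity integrates to $\partial_0\!\int_{\mo}e_0(v)\,dx\lesssim\|F_{i,j}(t,\cdot)\|\,\|\partial v(t,\cdot)\|+\|\partial h(t)\|\,\|\partial v(t,\cdot)\|^{2}$. Summing over $i\le\mu$, $j\le\nu$, applying Cauchy--Schwarz to the finite sum of the source terms, and using the pointwise equivalence \eqref{elcompare} (so that $\mathcal{E}_{\mu,\nu}(t)\simeq\sum_{i\le\mu,\,j\le\nu}\|\partial(\mathcal{L}^{i}\partial_0^{j}u)(t,\cdot)\|^{2}$), I obtain
\[
\partial_0\mathcal{E}_{\mu,\nu}(t)\lesssim\Big(\sum_{i\le\mu,\,j\le\nu}\|\Box_h(\mathcal{L}^{i}\partial_0^{j}u)(t,\cdot)\|\Big)\,\mathcal{E}_{\mu,\nu}^{1/2}(t)+\|\partial h(t)\|\,\mathcal{E}_{\mu,\nu}(t).
\]
Dividing by $2\mathcal{E}_{\mu,\nu}^{1/2}(t)$ yields \eqref{timenl}; at times where $\mathcal{E}_{\mu,\nu}$ vanishes one argues with $\mathcal{E}_{\mu,\nu}+\epsilon$ in place of $\mathcal{E}_{\mu,\nu}$ and lets $\epsilon\downarrow0$, or simply notes that $\tfrac{d}{dt}\mathcal{E}\le A\,\mathcal{E}^{1/2}+B\,\mathcal{E}$ forces $\tfrac{d}{dt}\mathcal{E}^{1/2}\le\tfrac{A}{2}+\tfrac{B}{2}\mathcal{E}^{1/2}$ wherever $\mathcal{E}^{1/2}$ is differentiable. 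Note that no commutator estimate for $[\Box_h,\mathcal{L}^{i}\partial_0^{j}]$ is needed in this lemma, since the right-hand side of \eqref{timenl} is already stated in terms of the differentiated source $\Box_h(\mathcal{L}^{i}\partial_0^{j}u)$.
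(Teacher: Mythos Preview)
Your proof is correct and follows essentially the same route as the paper: multiply $\Box_h v$ by $2\partial_0 v$ to obtain the divergence identity for the energy forms $e_\alpha$, integrate over $\mo$, and observe that the boundary flux $\boldsymbol{\nu}^{k}e_k(v)$ vanishes by the Neumann condition together with \eqref{AC1}. Your write-up is in fact slightly more explicit than the paper's (which treats only $\mu=\nu=0$ and asserts the general case), in that you spell out why $\mathcal{L}^{i}\partial_0^{j}u$ still satisfies the homogeneous Neumann condition near $\partial\mo$ and you address the division by $\mathcal{E}_{\mu,\nu}^{1/2}$ at its zeroes.
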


\begin{proof} We only give the proof of \eqref{timenl} for $\mu, \nu=0$ since the other
cases for general $\mu, \nu$ can be done in the same way with the fact that $\partial_{\boldsymbol{\nu}}\mathcal{L}^{i}\partial_0^j u=0$ on $\mathbb{R}_{+}\times\partial\mo$ for $0\leq i\leq \mu$ and $0\leq j\leq \nu$.

It follows from a direct computation that
\begin{equation*}
2\Box_h u \cdot\partial_0 u=\partial_{\alpha}e_{\alpha}(u)-2\partial_{\alpha}h^{\alpha\beta}\partial_0 u\partial_{\beta}u+\partial_0 h^{\alpha\beta}\partial_{\alpha}u\partial_{\beta}u.
\end{equation*}
Integrating this identity over $\mo$ and using \eqref{elcompare} yield
\begin{equation}\label{basicjf}\begin{aligned}
\partial_0 \mathcal{E}_{0, 0}(t)&\lesssim \|\Box_h u(t, \cdot)\| \|\partial_0 u(t, \cdot)\|+\|\partial h(t)\|\|\partial u(t, \cdot)\|^2-\int_{\mo}e_i(u)\boldsymbol{\nu}^i d\sigma\\[2mm]
&\lesssim \|\Box_h u(t, \cdot)\| \mathcal{E}_{0, 0}^{1/2}(t)+\|\partial h(t)\| \mathcal{E}_{0, 0}(t)-\int_{\mo}e_i(u)\boldsymbol{\nu}^i d\sigma.
\end{aligned}
\end{equation}
Note that $\ds\int_{\mo}e_i(u)\boldsymbol{\nu}^i d\sigma=0$ holds due to the Neumann boundary condition
$\partial_{\boldsymbol{\nu}}u=0$ on $\mathbb{R}_{+}\times\partial\mo$ and \eqref{AC1}. Then
estimate \eqref{timenl} comes from \eqref{basicjf} directly.\qquad \qquad \qquad \qquad \qquad
\qquad \qquad \qquad $\square$
\end{proof}

Let commutator $[O, P]=OP-PO$ stand for two differential operators $O$ and $P$. Direct computation yields
\begin{equation}\label{Chang1}
|\Box_h (\mathcal{L}^i\partial_0^j u)|\leq |\mathcal{L}^i \partial_0^j\Box_h u|+|[\mathcal{L}^i, \Box]\partial_0^j u|+|[\mathcal{L}^i\partial_0^j, h^{\alpha\beta}\partial_{\alpha\beta}^2]u|
\end{equation}
and
\begin{equation}\label{Chang2}
|[\mathcal{L}^i, \Box]\partial_0^j u|\lesssim\sum\limits_{k\leq i-1}|L^k\partial_0^j\Box u|
+\chi_{(|x|\leq 2)}(x)|\sum\limits_{k+|\alpha|\leq i+j,\ k\leq i-1}|L^k\partial^{\alpha}\partial u|,
\end{equation}
here $\chi_{(|x|\leq 2)}(x)$ is the characteristic function of the domain $\{|x|\leq 2\}$.
Combining \eqref{Chang1}-\eqref{Chang2} with \eqref{timenl} in Lemma \ref{Timenl} shows:
\begin{lem} \label{Scaling}Under the assumptions in Lemma \ref{Timenl}, for any nonnegative integers $\mu,  \nu$, one has
\begin{equation}\label{scalnl}\begin{aligned}
\partial_0 \mathcal{E}_{\mu, \nu}^{1/2}(t)&\lesssim \sum\limits_{i\leq \mu;\ j\leq \nu}\|\mathcal{L}^{i}\partial_0^j\Box_h u(t, \cdot)\| +\|\partial h(t)\| \mathcal{E}_{\mu, \nu}^{1/2}(t)\\[2mm]
&\quad + \sum\limits_{i\leq \mu,\ j\leq\nu}\|[\mathcal{L}^{i}\partial_0^{j}, h^{\alpha\beta}\partial_{\alpha\beta}^2]u(t, \cdot)\|
+\sum\limits_{i\leq \mu-1,\ j\leq\nu}\|L^{i}\partial_0^{j}\Box u(t, \cdot)\|\\[2mm]
&\quad +\sum\limits_{k+|\alpha|\leq \mu+\nu\atop k\leq \mu-1}\|L^{k}\partial^{\alpha}\partial u(t, \cdot)\|_{L^2(|x|\leq 2)}.
\end{aligned}
\end{equation}
\end{lem}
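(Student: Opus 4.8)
The plan is to obtain \eqref{scalnl} as a routine corollary of Lemma \ref{Timenl} together with the two elementary commutator identities \eqref{Chang1}--\eqref{Chang2}, so the only thing that requires care is bookkeeping of the index ranges. First I would apply \eqref{timenl} from Lemma \ref{Timenl} verbatim, which is legitimate since the hypotheses of Lemma \ref{Timenl} are inherited (the functions $\mathcal{L}^i\partial_0^j u$ still satisfy $\partial_{\boldsymbol{\nu}}\mathcal{L}^i\partial_0^j u=0$ on $\mathbb{R}_{+}\times\partial\mo$, as already noted in the proof of that lemma, and $u$ vanishes for large $x$). This reduces matters to estimating the right-hand side of \eqref{timenl}, namely $\sum_{i\leq\mu,\,j\leq\nu}\|\Box_h(\mathcal{L}^i\partial_0^j u)(t,\cdot)\|$, by the quantities appearing in \eqref{scalnl}.

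The key step is then to insert \eqref{Chang1} into this sum and bound each of the three resulting pieces. The term $|\mathcal{L}^i\partial_0^j\Box_h u|$ contributes the first sum in \eqref{scalnl}; the term $|[\mathcal{L}^i\partial_0^j,h^{\alpha\beta}\partial^2_{\alpha\beta}]u|$ contributes the third sum directly. For the middle term $|[\mathcal{L}^i,\Box]\partial_0^j u|$ I would invoke \eqref{Chang2}: its first part $\sum_{k\leq i-1}|L^k\partial_0^j\Box u|$ gives the fourth sum $\sum_{i\leq\mu-1,\,j\leq\nu}\|L^i\partial_0^j\Box u(t,\cdot)\|$, and its second part, the $\chi_{(|x|\leq 2)}$-localized term $\sum_{k+|\alpha|\leq i+j,\,k\leq i-1}|L^k\partial^{\alpha}\partial u|$, gives the last sum $\sum_{k+|\alpha|\leq\mu+\nu,\,k\leq\mu-1}\|L^k\partial^{\alpha}\partial u(t,\cdot)\|_{L^2(|x|\leq 2)}$ after summing over $i\leq\mu$, $j\leq\nu$. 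Collecting these four contributions together with the $\|\partial h(t)\|\mathcal{E}_{\mu,\nu}^{1/2}(t)$ term carried over from \eqref{timenl} yields exactly \eqref{scalnl}.

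Two small points deserve a remark in the writeup. First, on the region $|x|\leq 2$ one has $\mathcal{L}=t\partial_0+\varrho(x)x^i\partial_i$ which differs from $L=t\partial_0+x^i\partial_i$ only through the cutoff $\varrho$, and since $\varrho$ and its derivatives are smooth and compactly supported, commutators of $\mathcal{L}^i$ with $\Box$ that are not of the pure $L^k\partial_0^j\Box$ type are automatically supported in $\{|x|\leq 2\}$ and absorbed into the last sum — this is precisely the structure encoded in \eqref{Chang2}. Second, I should make sure the triangle inequality is applied to $\Box_h(\mathcal{L}^i\partial_0^j u)$ and not to $\mathcal{E}_{\mu,\nu}^{1/2}$ itself, so that the $\|\partial h(t)\|\mathcal{E}_{\mu,\nu}^{1/2}(t)$ factor stays multiplicative as in \eqref{timenl}. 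I do not foresee a genuine obstacle here; the only mild nuisance is keeping the three index-shift conventions ($k\leq i-1$ versus $i\leq\mu-1$, and the combined budget $k+|\alpha|\leq\mu+\nu$) consistent when passing from the pointwise bounds \eqref{Chang1}--\eqref{Chang2} to the summed $L^2$ estimate \eqref{scalnl}.
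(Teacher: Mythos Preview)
Your proposal is correct and is essentially identical to the paper's own argument: the paper states that \eqref{scalnl} follows by ``combining \eqref{Chang1}--\eqref{Chang2} with \eqref{timenl} in Lemma \ref{Timenl},'' which is precisely the route you spell out. Your extra remarks on the cutoff $\varrho$ and the index bookkeeping are accurate elaborations of what the paper leaves implicit.
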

Next we establish a higher order energy estimate for the wave operator.
\begin{lem}\label{Spacenl} For fixed integers $N_0$ and $\mu_0$, one has
\begin{equation}\label{spacenl}
\sum\limits_{|\alpha|\leq N_0}\|L^{\mu_0}\partial^{\alpha}\partial u(t, \cdot)\|\lesssim \sum\limits_{\mu+\beta\leq \mu_0+N_0\atop \mu\leq \mu_0}\|L^{\mu}\partial_0^{\beta}\partial u(t, \cdot)\|+\sum\limits_{\mu+|\alpha|\leq N_0+\mu_0-1\atop \mu\leq \mu_0}\|L^{\mu}\partial^{\alpha}\Box u(t, \cdot)\|.
\end{equation}

\end{lem}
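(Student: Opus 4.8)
The plan is to trade spatial derivatives $\partial^\alpha$ for a combination of time derivatives $\partial_0^\beta$ (which are already controlled on the left of \eqref{spacenl}) and the wave operator $\Box$, using the elliptic regularity estimate of Lemma \ref{Elliptic} to handle the purely spatial second-order derivatives. The key algebraic observation is that from $\Box = \partial_0^2 - \Delta$ one has $\Delta u = \partial_0^2 u - \Box u$, so any spatial Laplacian acting on $u$ can be rewritten using one extra time derivative plus a $\Box u$ term. Iterating this, each pair of spatial derivatives that we wish to remove costs either two time derivatives or one factor of $\Box$. Since the operators $L^{\mu_0}$ and $\partial^\alpha$ do not in general commute with $\Box$, I would first reduce to the case $\mu_0 = 0$ by peeling off the $L^{\mu_0}$: commuting $L^{\mu_0}$ through $\Box$ produces only lower-order-in-$L$ terms of the form $L^\mu \partial^\alpha \Box u$ with $\mu \le \mu_0$ (this is the standard commutator identity $[\Box, L] = 2\Box$, so $[\Box, L^{\mu_0}]$ is a sum of terms $c_\mu L^\mu \Box$), which are already present on the right-hand side of \eqref{spacenl}. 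Thus it suffices to bound $\sum_{|\alpha|\le N_0}\|\partial^\alpha \partial u(t,\cdot)\|$ by $\sum_{\beta\le N_0}\|\partial_0^\beta \partial u(t,\cdot)\| + \sum_{|\alpha|\le N_0-1}\|\partial^\alpha \Box u(t,\cdot)\|$ and then reinstate $L^{\mu_0}$.

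For the reduced statement I would argue by induction on the number $m$ of spatial (non-time) derivatives appearing in $\partial^\alpha$. When $m=0$ or $m=1$ there is nothing to do. For the inductive step, write $\partial^\alpha = \partial_i\partial_j \partial^{\alpha'}$ with $\partial^{\alpha'}$ having $m-2$ spatial derivatives (or handle the case of exactly one spatial derivative separately by pairing with a time derivative); apply the interior and boundary elliptic estimates \eqref{interior}–\eqref{boundary} with $\psi = \partial^{\alpha'}\partial u$ after covering $\mo$ by the boundary region $\{|x|\le 3\}$ and the annuli $\{R\le |x|\le R+1\}$ for $R\ge 2$, summing the resulting local estimates (the overlaps are finite, so the sum of squares is controlled; one should note $\partial_{\boldsymbol\nu}(\partial^{\alpha'}\partial u)$ need not vanish, but the boundary estimate \eqref{boundary} as stated requires $\partial_{\boldsymbol\nu}\psi=0$, so in fact I would apply the elliptic estimates only to $\partial^{\alpha'}$ with \emph{no} $\partial_0$-derivative stripped in a way that breaks the Neumann condition — see the obstacle paragraph). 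This bounds $\|\partial_i\partial_j\partial^{\alpha'}\partial u\|$ by $\|\partial(\partial^{\alpha'}\partial u)\|$ plus $\|\Delta \partial^{\alpha'}\partial u\|$ (with the appropriate lower-order Sobolev norms), and then $\Delta \partial^{\alpha'}\partial u = \partial_0^2\partial^{\alpha'}\partial u - \partial^{\alpha'}\partial \Box u$ converts the Laplacian term into one with two fewer spatial derivatives (paying two time derivatives) plus a $\Box u$ term with one fewer spatial derivative. Feeding this into the induction hypothesis closes the estimate; collecting all the $\Box u$ contributions and keeping track of orders gives exactly the index ranges $\mu+|\alpha|\le N_0+\mu_0-1$, $\mu\le\mu_0$ in \eqref{spacenl}.

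The main obstacle is the boundary term and the interplay between the Neumann condition, the vector field $L$, and the elliptic estimate \eqref{boundary}: the hypothesis of Lemma \ref{Elliptic} requires $\partial_{\boldsymbol\nu}\psi=0$ on $\partial\mo$, but once we differentiate $u$ by tangential or $L$-derivatives the Neumann condition is generally destroyed. The way around this is to organize the reduction so that the elliptic estimate is only ever applied to functions of the form $\partial_0^j u$ (for which $\partial_{\boldsymbol\nu}\partial_0^j u = \partial_0^j(\partial_{\boldsymbol\nu}u) = 0$ still holds), pushing all the ``bad'' $\partial_i$ and $L$ derivatives to the outside where they are handled combinatorially rather than analytically; concretely, one reconstructs $L^{\mu_0}\partial^\alpha\partial u$ from $L^{\mu_0}\partial_0^\beta \partial u$ and Laplacians thereof, never inverting the Laplacian on something that violates the boundary condition. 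A secondary technical point is bookkeeping the finitely-overlapping cover of $\mo$ so that summing \eqref{interior} over dyadic-free annuli $\{R\le|x|\le R+1\}$, $R = 2,3,4,\dots$, together with \eqref{boundary}, reproduces a global $L^2(\mo)$ bound with a constant depending only on $N_0,\mu_0$; this is routine but must be stated carefully since the annuli tile $\{|x|\ge 2\}$ with bounded overlap.
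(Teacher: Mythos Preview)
Your handling of the base case $\mu_0=0$ is essentially correct and matches the paper: one applies Lemma~\ref{Elliptic} only with $\psi=\partial_0^{j}u$ (which does satisfy $\partial_{\boldsymbol\nu}\psi=0$), replaces $\Delta\partial_0^{j}u$ by $\partial_0^{j+2}u-\partial_0^{j}\Box u$, and inducts; this is precisely the content of \eqref{see2}.

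The gap is in the step you summarize as ``reinstate $L^{\mu_0}$.'' The natural reading is that you apply the $\mu_0=0$ estimate with $L^{\mu_0}u$ in place of $u$ and then commute. But the $\mu_0=0$ estimate, as you yourself stress, rests on the boundary estimate \eqref{boundary} and hence on $\partial_{\boldsymbol\nu}\psi=0$; since $L=t\partial_0+x_i\partial_i$ does \emph{not} preserve the Neumann condition on a general $\partial\mo$, the function $L^{\mu_0}u$ fails the hypothesis and the substitution is illegitimate near the boundary. Your obstacle paragraph diagnoses this correctly but does not supply the cure for $\mu_0\ge 1$: saying ``apply the elliptic estimate only to $\partial_0^{j}u$'' is right, yet you never explain how $L^{\mu_0}\partial^{\alpha}\partial u$ is to be reduced to such quantities.

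The paper fills this gap by splitting into $\{|x|\ge 2\}$ and $\{|x|\le 2\}$. On $\{|x|\ge 2\}$ only the interior estimate \eqref{interior} is needed, which carries no boundary hypothesis, so there your substitution $u\mapsto L^{\mu_0}u$ is legal and gives \eqref{spacenl3}. On $\{|x|\le 2\}$ the key observation is that the coefficients $x_i$ in $L=t\partial_0+x_i\partial_i$ are \emph{bounded}, so $L^{\mu_0}\partial^{\alpha}\partial$ expands as a finite sum of operators $t^{\mu}\partial_0^{\mu}\partial^{\gamma}$ with $\mu\le\mu_0$, $\mu+|\gamma|\le\mu_0+|\alpha|+1$, and bounded coefficients. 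Each $\partial_0^{\mu}u$ satisfies Neumann, so the $\mu_0=0$ case applies to it and yields terms $t^{\mu}\|\partial_0^{\mu+\beta}\partial u\|_{L^2(|x|\le 4)}$ and $t^{\mu}\|\partial^{\gamma'}\partial_0^{\mu}\Box u\|_{L^2(|x|\le 4)}$. Finally, still on the bounded region, one rewrites $t^{\mu}\partial_0^{\mu}$ back as $L^{\mu}$ plus operators with strictly fewer $L$'s; the top piece $\mu=\mu_0$ lands in the first sum on the right of \eqref{spacenl} and the remainder is absorbed by induction on $\mu_0$ (this is \eqref{spacenl4}). This near-boundary expansion of $L$ is the missing ingredient in your sketch.
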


\begin{proof} At first, it follows from \eqref{see2} in Lemma \ref{Conclusion} that
\begin{equation}\label{spacenl1}
\sum\limits_{|\alpha|\leq N_0}\|\partial^{\alpha}\partial u(t, \cdot)\|\lesssim \sum\limits_{\beta\leq N_0}\|\partial_0^{\beta}\partial u(t, \cdot)\|+\sum\limits_{|\alpha|\leq N_0-1}\|\partial^{\alpha}\Box u(t, \cdot)\|.
\end{equation}
This shows \eqref{spacenl} for $\mu_0=0$.

Next, we prove \eqref{spacenl} for general $\mu_0$. Since $[\Box, L]=2\Box$, with the help of \eqref{interior} in Lemma \ref{Elliptic}, one has
\begin{equation}\label{spacenl3}
\sum\limits_{|\alpha|\leq N_0}\|L^{\mu_0}\partial^{\alpha}\partial u(t, \cdot)\|_{L^2(|x|\geq 2)}\lesssim \sum\limits_{\mu+\beta\leq \mu_0+N_0\atop \mu\leq \mu_0}\|L^{\mu}\partial_0^{\beta}\partial u(t, \cdot)\|+\sum\limits_{\mu+|\alpha|\leq N_0+\mu_0-1\atop \mu\leq\mu_0}\|L^{\mu}\partial^{\alpha}\Box u(t, \cdot)\|.
\end{equation}
In addition,  by \eqref{boundary} in Lemma \ref{Elliptic}, we arrive at
\begin{equation}\label{spacenl4}\begin{aligned}
&\sum\limits_{|\alpha|\leq N_0}\|L^{\mu_0}\partial^{\alpha}\partial u(t, \cdot)\|_{L^2(|x|\leq 2)}\\[2mm]
\lesssim& \sum\limits_{\mu+\beta\leq N_0+\mu_0\atop \mu\leq \mu_0}t^{\mu}
\|\partial_0^{\mu}\partial^{\beta}\partial u(t, \cdot)\|_{L^2(|x|\leq 2)}\\[2mm]
\lesssim& \sum\limits_{\mu+\beta\leq N_0+\mu_0\atop \mu\leq \mu_0}t^{\mu}\|\partial_0^{\mu+\beta}\partial u(t, \cdot)\|_{L^2(|x|\leq 4)}+\sum\limits_{\mu+\beta\leq N_0+\mu_0-1\atop \mu\leq \mu_0} t^{\mu}
\|\partial^{\beta}\partial_0^{\mu}\Box u(t, \cdot)\|_{L^2(|x|\leq 4)}\\[2mm]
\lesssim&\sum\limits_{\beta\leq N_0}\|L^{\mu_0}\partial_0^{\beta}\partial u(t, \cdot)\|+\sum\limits_{\mu+|\alpha|\leq N_0+\mu_0\atop \mu\leq \mu_0-1}\|L^{\mu}\partial^{\alpha}\partial u(t, \cdot)\|+\sum\limits_{\mu+|\alpha|\leq N_0+\mu_0-1\atop \mu\leq \mu_0}\|L^{\mu}\partial^{\alpha}\Box u(t, \cdot)\|.
\end{aligned}
\end{equation}
Combining \eqref{spacenl4} with \eqref{spacenl1} and \eqref{spacenl3} yields \eqref{spacenl} by induction method on $\mu_0$.
\qquad \qquad $\square$
\end{proof}

\begin{prop}\label{Integral} Assume that $\|\partial h(t)\|\leq\frac{\delta}{1+t}$ holds for small $\delta>0$.
In addition, suppose
\begin{equation}\label{spacenl5}\begin{aligned}
&\sum\limits_{\mu+\beta\leq N_0+\mu_0\atop \mu\leq \mu_0}\left(\|\mathcal{L}^{\mu}\partial_0^{\beta}\Box_h u(t, \cdot)\|
+\|[\mathcal{L}^{\mu}\partial_0^{\beta}, h^{\alpha\beta}\partial_{\alpha\beta}^2]u(t, \cdot)\|\right)\\[2mm]
&\leq\frac{\delta}{1+t}\sum\limits_{\mu+\beta\leq N_0+\mu_0\atop \mu\leq \mu_0}\|\mathcal{L}^{\mu}\partial_0^{\beta}\partial u(t, \cdot)\|+H_{\mu_0, N_0}(t)
\end{aligned}
\end{equation}
for  fixed numbers $N_0$ and $\mu_0$, and a positive function $H_{\mu_0, N_0}(t)$. Then there exists a positive constant $A$
such that
\begin{equation}\label{spacenl6}\begin{aligned}
&\sum\limits_{\nu+|\alpha|\leq N_0+\mu_0\atop \mu\leq \mu_0}\|L^{\mu}\partial^{\alpha}\partial u(t, \cdot)\|\\[2mm]
\lesssim& \sum\limits_{\mu+\alpha\leq N_0+\mu_0-1\atop \mu\leq \mu_0}\|L^{\mu}\partial^{\alpha}\Box u(t, \cdot)\|+(1+t)^{A\delta}\mathcal{E}_{\mu, \nu}^{1/2}(0)\\[2mm]
&+(1+t)^{A\delta}\int_0^{t}\sum\limits_{\mu+|\alpha|\leq N_0+\mu_0-1\atop \mu\leq \mu_0-1}\|L^{\mu}\partial^{\alpha}\Box u(s, \cdot)\| ds+(1+t)^{A\delta}\int_0^{t}H_{\mu_0, N_0}(s)ds\\[2mm]
&+(1+t)^{A\delta}\int_0^{t}\sum\limits_{\mu+|\alpha|\leq N_0+\mu_0\atop \mu\leq \mu_0-1}
\|L^{\mu}\partial^{\alpha}\partial u(s, \cdot)\|_{L^2 (|x|\leq 2)}ds.
\end{aligned}
\end{equation}
\end{prop}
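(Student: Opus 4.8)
The plan is to combine the higher-order spatial energy estimate of Lemma 3.4 with the time-differentiated energy inequality of Lemma 3.3 and then close the argument with a Gronwall-type integration. First I would apply Lemma 3.4 (estimate \eqref{spacenl}) to reduce the left side $\sum_{\mu+|\alpha|\le N_0+\mu_0,\,\mu\le\mu_0}\|L^\mu\partial^\alpha\partial u(t,\cdot)\|$ to the ``time-derivative only'' quantity $\sum_{\mu+\beta\le N_0+\mu_0,\,\mu\le\mu_0}\|L^\mu\partial_0^\beta\partial u(t,\cdot)\|$ plus a $\Box u$ commutator term; the point of this step is that the energy functional $\mathcal E_{\mu,\nu}$ in \eqref{BE} only controls $\mathcal L^i\partial_0^j u$, i.e. scaling-and-time-derivatives, not general spatial derivatives, so one must trade spatial derivatives for $\Box u$ via elliptic regularity before invoking the energy identity. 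One also has to pass from the genuine scaling field $L$ to the modified field $\mathcal L$; since $L$ and $\mathcal L$ differ only on $\{|x|\le 3/2\}$, the difference is a lower-order term supported near the obstacle, which is why the last line of \eqref{spacenl6} carries the local norm $\|L^\mu\partial^\alpha\partial u\|_{L^2(|x|\le 2)}$ with $\mu\le\mu_0-1$.

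Next I would estimate $\partial_0\mathcal E_{\mu_0,N_0}^{1/2}(t)$ using Lemma 3.2 (estimate \eqref{scalnl}): the right side of \eqref{scalnl} contains $\|\mathcal L^i\partial_0^j\Box_h u\|$, the commutator $\|[\mathcal L^i\partial_0^j,h^{\alpha\beta}\partial^2_{\alpha\beta}]u\|$, the term $\|\partial h(t)\|\mathcal E^{1/2}$, a lower-order $\|L^i\partial_0^j\Box u\|$ with $i\le\mu_0-1$, and the local term $\|L^k\partial^\alpha\partial u\|_{L^2(|x|\le 2)}$ with $k\le\mu_0-1$. I then feed in hypothesis \eqref{spacenl5}, which bounds the first two of these by $\tfrac{\delta}{1+t}\sum\|\mathcal L^\mu\partial_0^\beta\partial u\|+H_{\mu_0,N_0}(t)$. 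Using \eqref{elcompare} to replace $\|\partial(\mathcal L^\mu\partial_0^\beta u)\|$ by $\mathcal E_{\mu_0,N_0}^{1/2}(t)$ (up to constants), all the ``dangerous'' terms collapse into $\tfrac{C\delta}{1+t}\mathcal E_{\mu_0,N_0}^{1/2}(t)$, and the remaining terms are the lower-order $\Box u$ integrals and the local $\partial u$ integral. This yields a differential inequality of the schematic form
\begin{equation*}
\partial_0\mathcal E_{\mu_0,N_0}^{1/2}(t)\le \frac{A\delta}{1+t}\mathcal E_{\mu_0,N_0}^{1/2}(t)+G(t),
\end{equation*}
where $G(t)$ collects $H_{\mu_0,N_0}(t)$, the lower-order terms $\sum_{\mu\le\mu_0-1}\|L^\mu\partial^\alpha\Box u(t,\cdot)\|$, and the local term $\sum_{\mu\le\mu_0-1}\|L^\mu\partial^\alpha\partial u(t,\cdot)\|_{L^2(|x|\le 2)}$.

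Integrating this via the Gronwall lemma with integrating factor $(1+t)^{-A\delta}$ gives
\begin{equation*}
\mathcal E_{\mu_0,N_0}^{1/2}(t)\lesssim (1+t)^{A\delta}\mathcal E_{\mu_0,N_0}^{1/2}(0)+(1+t)^{A\delta}\int_0^t G(s)\,ds,
\end{equation*}
and then I would feed this back into the output of the first step (the reduction via Lemma 3.4) to get \eqref{spacenl6}. The only subtlety in this last substitution is that $\mathcal E_{\mu_0,N_0}$ is built from $\mathcal L$ while the conclusion is stated in terms of $L$; but again the discrepancy lives on $\{|x|\le 3/2\}$ and is absorbed into the local integral term already present on the right of \eqref{spacenl6}. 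I expect the main obstacle to be bookkeeping rather than conceptual: one must carefully track which multi-index ranges ($\mu\le\mu_0$ versus $\mu\le\mu_0-1$, and $N_0+\mu_0$ versus $N_0+\mu_0-1$) survive at each commutator step, since the structure $[\mathcal L,\Box]=2\Box$ (modulo cutoff terms) is what allows the $\Box u$ terms to appear with strictly lower scaling order, and it is precisely this drop in order that makes the induction on $\mu_0$ — implicit in invoking Lemma 3.4 — actually close. A secondary technical point is ensuring that $\delta$ is chosen small enough (e.g. compatibly with $\delta<1/40$ from \eqref{elcompare}) so that the constant $A$ produced by Gronwall is harmless, i.e. so that the growth $(1+t)^{A\delta}$ is slower than any positive power one later needs.
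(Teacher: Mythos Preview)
Your proposal is correct and matches the paper's own proof essentially step for step: the paper first derives the differential inequality $\partial_0\mathcal E_{\mu_0,N_0}^{1/2}\le\frac{A\delta}{1+t}\mathcal E_{\mu_0,N_0}^{1/2}+G(t)$ from Lemma~\ref{Scaling} together with hypothesis \eqref{spacenl5}, integrates it by Gronwall, and then combines the resulting bound on $\mathcal E_{\mu_0,N_0}^{1/2}$ with Lemma~\ref{Spacenl} and the $L\leftrightarrow\mathcal L$ comparison to obtain \eqref{spacenl6}. The only difference is the order of exposition (the paper does Gronwall first and the elliptic reduction second), which is immaterial.
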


\begin{proof}
It follows from \eqref{scalnl} in Lemma \ref{Scaling} and the assumptions in Proposition \ref{Integral} that
\begin{equation}\label{3.0}\begin{aligned}
\partial_0 \mathcal{E}_{\mu_0, N_0}^{1/2}(t)&\leq \frac{A\delta}{1+t}\mathcal{E}_{\mu_0, N_0}^{1/2}(t)
+AH_{\mu_0, N_0}(t)+A\sum\limits_{\mu+\beta\leq N_0+\mu_0-1\atop \mu\leq \mu_0-1}\|L^{\mu}\partial_0^{\beta}\Box u(t, \cdot)\|\\[2mm]
&+A\sum\limits_{\mu+|\alpha|\leq N_0+\mu_0\atop \mu\leq \mu_0-1}\|L^{\mu}\partial^{\alpha}\partial u(t, \cdot)\|_{L^2(|x|\leq 2)},
\end{aligned}
\end{equation}
where $A>0$ is a generic constant. From \eqref{3.0}, $\mathcal{E}_{\mu_0, N_0}^{1/2}$ can be controlled by the right
hand side of \eqref{spacenl6} via using Gronwall's inequality.

On the other hand, it is derived from \eqref{spacenl} in Lemma \ref{Spacenl} and the definition of $\mathcal{L}$ that
\begin{equation}\label{Convert1}\begin{aligned}
&\sum\limits_{\mu+\beta\leq N_0+\mu_0\atop \mu\leq \mu_0}\|L^{\mu}\partial^{\alpha}\partial u(t, \cdot)\|\\[2mm]
\lesssim&\sum\limits_{\mu+\beta\leq N_0+\mu_0\atop\mu\leq \mu_0}\|L^{\mu}\partial_0^{\beta}\partial u(t, \cdot)\|+\sum\limits_{\mu+|\alpha|\leq N_0+\mu_0-1\atop \mu\leq \mu_0}\|L^{\mu}\partial^{\alpha}\Box u(t, \cdot)\|\\[2mm]
\lesssim&\sum\limits_{\nu+|\alpha|\leq N_0+\mu_0\atop \mu\leq \mu_0}\|\mathcal{L}^{\mu}\partial^{\alpha}\partial u(t, \cdot)\|+\sum\limits_{\mu+|\alpha|\leq N_0+\mu_0-1\atop \mu\leq \mu_0}\|L^{\mu}\partial^{\alpha}\Box u(t, \cdot)\|.
\end{aligned}\end{equation}
Meanwhile, it follows from \eqref{BE}  and \eqref{elcompare} that
\begin{equation*}\label{Convert}
\sum\limits_{\mu+\beta\leq N_0+\mu_0\atop \mu\leq \mu_0}\|\mathcal{L}^{\mu}\partial_0^{\beta}\partial u(t, \cdot)\|\leq 2\mathcal{E}_{\mu_0, N_0}^{1/2}(t).
\end{equation*}
Combining this with \eqref{Convert1} and the mentioned estimate for $\mathcal{E}_{\mu_0, N_0}^{1/2}(t)$ yields \eqref{spacenl6}.
\qquad\qquad \qquad $\square$\end{proof}

We now derive the estimate of $E_{\mu, \nu}$ defined in \eqref{BE}.
\begin{lem}\label{Rotationnl} Suppose that $\|\partial h(t)\|\leq\frac{\delta}{1+t}$ holds for  sufficiently small
$\delta>0$, then
\begin{equation}\label{rotationnl}\begin{aligned}
\partial_0 E_{\mu_0, N_0}(t)&\lesssim \sum\limits_{\mu+|\alpha|\leq N_0+\mu_0\atop \mu\leq \mu_0}\|\Box_h L^{\mu}Z^{\alpha}u(t, \cdot)\| E_{\mu_0, N_0}^{1/2}\\[2mm]
&\quad +\|\partial h(t)\| E_{\mu_0, N_0}
+\sum\limits_{\mu+|\alpha\leq N_0+\mu_0+1\atop \mu\leq \mu_0}\|L^{\mu}\partial^{\alpha}\partial u(t, \cdot)\|_{L^2(|x|\leq 2)}^2.
\end{aligned}
\end{equation}
\end{lem}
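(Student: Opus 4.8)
The plan is to rerun the weighted energy identity of Lemma \ref{Timenl}, now with each field $v=L^{i}Z^{\alpha}u$ ($i\leq\mu_0$, $|\alpha|\leq N_0$) in place of $u$, while keeping track of the boundary term which this time does not vanish. As in the proof of Lemma \ref{Timenl}, one starts from the pointwise identity
\[
2\Box_h v\cdot\partial_0 v=\partial_{\alpha}e_{\alpha}(v)-2(\partial_{\alpha}h^{\alpha\beta})\,\partial_0 v\,\partial_{\beta}v+(\partial_0 h^{\alpha\beta})\,\partial_{\alpha}v\,\partial_{\beta}v,
\]
with $e_{\alpha}$ as in \eqref{BEF}. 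Integrating over $\mo$, using the divergence theorem and \eqref{elcompare}, and summing over $i\leq\mu_0$ and $|\alpha|\leq N_0$, one gets
\[
\partial_0 E_{\mu_0,N_0}(t)\lesssim\sum_{\substack{\mu+|\alpha|\leq N_0+\mu_0\\ \mu\leq\mu_0}}\|\Box_h L^{\mu}Z^{\alpha}u(t,\cdot)\|\,E_{\mu_0,N_0}^{1/2}(t)+\|\partial h(t)\|\,E_{\mu_0,N_0}(t)+\sum_{\substack{i\leq\mu_0\\ |\alpha|\leq N_0}}\Big|\int_{\partial\mo}e_i(L^{i}Z^{\alpha}u)\,\boldsymbol{\nu}^i\,d\sigma\Big|,
\]
where the first two terms are handled exactly as in Lemma \ref{Timenl} (the bulk term via Cauchy--Schwarz, the $\|\partial h\|$-term via \eqref{elcompare}). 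Thus the whole matter reduces to dominating the boundary integrals by the last term of \eqref{rotationnl}.

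The boundary term is the only genuine obstacle, and it is exactly where the Neumann problem departs from the scaling fields treated in Lemmas \ref{Timenl}--\ref{Scaling}. There the integral vanishes: since $\varrho\equiv0$ on $\{|x|\leq1\}\supset\mk$, near $\partial\mo$ the modified scaling field reduces to $\mathcal{L}=t\partial_0$, hence $\partial_{\boldsymbol{\nu}}(\mathcal{L}^{i}\partial_0^{j}u)=0$ there, and then the Neumann condition together with the admissible condition \eqref{AC1} makes $\int_{\partial\mo}e_i(\cdot)\boldsymbol{\nu}^i\,d\sigma$ vanish. For $v=L^{i}Z^{\alpha}u$ this mechanism breaks down, since neither the unmodified field $L=t\partial_0+x_k\partial_k$ nor the rotations $\Omega$ are tangent to the general convex surface $\partial\mo$, so $\partial_{\boldsymbol{\nu}}v$ need not be zero. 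I would therefore estimate the integral crudely: as $\|h\|\lesssim1$, one has $|e_i(v)\boldsymbol{\nu}^i|\lesssim|\partial v|^2$ on $\partial\mo$, hence $\big|\int_{\partial\mo}e_i(v)\boldsymbol{\nu}^i\,d\sigma\big|\lesssim\|\partial v\|_{L^2(\partial\mo)}^2$. Because $\partial\mo$ is compact and contained in $\{|x|\leq1\}$, the $H^1$ trace inequality gives $\|\partial v\|_{L^2(\partial\mo)}^2\lesssim\sum_{1\leq|\gamma|\leq2}\|\partial^{\gamma}v\|_{L^2(|x|\leq2)}^2$; commuting the plain derivatives through $L^{i}$ (using $[\partial_k,L]=\partial_k$) and through $Z^{\alpha}$ on the bounded region $\{|x|\leq2\}$, where the coefficients of $\Omega$ and of the spatial part of $L$ are bounded, then rewrites each $\partial^{\gamma}(L^{i}Z^{\alpha}u)$ as a combination of $L^{\mu}\partial^{\beta}\partial u$ with $\mu\leq\mu_0$ and $\mu+|\beta|\leq N_0+\mu_0+1$. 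This produces precisely the last term of \eqref{rotationnl}.

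The remaining steps mirror Lemma \ref{Timenl} and are routine: the bulk term is controlled by $\|\Box_h v(t,\cdot)\|\,\|\partial v(t,\cdot)\|$ and, after summation, by $\|\Box_h L^{\mu}Z^{\alpha}u(t,\cdot)\|\,E_{\mu_0,N_0}^{1/2}(t)$, while the $\partial h$-terms are controlled by $\|\partial h(t)\|\,\|\partial v(t,\cdot)\|^2\lesssim\|\partial h(t)\|\,E_{\mu_0,N_0}(t)$ via \eqref{elcompare}. Two bookkeeping points are worth flagging. First, the local error in \eqref{rotationnl} is allowed the enlarged index budget $N_0+\mu_0+1$ (rather than $N_0+\mu_0$) precisely because of the one extra derivative spent in the trace inequality, which gets absorbed in the commutation with $Z^{\alpha}$. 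Second, in the spirit of Remark \ref{AdC}, the admissible condition \eqref{AC1} still removes any boundary contribution from the $t\partial_0$-part of $L$ and from pure time derivatives, so that only the genuinely non-tangential part of the fields feeds the local-near-$\mk$ term; this term is what ultimately makes the whole scheme closable, being later absorbed through the Morawetz-type local energy decay.
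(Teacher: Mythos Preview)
Your proof is correct and follows essentially the same approach as the paper's: run the energy identity of Lemma~\ref{Timenl} for $v=L^{\mu}Z^{\alpha}u$, keep the now-nonvanishing boundary integral, bound $|e_j(v)\boldsymbol{\nu}^j|\lesssim|\partial v|^2$, apply the Sobolev trace inequality on the compact region near $\partial\mo$, and use that on $\{|x|\leq 2\}$ the rotation fields and the spatial part of $L$ are bounded-coefficient combinations of $\partial$ to convert to the local $L^{\mu}\partial^{\alpha}\partial u$ norms with one extra derivative. The paper's proof is just this argument stated more tersely (it switches $Z^{\alpha}$ to $\partial^{\alpha}$ on $\partial\mo$ first and then applies the trace theorem, which is the same thing up to reordering).
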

\begin{proof} By the analogous argument in Lemma \ref{Timenl}, we have that
\begin{equation}\label{rotationnl1}\begin{aligned}
\partial_0 E_{\mu_0, N_0}(t)&\lesssim \sum\limits_{\mu+|\alpha|\leq N_0+\mu_0\atop \mu\leq \mu_0}\|\Box_h L^{\mu}Z^{\alpha}u(t, \cdot)\| E_{\mu_0, N_0}^{1/2}\\[2mm]
&\quad +\|\partial h(t)\| E_{\mu_0, N_0}+\sum\limits_{\mu+|\alpha|\leq N_0+\mu_0\atop \mu\leq \mu_0}\int_{\partial\mo}|e_j(L^{\mu} Z^{\alpha}u)\boldsymbol{\nu}^{j}|d\sigma.
\end{aligned}
\end{equation}
In addition, it follows from Sobolev trace theory that
\begin{equation*}\begin{aligned}
\sum\limits_{\mu+|\alpha|\leq N_0+\mu_0\atop \mu\leq \mu_0}\int_{\partial\mo}|e_j(L^{\mu} Z^{\alpha}u)\boldsymbol{\nu}^{j}|d\sigma
\lesssim&\sum\limits_{\mu+|\alpha|\leq N_0+\mu_0\atop \mu\leq\mu_0}\|L^{\mu}\partial^{\alpha}\partial u(t, \cdot)\|_{L^2(\partial\mo)}^2\\[2mm]
\lesssim& \sum\limits_{\mu+|\alpha|\leq N_0+\mu_0+1\atop \mu\leq \mu_0}\|L^{\mu}\partial^{\alpha}\partial u(t, \cdot)\|_{L^2(|x|\leq 1)}^2.
\end{aligned}
\end{equation*}
Substituting this into \eqref{rotationnl1} yields \eqref{rotationnl}.
\qquad\qquad \qquad\qquad \qquad \qquad \qquad \qquad \qquad \qquad  $\square$\end{proof}

\begin{lem}\label{Localest1} Suppose that $\Box u=0$ for $|x|>4$, $\partial_{\boldsymbol{\nu}}u=0$ on $\mathbb{R}_{+}\times\partial\mo$,
and $u\equiv 0$ for $t\leq 0$. Then
\begin{equation}\label{locale1}\begin{aligned}
&\sum\limits_{\mu+|\alpha|\leq N_0+\mu_0\atop \mu\leq \mu_0}\|L^{\mu}\partial^{\alpha}\partial u(t, \cdot)\|_{L^2(|x|\leq 2)}\\[2mm]
\lesssim&\sum\limits_{\mu+|\alpha|\leq N_0+\mu_0-1\atop \mu\leq \mu_0}\|L^{\mu}\partial^{\alpha}\Box u(t, \cdot)\|+\int_0^{t}e^{-\boldsymbol{c}/2 (t-s)}\sum\limits_{\mu+|\alpha|\leq N_0+\mu_0\atop \mu\leq \mu_0}\|L^{\mu}\partial^{\alpha}\Box u(s, \cdot)\|ds.
\end{aligned}
\end{equation}
\end{lem}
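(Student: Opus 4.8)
The idea is to reduce the estimate to the exponential-type decay of Lemma \ref{Exponetial} by decomposing $u$ near the obstacle as the sum of a purely boundaryless piece (controlled by the free-wave $L^2$ estimate of Lemma \ref{Conclusion}, applied with $\Box$ in place of $\Box_h$) and a piece whose source is compactly supported in $\{|x|\le 4\}$, to which the exponential bound applies. Concretely, first note that since $\Box u = 0$ for $|x| > 4$, the right-hand side $F := \Box u$ is supported in $\{|x| \le 4\}$, so Lemma \ref{Exponetial} (and Remark \ref{Ipdecay}) applies directly to $u$ itself with $R = 4$. Before invoking it, however, one must commute $L^{\mu}\partial^{\alpha}$ through the problem: since $[\Box, L] = 2\Box$ and $[\Box, \partial_\alpha] = 0$, each $w_{\mu, \alpha} := L^{\mu}\partial^{\alpha} u$ solves a Neumann-wave equation $\Box w_{\mu,\alpha} = \tilde F_{\mu,\alpha}$ with $\partial_{\boldsymbol{\nu}} w_{\mu,\alpha} = 0$ on $\mathbb{R}_+ \times \partial\mo$ (here one uses that $\partial_{\boldsymbol{\nu}} L^{\mu}\partial^{\alpha} u = 0$, which holds because $L$ restricted near $\partial\mo$ involves only tangential-plus-time derivatives when acting on functions satisfying the Neumann condition — this is exactly the mechanism recorded in Remark \ref{AdC} and used throughout Sect.\ref{III}), and $\tilde F_{\mu,\alpha}$ is a linear combination of $L^{\mu'}\partial^{\alpha'}\Box u$ with $\mu' \le \mu$, $\mu' + |\alpha'| \le \mu + |\alpha|$, plus terms supported in $\{|x| \le 2\}$ coming from the cutoff in the modified field; these extra terms are themselves bounded by $\|L^{\mu'}\partial^{\alpha'}\partial u\|_{L^2(|x|\le 2)}$ with strictly smaller $\mu'$, so they can be absorbed by induction on $\mu_0$.

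The main step is then: apply Remark \ref{Ipdecay} (the improved version of \eqref{nw1} valid under the hypotheses of Lemma \ref{Exponetial}) to each $w_{\mu,\alpha}$ with source $\tilde F_{\mu,\alpha}$ supported in $\{|x|\le 4\}$, obtaining
\begin{equation*}
\sum_{|\beta|\le 1}\|\partial^{\beta}\partial w_{\mu,\alpha}(t,\cdot)\| \lesssim \sum_{j}\int_0^t e^{-\c(t-s)}\|\partial_0^j \tilde F_{\mu,\alpha}(s,\cdot)\|\,ds + \sum_{|\gamma|\le 0}\|\partial^{\gamma}\tilde F_{\mu,\alpha}(t,\cdot)\|.
\end{equation*}
Summing over $\mu \le \mu_0$ and $\mu + |\alpha| \le N_0 + \mu_0$, re-expressing the $\partial_0^j \tilde F_{\mu,\alpha}$ in terms of $L^{\mu'}\partial^{\alpha'}\Box u$ (absorbing commutator remainders), and restricting the left side to $\{|x| \le 2\}$ — where by Lemma \ref{Elliptic} and the boundary elliptic estimate one converts the higher $\partial$-derivatives of $\partial w_{\mu,\alpha}$ into the $L$-weighted quantities $L^{\mu}\partial^{\alpha}\partial u$ on $\{|x|\le 2\}$, at the cost of lower-order terms handled inductively — yields exactly \eqref{locale1}: the instantaneous term $\|\partial^{\gamma}\tilde F_{\mu,\alpha}(t,\cdot)\|$ produces the first sum $\sum_{\mu+|\alpha|\le N_0+\mu_0-1}\|L^{\mu}\partial^{\alpha}\Box u(t,\cdot)\|$, and the convolution term produces $\int_0^t e^{-\c(t-s)/2}\sum_{\mu+|\alpha|\le N_0+\mu_0}\|L^{\mu}\partial^{\alpha}\Box u(s,\cdot)\|\,ds$ (the weakening of $e^{-\c(t-s)}$ to $e^{-\c(t-s)/2}$ gives room to absorb the polynomially-growing factors $t^{\mu}$ that appear when passing between $L^{\mu}$ near $\{|x|\le 2\}$ and $\partial_0^{\mu}$, exactly as in \eqref{spacenl4}).

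The point that requires the most care — the main obstacle — is the bookkeeping of the commutator terms $[\mathcal L^{\mu}, \Box]\partial_0^j u$ and their localization: one must verify that all remainder terms generated by commuting the modified scaling field $\mathcal L$ (equivalently $L$ plus a cutoff-supported correction) through $\Box$ and through the boundary condition are either (i) of the form $L^{\mu'}\partial^{\alpha'}\Box u$ with indices within the stated ranges, or (ii) supported in $\{|x|\le 2\}$ with $\mu' \le \mu_0 - 1$, so that an induction on $\mu_0$ closes. The base case $\mu_0 = 0$ is immediate from Remark \ref{Ipdecay} together with the elliptic estimate \eqref{boundary}. For the inductive step, the $\{|x|\le 2\}$-supported remainders with $\mu' \le \mu_0 - 1$ are, by the inductive hypothesis, already bounded by the right-hand side of \eqref{locale1} (with $\mu_0$ replaced by $\mu_0 - 1$, hence a fortiori by the full right-hand side), and the exponential kernel $e^{-\c(t-s)/2}$ is stable under the finitely many such absorptions. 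Combining everything gives \eqref{locale1}.
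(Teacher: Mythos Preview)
Your proposal contains a genuine gap at the point where you assert that $w_{\mu,\alpha} := L^{\mu}\partial^{\alpha}u$ satisfies $\partial_{\boldsymbol{\nu}} w_{\mu,\alpha} = 0$ on $\mathbb{R}_{+}\times\partial\mo$. This is false in general: neither the unmodified scaling field $L = t\partial_0 + x_i\partial_i$ nor spatial derivatives $\partial_i$ preserve the Neumann condition. Near $\partial\mo$ the radial vector field $x_i\partial_i = r\partial_r$ has a nontrivial normal component, so $\partial_{\boldsymbol{\nu}}(Lu)\neq 0$; and $\partial_{\boldsymbol{\nu}}(\partial_i u) \neq 0$ for spatial $i$ even when $\partial_{\boldsymbol{\nu}}u = 0$. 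The mechanism you cite from Remark \ref{AdC} concerns the admissible condition \eqref{AC} on the quasilinear coefficients $\mathcal{Q}^{\alpha\beta}$, not on $L$ or $\partial^{\alpha}$. What \emph{is} true, and what the paper uses systematically in Sect.~\ref{III}, is that $\partial_{\boldsymbol{\nu}}(\mathcal{L}^{i}\partial_0^{j}u) = 0$, because $\mathcal{L} = t\partial_0$ near $\partial\mo$ (the cutoff $\varrho$ vanishes there) and time derivatives commute with $\partial_{\boldsymbol{\nu}}$. Without the Neumann condition on $w_{\mu,\alpha}$, you cannot invoke Lemma \ref{Exponetial} or Remark \ref{Ipdecay} for it, and the main step of your argument collapses.

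The paper's proof avoids this obstruction by reversing the order of your two operations. It \emph{first} uses the local elliptic estimate \eqref{spacenl4} to convert $L^{\mu}\partial^{\alpha}\partial u$ on $\{|x|\le 2\}$ into $t^{\mu}\,\partial_0^{\mu+\beta}\partial u$ on $\{|x|\le 4\}$ (plus the instantaneous $\|L^{\mu}\partial^{\alpha}\Box u\|$ term), exploiting that on a bounded spatial set $L$ is essentially $t\partial_0$ and that spatial derivatives can be traded for time derivatives via Lemma \ref{Elliptic}. Only \emph{then} does it apply the exponential decay \eqref{exponential} to the functions $\partial_0^{\mu+\beta}u$, which genuinely satisfy the Neumann condition. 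Finally, writing $t^{\mu} \lesssim (t-s)^{\mu} + s^{\mu}$ inside the convolution, the factor $(t-s)^{\mu}e^{-\c(t-s)}$ is absorbed into $e^{-\c(t-s)/2}$, while $s^{\mu}\partial_0^{\mu+\beta}$ recombines into $L^{\mu}\partial_0^{\beta}$ on the compactly supported source. You allude to this $t^{\mu}$-absorption in your last paragraph, but place it after an application of Remark \ref{Ipdecay} that is not available; the correct route is to reduce to pure time derivatives \emph{before} invoking the exponential bound.
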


\begin{proof}
It follows from \eqref{spacenl4} and the definition of $L$  that
\begin{equation}\label{localest1}\begin{aligned}
&\sum\limits_{\mu+|\alpha\leq N_0+\mu_0\atop \mu\leq \mu_0}\|L^{\mu}\partial^{\alpha}\partial u(t, \cdot)\|_{L^2(|x|\leq 2)}\\[2mm]
\lesssim&\sum\limits_{\mu+\beta\leq N_0+\mu_0\atop \mu\leq\mu_0}t^{\mu}\|\partial_0^{\mu+\beta}\partial u(t, \cdot)\|_{L^{2}(|x|\leq 4)}+\sum\limits_{\mu+\beta\leq N_0+\mu_0-1\atop \mu\leq\mu_0}\|L^{\mu}\partial^{\alpha}\Box u(t, \cdot)\|_{L^2(|x|\leq 4)}.
\end{aligned}
\end{equation}
Next we manage to control the first term in the right hand side of \eqref{localest1}.
By $\partial_{\boldsymbol{\nu}}\partial_0^{\mu+\beta}u=0$ on $\Bbb R_+\times\partial\mo$, it follows from Lemma \ref{Exponetial}
that,
\begin{equation*}\begin{aligned}\label{localest2}
&\sum\limits_{\mu+\beta\leq N_0+\mu_0\atop \mu\leq \mu_0} t^{\mu}\|\partial_0^{\mu+\beta}\partial u(t, \cdot)\|_{L^2(|x|\leq 4)}\\[2mm]
\lesssim&\sum\limits_{\mu+\beta\leq N_0+\mu_0\atop \mu\leq \mu_0}t^{\mu}\int_0^{t}e^{-\c(t-s)}\|\partial_0^{\mu+\beta}\Box u(s, \cdot)\|ds\\[2mm]
\lesssim&\sum\limits_{\mu+\beta\leq N_0+\mu_0\atop \mu\leq \mu_0}\left\{\int_0^{t}e^{-\c(t-s)}(t-s)^{\mu}\|\partial_0^{\mu+\beta}\Box u(s, \cdot)\|ds+\int_0^{t}e^{-\c(t-s)}s^{\mu}\|\partial_0^{\mu+\beta}\Box u(s, \cdot)\|ds\right\}\\[2mm]
\lesssim&\sum\limits_{\mu+|\alpha|\leq N_0+\mu_0\atop \mu\leq\mu_0}\int_0^{t}e^{-\c/2(t-s)}\|L^{\mu} \partial^{\alpha}\Box u(s, \cdot)\|ds.
\end{aligned}
\end{equation*}
Combining this with \eqref{localest1} yields \eqref{locale1}.
\qquad\qquad \qquad\qquad \qquad \qquad \qquad \qquad \qquad \qquad  $\square$\end{proof}

\begin{lem}\label{Spacelocal} Suppose $\partial_{\boldsymbol{\nu}}u=0$ on $\Bbb R_+\times\partial\mo$ and $u\equiv 0$ when $t\leq 0$.
Then for fixed integers $\mu_0$ and $N_0$, we have
\begin{equation}\label{spacelo1}\begin{aligned}
&\sum\limits_{\mu+|\alpha|\leq N_0+\mu_0\atop \mu\leq \mu_0}\|L^{\mu}\partial^{\alpha}\partial u(t, \cdot)\|_{L^2(|x|\leq 2)}\\[2mm]
\lesssim &\sum\limits_{\mu+|\alpha|\leq N_0+\mu_0\atop \mu\leq \mu_0}\int_0^{t}e^{-\frac{\boldsymbol{c}}{2}(t-s)}\|L^{\mu}\partial^{\alpha}\Box u(s, \cdot)\|_{L^2(|x|\leq 4)} ds+\sum\limits_{\mu+|\alpha|\leq N_0+\mu_0-1\atop \mu\leq \mu_0}\|L^{\mu}\partial^{\alpha}\Box u(t, \cdot)\|_{L^2(|x|\leq 4)}\\[2mm]
&+\sum\limits_{\mu+|\alpha|\leq N_0+\mu_0+1\atop \mu\leq \mu_0}\int_0^{t}e^{-\frac{\boldsymbol{c}}{2}(t-s)}\left(\int_0^{s}\|L^{\mu}\partial^{\alpha}\Box u(\tau, \cdot)\|_{L^2(\{x: ||x|-(s-\tau)|<10\})}d\tau\right)ds\\[2mm]
&+\sum\limits_{\mu+|\alpha|\leq N_0+\mu_0\atop \mu\leq \mu_0}\int_0^{t}\|L^{\mu}\partial^{\alpha}
\Box u(s, \cdot)\|_{L^2(\{x:||x|-(t-s)|<10\})}ds.
\end{aligned}
\end{equation}
In addition, if $t>2$,
\begin{equation}\label{spacelo2}\begin{aligned}
&\sum\limits_{\mu+|\alpha|\leq N_0+\mu_0\atop \mu\leq\mu_0}\int_0^{t}\|L^{\mu}\partial^{\alpha} u(s, \cdot)\|_{L^2(|x|\leq 2)}ds\\
\lesssim&\sum\limits_{\mu+|\alpha|\leq N_0+\mu_0+1\atop\mu\leq\mu_0}\int_0^{t}\int_0^{s}\|L^{\mu}\partial^{\alpha}\Box u(\tau, \cdot)\|_{L^2(\{x:||x|-(s-\tau)|<10\})}d\tau ds.
\end{aligned}
\end{equation}
\end{lem}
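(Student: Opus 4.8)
\textbf{Proof proposal for Lemma~\ref{Spacelocal}.}

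The plan is to split the forcing $\Box u$ into a piece supported near $\mk$ and a piece supported away from it. The near piece is controlled by the exponential local-energy-decay estimates (Lemma~\ref{Exponetial} and Lemma~\ref{Localest1}); the far piece is compared with a solution of the \emph{free} wave equation on $\mathbb{R}^{1+3}$, for which the sharp Huygens principle in three space dimensions — equivalently, a unit-time-slab decomposition together with the energy inequality and finite speed of propagation — confines the influence on $\{|x|\le 2\}$ to a thin light-cone shell, producing exactly the shell integrals on the right of \eqref{spacelo1} and \eqref{spacelo2}. Since $L$ and the spatial derivatives $\partial_i$ do not preserve the Neumann condition, a recurrent preliminary step is the elliptic-regularity reduction already used in \eqref{spacenl4}: by Lemma~\ref{Elliptic} and $[\Box,L]=2\Box$ one bounds, on $\{|x|\le 2\}$, the quantities $\|L^\mu\partial^\alpha\partial u\|$ (and $\|L^\mu\partial^\alpha u\|$) by $\sum_{\mu'+\beta\le \mu+|\alpha|}t^{\mu'}\|\partial_0^{\mu'+\beta}\partial u\|_{L^2(|x|\le 4)}$ plus lower-order terms in $\Box u$, after which only pure time derivatives $\partial_0^k$, which do preserve the Neumann condition, remain to be estimated.

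For \eqref{spacelo1}, fix cutoffs $\chi,\theta\in C_0^\infty(\mathbb{R}^3)$ with $\chi\equiv1$ on $\{|x|\le 3\}$, $\operatorname{supp}\chi\subset\{|x|<4\}$, $\theta\equiv1$ on a neighbourhood of $\partial\mo$ and $\operatorname{supp}\theta\subset\{|x|<2\}$, and write $u=u_1+u_2$, where $u_1$ solves the Neumann problem with forcing $\chi\Box u$ and $u_2$ with forcing $(1-\chi)\Box u$, both with zero data for $t\le0$. Since $\Box u_1$ vanishes for $|x|>4$, Lemma~\ref{Localest1} applied to $u_1$, together with $\|L^\mu\partial^\alpha(\chi\Box u)\|\lesssim\sum\|L^{\mu'}\partial^{\alpha'}\Box u\|_{L^2(|x|\le4)}$, gives the first two terms on the right of \eqref{spacelo1}. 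Let $W$ solve $\Box W=(1-\chi)\Box u$ on $\mathbb{R}^{1+3}$ (the forcing extending smoothly by zero into $\mk$), $W\equiv0$ for $t\le0$; since $1-\theta$ vanishes near $\partial\mo$, the function $p:=u_2-(1-\theta)W$ satisfies $\partial_{\boldsymbol\nu}p|_{\partial\mo}=0$, $p\equiv0$ for $t\le0$, and — using $\theta(1-\chi)\equiv0$ — $\Box p=[\Box,\theta]W=2\nabla\theta\cdot\nabla W+(\Delta\theta)W$, supported in $\{3/2\le|x|<2\}$, i.e. near $\mk$. Decomposing $u=u_1+(1-\theta)W+p$ on $\{|x|\le2\}$, the contribution of $(1-\theta)W$ is $\lesssim\|L^\mu\partial^\alpha\partial W\|_{L^2(1\le|x|\le2)}+\|L^\mu\partial^\alpha W\|_{L^2(1\le|x|\le2)}$, which is bounded loss-free by the Huygens shell estimate (the first term directly; the second via Hardy's inequality, Lemma~\ref{Hardy}, after truncating the forcing of $L^\mu\partial^\alpha W$ to the relevant shell and using sharp Huygens), and this is the fourth term of \eqref{spacelo1}. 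Finally, applying Lemma~\ref{Localest1} to $p$ and estimating $\|L^\mu\partial^\alpha[\Box,\theta]W\|$ again by local $L^2$ norms of $\partial^{\le1}W$ near $\mk$, hence by the Huygens shell estimate, produces the third term of \eqref{spacelo1}, the single derivative carried by $\nabla W$ accounting for the index $N_0+\mu_0+1$ there; collecting the three contributions gives \eqref{spacelo1}.

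Estimate \eqref{spacelo2} follows the same scheme with three adjustments. First, for the near pieces $u_1$ and $p$ one uses Lemma~\ref{Exponetial}'s estimate \eqref{exponential1} (which controls $\|\,\cdot\,\|_{L^2(|x|\le4)}$ of the solution itself) in place of \eqref{exponential}, applied after reducing the $L$- and $\partial_i$-derivatives near $\partial\mo$ to $\partial_0^k$. Second, for the free piece $W$ one estimates $\|L^\mu\partial^\alpha W(s,\cdot)\|_{L^2(|x|\le2)}$ by Hardy's inequality applied to $L^\mu\partial^\alpha$ of the free wave whose forcing has first been truncated to the shell, which avoids the extra factor $s-\tau$ that the Kirchhoff representation would produce. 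Third, the $e^{-\c(t-s)}$-weighted $L^2(|x|\le4)$ integrals coming from the near pieces are absorbed into the shell integrals by combining the exponential decay with finite speed of propagation and the \emph{convexity} of $\mk$ — a signal from $\{|x|\le4\}$ reaches $\{|x|\le2\}$ after at most one reflection off $\partial\mk$, on the corresponding bounded range of $t-s$ one has $\{|x|\le4\}\subset\{||x|-(t-s)|<10\}$ — and by the outer time integration $\int_0^t ds$; after integrating in $s$ the right-hand side of \eqref{spacelo2} emerges.

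I expect the main obstacle to be the interaction of the Neumann condition with the commutator structure: neither $L$ nor the $\partial_i$ preserves it, which forces the systematic reduction to $\partial_0$-derivatives through Lemma~\ref{Elliptic} and is the origin of the $\Box u$ correction terms. A close second is arranging the $u_2=(1-\theta)W+p$ splitting so that the far forcing is \emph{exactly} annihilated in the equation for $p$ (through $\theta(1-\chi)\equiv0$) while $[\Box,\theta]$ stays first order in $W$, so that $p$ is governed by near-$\mk$ forcing and Lemma~\ref{Localest1} applies with no derivative loss beyond the one carried by $\nabla W$. The rest is technical bookkeeping: establishing the loss-free Huygens shell bound for $\partial W$ and its Hardy-based analogue for $W$, and checking that each reduction keeps the indices within the stated orders $N_0+\mu_0$ and $N_0+\mu_0+1$.
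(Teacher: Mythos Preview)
Your approach is essentially the same as the paper's: the near/far splitting of the forcing, Lemma~\ref{Localest1} for the near piece, comparison with a free-space wave for the far piece, and the Huygens shell bound for the free wave are exactly what the paper does. The only organizational difference is that the paper does \emph{not} estimate the free piece $(1-\theta)W$ directly on $\{|x|\le 2\}$; instead it observes that $\hat u:=(1-\varrho(x/2))w_0+w_1$ coincides with $u_2$ on $\{|x|\le 2\}$ and applies Lemma~\ref{Localest1} once to $\hat u$. This yields terms in $\|L^{\mu}\partial^{\alpha}w_0\|_{L^2(|x|\le 4)}$ (not $\partial w_0$), for which the Huygens shell estimate is immediate. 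Your route requires, in addition, the loss-free Huygens bound for $\partial W$ itself, which you correctly justify by truncating the forcing to the shell and using energy plus finite speed; it works, but the paper's packaging avoids this extra step.

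For \eqref{spacelo2} the paper simply says it follows from \eqref{spacelo1} (read: the same proof, now using \eqref{exponential1} for the solution itself, then integrating in $t$ and applying Young's inequality to the exponential kernels). Your argument is fine, but the invocation of convexity of $\mk$ and ``at most one reflection'' is superfluous: the absorption of the near-piece term $\int_0^{t}\|L^{\mu}\partial^{\alpha}\Box u(\tau,\cdot)\|_{L^2(|x|\le 4)}\,d\tau$ into the double shell integral on the right of \eqref{spacelo2} is a pure measure-theoretic fact, since $\{|x|\le 4\}\subset\{\,||x|-(s-\tau)|<10\,\}$ whenever $0\le s-\tau\le 6$, and for each fixed $\tau$ this contributes an $s$-interval of bounded length.
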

\begin{proof} It suffices only to prove \eqref{spacelo1} since \eqref{spacelo2} directly comes from \eqref{spacelo1}.

Assume that $u$ satisfies
\begin{equation*}\label{L0}\begin{cases}
\Box u=G(t,x), \ \ (t, x)\in \mathbb{R}_{+}\times\mo,\\[2mm]
\partial_{\boldsymbol{\nu}}u=0,\ \ (t, x)\in\mathbb{R}_{+}\times\partial\mo,\\[2mm]
u(t, x)\equiv 0,\ \ t\leq 0.
\end{cases}
\end{equation*}
Let $u_i\ (i=1, 2)$ satisfy
\begin{equation*}\label{L1}\begin{cases}
\Box u_i=(2-i+(2i-3)\varrho(x/2))G(t,x), \ \ (t, x)\in \mathbb{R}_{+}\times\mo,\\[2mm]
\partial_{\boldsymbol{\nu}}u_i=0,\ \ (t, x)\in\mathbb{R}_{+}\times\partial\mo,\\[2mm]
u_i(t, x)\equiv 0,\ \ t\leq 0.
\end{cases}
\end{equation*}
Then $u=u_1+u_2$. It follows from Lemma \ref{Localest1} that
\eqref{spacelo1} holds for $u_1$. Next we estimate $u_2$. For this purpose,
we set $u_2=w_0+w_1$ where $w_0$ solves the following problem
\begin{equation}\label{spacelo4}\begin{cases}
\Box w_0=\Box u_2,\ \ |x|\geq 2;\qquad  0,\ \ \text{otherwise},\\[2mm]
w_0(t, x)\equiv 0,\ \ t\leq 0.
\end{cases}
\end{equation}
In addition, set $\hat u=(1-\varrho(x/2))w_0+w_1$. Then $\Box\hat u$ is supported in $\{|x|\leq 4\}$ due to
\begin{equation*}
\Box\hat u=\nabla\varrho(x/2)\cdot\nabla w_0+2\Delta\varrho(x/2)w_0.
\end{equation*}
Consequently, as in the treatment on $u_1$, by Lemma \ref{Localest1} and the fact that $u_2=\hat u$ when $|x|\leq 2$, one has that
\begin{equation}\label{spacelo3}\begin{aligned}
&\sum\limits_{\mu+|\alpha|\leq N_0+\mu_0\atop \mu\leq\mu_0}\|L^{\mu}\partial^{\alpha}\partial u_2(t, \cdot)\|_{L^2(|x\leq 2)}\\[2mm]
=&\sum\limits_{\mu+|\alpha|\leq N_0+\mu_0\atop \mu\leq\mu_0}\|L^{\mu}\partial^{\alpha}\partial \hat u(t, \cdot)\|_{L^2(|x\leq 2)}\\[2mm]
\lesssim&\sum\limits_{\mu+|\alpha|\leq N_0+\mu_0\atop\mu\leq\mu_0}\int_0^{t}e^{-\frac{\boldsymbol{c}}{2}(t-s)}\|L^{\mu}\partial^{\alpha}\Box\hat u(s, \cdot)\|ds+\sum\limits_{\mu+|\alpha|\leq N_0+\mu_0-1\atop\mu\leq \mu_0}\|L^{\mu}\partial^{\alpha}\Box\hat u(t, \cdot)\|\\[2mm]
\lesssim& \sum\limits_{\mu+|\alpha|\leq N_0+\mu_0+1\atop\mu\leq\mu_0}\int_0^{t}e^{-\frac{\boldsymbol{c}}{2}(t-s)}\|L^{\mu}\partial^{\alpha}w_0(s, \cdot)\|_{L^2(|x|\leq 4)}ds+\sum\limits_{\mu+|\alpha|\leq N_0+\mu_0\atop\mu\leq\mu_0}\|L^{\mu}\partial^{\alpha}w_0(t, \cdot)\|_{L^2(|x|\leq 4)}.
\end{aligned}
\end{equation}
Since $w_0$ satisfies the boundaryless problem \eqref{spacelo4}, one has that for any $\mu$ and $\alpha$,
\begin{equation*}\begin{aligned}
\|L^{\mu}\partial^{\alpha}w_0(t, \cdot)\|_{L^2(|x|\leq 4)}&\lesssim \sum\limits_{\nu+|\beta|\leq \mu+|\alpha|;\ \nu\leq \mu}\int_0^{t}\|L^{\nu}\partial^{\beta}\Box w_0(\tau, \cdot)\|_{L^2(\{x: ||x|-(t-\tau)|<10\})}d\tau\\[2mm]
&\lesssim \sum\limits_{\nu+|\beta|\leq \mu+|\alpha|;\ \nu\leq \mu}\int_0^{t}\|L^{\nu}\partial^{\beta}\Box u(\tau, \cdot)\|_{L^2(\{x: ||x|-(t-\tau)|<10\})}d\tau.
\end{aligned}
\end{equation*}
Combining this with \eqref{spacelo3} and the estimate of $u_1$ yields \eqref{spacelo1}. And then we complete the proof of Lemma \ref{Spacelocal}.
\qquad\qquad \qquad\qquad \qquad \qquad \qquad \qquad \qquad \qquad \qquad
\qquad \qquad \qquad \qquad \qquad  $\square$\end{proof}

\subsection{Time-space energy estimates}

In this subsection, some key weighted $L^2$ time-space energy estimates, which can be thought as the modified
version of Keel-Smith-Sogge estimate in \cite{KSS4}, are established for the solution
$u$ of \eqref{VLNV}. Denote by $S_T=[0, T]\times\mo$ for any $T>0$.

\begin{prop}\label{MKSS} Suppose that the smooth function $u$ admits $\partial_{\boldsymbol{\nu}}u=0$ on $\Bbb R_+\times\partial\mo$
and $u\equiv 0$ when $t\leq 0$. Then for fix integers $N_0$ and $\mu_0$, we have
\begin{equation}\label{KKspace}\begin{aligned}
&(\ln(2+T))^{-1/2}\sum\limits_{\mu+|\alpha|\leq N_0+\mu_0\atop \mu\leq \mu_0}\|\left<x\right>^{-1/2}L^{\mu}\partial^{\alpha}\partial u\|_{L^2(S_T)}\\[2mm]
\lesssim& \int_0^{T}\sum\limits_{\mu+|\alpha|\leq N_0+\mu_0+1\atop \mu\leq \mu_0}\|\Box L^{\mu}\partial^{\alpha} u(s, \cdot)\| ds+\sum\limits_{\mu+|\alpha|\leq N_0+\mu_0\atop \mu\leq \mu_0}\|\Box L^{\mu}\partial^{\alpha} u\|_{L^2(S_T)}.
\end{aligned}
\end{equation}
 Additionally,
 \begin{equation}\label{KKrotation}\begin{aligned}
&(\ln(2+T))^{-1/2}\sum\limits_{\mu+|\alpha|\leq N_0+\mu_0\atop \mu\leq \mu_0}\|\left<x\right>^{-1/2}L^{\mu} Z^{\alpha}\partial u\|_{L^2(S_T)}\\[2mm]
\lesssim& \int_0^{T}\sum\limits_{\mu+|\alpha|\leq N_0+\mu_0+1\atop \mu\leq \mu_0}\|\Box L^{\mu} Z^{\alpha} u(s, \cdot)\| ds+\sum\limits_{\mu+|\alpha|\leq N_0+\mu_0\atop \mu\leq \mu_0}\|\Box L^{\mu} Z^{\alpha} u\|_{L^2(S_T)}.
\end{aligned}
\end{equation}

\end{prop}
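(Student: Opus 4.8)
The plan is to combine the Keel--Smith--Sogge weighted $L^2$ time-space estimate in its boundaryless form (Lemma \ref{Weighted}) with the local energy estimates already established in this section, using a cutoff to split the solution into a near-boundary piece and a far piece. First I would fix a smooth cutoff $\chi(x)$ with $\chi\equiv 1$ for $|x|\le 2$ and $\chi\equiv 0$ for $|x|\ge 4$, and write $u=\chi u+(1-\chi)u$. For the far piece $(1-\chi)u$, the function is supported in $\{|x|\ge 2\}$, hence it solves a wave equation on all of $\mathbb{R}^3$ with source
$$\Box\big((1-\chi)u\big)=(1-\chi)\Box u-2\nabla\chi\cdot\nabla u-(\Delta\chi)u,$$
and the commutator terms $\nabla\chi\cdot\nabla u$ and $(\Delta\chi)u$ are supported in the compact annulus $\{2\le|x|\le 4\}$. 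Since $L$ and $Z$ are the admissible vector fields for the flat d'Alembertian ($[\Box,L]=2\Box$, $[\Box,Z]$ is either $0$ or again of the form already handled), applying $L^\mu\partial^\alpha$ (resp.\ $L^\mu Z^\alpha$) and invoking Lemma \ref{Weighted} gives the weighted norm of $(1-\chi)u$ controlled by $\int_0^T\|\Box L^\mu\partial^\alpha u(s,\cdot)\|\,ds$ plus weighted-$L^2$ norms of the annulus commutator terms, the latter being exactly local-in-space quantities of the form $\|L^\mu\partial^\alpha\partial u\|_{L^2(|x|\le 4)}$ and $\|L^\mu\partial^\alpha u\|_{L^2(|x|\le 4)}$.

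Next I would handle the near-boundary piece $\chi u$, which is supported in $\{|x|\le 4\}$ and therefore contributes to the left side of \eqref{KKspace}--\eqref{KKrotation} only through the factor $\left<x\right>^{-1/2}\sim 1$ there; thus its contribution is simply $\|\chi L^\mu\partial^\alpha\partial u\|_{L^2(S_T)}\lesssim\|L^\mu\partial^\alpha\partial u\|_{L^2([0,T]\times(|x|\le 4))}$, together with the lower-order terms coming from commuting $\chi$ past $\partial$. So everything is reduced to bounding the local space-time $L^2$ norm $\sum\|L^\mu\partial^\alpha\partial u\|_{L^2([0,T]\times(|x|\le 4))}$ (and similarly with $Z$) by the right-hand side of the proposition. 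For this I would integrate the pointwise-in-time local estimate \eqref{spacelo1} of Lemma \ref{Spacelocal} over $[0,T]$: the $\int_0^t e^{-\frac{\c}{2}(t-s)}(\cdots)\,ds$ convolution structure, after an interchange of integrals (Young/Minkowski), turns $\int_0^T\!\int_0^t e^{-\frac{\c}{2}(t-s)}\|\Box L^\mu\partial^\alpha u(s,\cdot)\|\,ds\,dt$ into $\lesssim\int_0^T\|\Box L^\mu\partial^\alpha u(s,\cdot)\|\,ds$; the retarded terms $\|L^\mu\partial^\alpha\Box u(\tau,\cdot)\|_{L^2(||x|-(s-\tau)|<10)}$ integrated in $\tau$ and then in $s$ produce, by an $L^1_tL^2_x\to L^2_{t,x}$ slab argument (disjointness of the shells $\{||x|-(s-\tau)|<10\}$ in $\tau$ for fixed $s$), a bound by $\|\Box L^\mu\partial^\alpha u\|_{L^2(S_T)}$; and the remaining non-integrated term $\|L^\mu\partial^\alpha\Box u(t,\cdot)\|_{L^2(|x|\le 4)}$ integrates trivially to $\int_0^T\|\Box L^\mu\partial^\alpha u\|\,ds$. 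Using $\partial=\frac1t(L-\varrho x^i\partial_i)$ type identities, or just the fact that $\partial u$ is among the $L^\mu\partial^\alpha u$ with one more $\partial$, one absorbs the factor $\left<x\right>^{-1/2}$ harmlessly on $\{|x|\le 4\}$.

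Finally I would assemble the two pieces: the far-piece bound already has the $(\ln(2+T))^{-1/2}$ weight and the right-hand-side structure of the proposition, while the near-piece local norm, estimated as above, is dominated (even without the logarithmic loss) by $\int_0^T\sum\|\Box L^\mu\partial^\alpha u(s,\cdot)\|\,ds+\sum\|\Box L^\mu\partial^\alpha u\|_{L^2(S_T)}$; the annulus commutator terms fed into Lemma \ref{Weighted} for the far piece are themselves of this same local type and are reabsorbed. The estimate \eqref{KKrotation} is proved identically, replacing $\partial^\alpha$ by $Z^\alpha$ and noting that $Z$ commutes with $\Box$ up to lower-order admissible terms and that $Z$ is tangent enough near $|x|\le 4$ for the cutoff manipulations to go through (the rotations $\Omega$ are tangent to the spheres, and $\partial_\alpha$ commutes with $\chi$ up to a compactly supported factor). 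The main obstacle I expect is the bookkeeping in the slab/shell argument that converts the retarded local norms in \eqref{spacelo1} into the clean $L^2(S_T)$ term on the right of \eqref{KKspace}: one must check carefully that the number of overlapping shells $\{||x|-(s-\tau)|<10\}$ contributing at a given $(x,s)$ is bounded independently of $T$, so that Cauchy--Schwarz in $\tau$ followed by integration in $s$ does not lose a power of $T$ — this is where the finite width $10$ of the shells and the positivity of the exponential kernel are used.
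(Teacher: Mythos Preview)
Your cutoff strategy is not how the paper proceeds, and as written it has a real gap. The paper does \emph{not} apply Lemma~\ref{Weighted} to $(1-\chi)u$; instead it first derives a global $(1+T)^{-1/2}$ bound on $\|\partial L^{\mu_0}Z^{\alpha}u\|_{L^2(S_T)}$ (via the energy identity in Lemma~\ref{Rotationnl} with $h\equiv 0$, followed by a short max--time argument comparing $\int_0^t\|\Box\,\|\cdot\|\partial\,\|$ with the boundary contribution), pairs this with the local $L^2([0,T]\times\{|x|\le 2\})$ bound from Lemma~\ref{Localest2}, and then invokes the \emph{scaling method} of Keel--Smith--Sogge~\cite{KSS4} to interpolate these two ingredients into the weighted $(\ln(2+T))^{-1/2}\langle x\rangle^{-1/2}$ estimate. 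The scaling step is the main mechanism here, and it is precisely what bypasses the difficulty your approach runs into.

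The gap in your argument is the far--piece commutator. When you apply Lemma~\ref{Weighted} to $v=(1-\chi)u$, the right-hand side is an $L^1$-in-time norm $\int_0^T\|\Box L^{\mu}\partial^{\alpha}v(s,\cdot)\|\,ds$, and the commutator $[\Box,\chi]u$ forces terms of the form $\int_0^T\|L^{\mu}\partial^{\alpha'}u(s,\cdot)\|_{L^2(2\le|x|\le 4)}\,ds$ and $\int_0^T\|L^{\mu}\partial^{\alpha'}\partial u(s,\cdot)\|_{L^2(2\le|x|\le 4)}\,ds$. None of the local lemmas in this section control an $L^1_t$ local norm of this type by the right-hand side of \eqref{KKspace}: Lemma~\ref{Localest2} only gives the $L^2_t$ local norm, and if you instead integrate \eqref{spacelo1} in $L^1_t$, the retarded shell terms $\int_0^t\|L^{\mu}\partial^{\alpha}\Box u(s,\cdot)\|_{L^2(||x|-(t-s)|<10)}\,ds$ cost a factor $T^{1/2}$ after Cauchy--Schwarz in $s$ (your ``disjointness of the shells for fixed $s$'' observation gives a clean bound for the $L^2_t$ norm of this term, not for its $L^1_t$ norm). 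Thus ``reabsorbing'' these commutator terms fails: the left side of \eqref{KKspace} is a weighted $L^2_{t,x}$ quantity and cannot swallow an $L^1_t$ local integral. Separately, your treatment of the near piece also mixes $L^1_t$ and $L^2_t$ (you need the latter for its contribution to the left side, and this \emph{is} exactly Lemma~\ref{Localest2}), but that part is easily repaired; the commutator issue is not, without either the scaling method or a version of the KSS estimate with an $L^2_{t,x}$ source norm that the paper does not supply.
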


In order to prove Proposition \ref{MKSS}, we shall establish the next two lemmas in advance. The first one can be looked as a simple version of Proposition \ref{MKSS} and the second one is the local-in-space $L^2-$time-space energy estimates.

\begin{lem}\label{Decaynl} If the smooth function $u$ admits $\partial_{\boldsymbol{\nu}}u=0$ on $\Bbb R_+\times\partial\mo$
and $u\equiv 0$ when $t\leq 0$, then for any nonnegative integer $\beta$,
\begin{equation}\label{decaynl5}
(\ln(2+T))^{-1/2}\|\left<x\right>^{-1/2}\partial\partial_0^{\beta}u\|_{L^2(S_T)}+\|\partial\partial_0^{\beta}u\|_{L^2([0, T]\times \{|x|\leq 4\})}\lesssim \int_0^{T}\|\Box\partial_0^{\beta}u(s, \cdot)\|ds.
\end{equation}
\end{lem}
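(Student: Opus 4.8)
\textbf{Proof proposal for Lemma \ref{Decaynl}.}

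The plan is to reduce \eqref{decaynl5} to the boundaryless Keel--Smith--Sogge estimate of Lemma \ref{Weighted} by cutting the solution into a near-obstacle part and a far part, handling the near part via the exponential local energy decay of Lemma \ref{Exponetial}. First I would note that it suffices to treat $\beta=0$: since $\partial_{\boldsymbol{\nu}}\partial_0^{\beta}u=0$ on $\mathbb{R}_{+}\times\partial\mo$ and $\Box$ commutes with $\partial_0$, replacing $u$ by $\partial_0^{\beta}u$ reduces the general case to the $\beta=0$ case with $\Box u$ replaced by $\Box\partial_0^{\beta}u=\partial_0^{\beta}\Box u$. So from now on I consider $u$ with $\partial_{\boldsymbol\nu}u=0$, $u\equiv 0$ for $t\le 0$, and write $G=\Box u$.

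Next I would introduce the cutoff $\varrho$ from the end of Section~1 (equal to $1$ for $|x|\ge 3/2$, $0$ for $|x|\le 1$) and split $G = (1-\varrho(x))G + \varrho(x)G =: G_{\mathrm{near}}+G_{\mathrm{far}}$, letting $u_{\mathrm{near}},u_{\mathrm{far}}$ solve the corresponding Neumann problems \eqref{NW} with right-hand sides $G_{\mathrm{near}},G_{\mathrm{far}}$; then $u=u_{\mathrm{near}}+u_{\mathrm{far}}$. For $u_{\mathrm{near}}$, whose source is supported in $\{|x|\le 3/2\}$, Lemma \ref{Exponetial} (combined with Remark \ref{Ipdecay} or \eqref{decayt1} for $k=0$) gives $\|\partial u_{\mathrm{near}}(t,\cdot)\|_{L^2(|x|\le 5)}\lesssim \int_0^t e^{-\c(t-s)}\|G(s,\cdot)\|\,ds$, and Young's convolution inequality in $t$ yields $\|\partial u_{\mathrm{near}}\|_{L^2([0,T]\times\{|x|\le 5\})}\lesssim \int_0^T\|G(s,\cdot)\|\,ds$; away from the obstacle, $u_{\mathrm{near}}$ solves a boundaryless wave equation with compactly supported source, so Lemma \ref{Weighted} (applied with $\mu=\alpha=0$, after extending by zero into $\mk$) controls $(\ln(2+T))^{-1/2}\|\left<x\right>^{-1/2}\partial u_{\mathrm{near}}\|_{L^2(S_T)}$ by the same integral. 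For $u_{\mathrm{far}}$, I would set $\hat u=(1-\varrho(x))w_0+w_1$ exactly as in the proof of Lemma \ref{Spacelocal}: let $w_0$ solve the boundaryless equation $\Box w_0 = \mathbf 1_{|x|\ge 1}G_{\mathrm{far}} = G_{\mathrm{far}}$ and $w_1=u_{\mathrm{far}}-w_0$ near the obstacle, so that $\Box\hat u$ is supported in $\{|x|\le 2\}$ and $\hat u = u_{\mathrm{far}}$ for $|x|\le 1$. Then $w_0$ is handled directly by Lemma \ref{Weighted}, and the correction $\hat u$, having a source supported near the obstacle (bounded pointwise by $|\partial w_0|+|w_0|$ there, hence by $\int_0^t\|G\|\,ds$ after Lemma \ref{Weighted}'s local estimate \eqref{weighted1} applied to $w_0$), is again controlled by Lemma \ref{Exponetial} plus Young's inequality for its local-in-space part and by Lemma \ref{Weighted} for its weighted global part.

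Finally, combining the three contributions and absorbing the local-in-space piece $\|\partial\partial_0^{\beta}u\|_{L^2([0,T]\times\{|x|\le 4\})}$ into the near-obstacle estimate (note $\{|x|\le 4\}\subset\{|x|\le 5\}$, where the exponential decay applies) produces \eqref{decaynl5}. The main obstacle I anticipate is the bookkeeping around the correction term $\hat u$: one must verify that $\Box\hat u = \nabla\varrho\cdot\nabla w_0 + 2\Delta\varrho\, w_0$ is genuinely supported in the annulus where $\varrho$ is non-constant, that the contribution of $w_0$ there can be estimated in $L^1_t L^2_x$ by $\int_0^T\|G\|\,ds$ using the local Keel--Smith--Sogge bound \eqref{weighted1}, and that extending solutions by zero across $\partial\mo$ is legitimate for the purpose of applying the $\mathbb{R}^3$-estimate of Lemma \ref{Weighted} (this is fine since only the behavior of $u_{\mathrm{near}}$, $w_0$ away from $\mk$ enters there, and those are honest solutions of boundaryless equations). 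The rest is routine convolution estimates.
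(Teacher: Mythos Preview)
Your handling of the local-in-space piece $\|\partial u\|_{L^2([0,T]\times\{|x|\le 4\})}$ follows the paper's own argument (the $u_1/u_2$ decomposition, Lemma~\ref{Exponetial} plus Young for the near source, the $\hat u$ construction and \eqref{weighted1} for the far source) and is fine. The gap is in the \emph{weighted global} term $(\ln(2+T))^{-1/2}\|\langle x\rangle^{-1/2}\partial u\|_{L^2(S_T)}$. You propose to apply Lemma~\ref{Weighted} directly to $u_{\mathrm{near}}$ and to the correction $\hat u$, ``after extending by zero into $\mk$''. That is not legitimate: $u_{\mathrm{near}}$ and $\hat u$ are solutions of the Neumann problem in $\mo$, not of the free wave equation in $\mathbb{R}^3$; the zero extension is not smooth across $\partial\mo$ and $\Box$ of the extension picks up a surface distribution there, so the hypothesis of Lemma~\ref{Weighted} fails. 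Similarly, in your decomposition $u_{\mathrm{far}}=w_0+w_1$, the remainder $w_1$ is not localized near the obstacle (it solves $\Box w_1=0$ in $\mo$ with inhomogeneous Neumann data $\partial_{\boldsymbol\nu}w_1=-\partial_{\boldsymbol\nu}w_0$), so you still owe a global weighted estimate for a genuine boundary-value problem, and the argument becomes circular.

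The paper avoids this by not applying Lemma~\ref{Weighted} to the exterior pieces at all. Instead it first gets the cruder global bound
\[
(1+T)^{-1/2}\|\partial u\|_{L^2(S_T)}\ \lesssim\ \int_0^{T}\|\Box u(s,\cdot)\|\,ds
\]
directly from the standard energy inequality (Lemma~\ref{Timenl}), which \emph{does} hold for the Neumann problem. That crude estimate, together with the local-in-space bound $\|\partial u\|_{L^2([0,T]\times\{|x|\le 4\})}\lesssim\int_0^T\|\Box u\|$ (your argument), is then fed into the Keel--Smith--Sogge scaling argument (as in Lemma~2.3 of \cite{KSS4}) to upgrade the pair to the $(\ln(2+T))^{-1/2}\langle x\rangle^{-1/2}$ weighted estimate. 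The scaling/interpolation step is the missing ingredient in your proposal; Lemma~\ref{Weighted} on its own is only used for honest boundaryless pieces like $w_0$ and only locally in space via \eqref{weighted1}.
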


\begin{proof}
We only need to prove \eqref{decaynl5} for $\beta=0$ since other cases can
be proved in the same way due to $\partial_{\boldsymbol{\nu}}\partial_0^{i}u=0$ on $\mathbb{R}_{+}\times\partial\mo$ for $0\leq i\leq \beta$.
It follows from the energy estimate in Lemma \ref{Timenl} that
\begin{equation*}
\|\partial u(t, \cdot)\|\leq 2\int_0^{t}\|\Box u(s, \cdot)\| ds.
\end{equation*}
This immediately implies
\begin{equation}\label{decaynl2}
(1+T)^{-1/2}\|\partial u\|_{L^2(S_T)}\lesssim \int_0^{T}\|\Box u(s, \cdot)\|ds.
\end{equation}

We continue to use the notation of $u_i\ (i=1, 2)$ defined in the proof of Lemma \ref{Spacelocal}. With respect to $u_1$, by using \eqref{exponential} in Lemma \ref{Exponetial}, one has
\begin{equation}\label{decaynl4}\begin{aligned}
\|\partial u_1\|_{L^2([0, T]\times\{|x|\leq 4\})}&\lesssim \|\int_0^{t}e^{-\c(t-s)}\|\Box u(s, \cdot)\|ds\|_{L^2([0, T])}\\[2mm]
&\lesssim \int_0^{T}\|\Box u(s, \cdot)\|ds,
\end{aligned}
\end{equation}
where the last inequality comes from the Young inequality.

When it comes to $u_2$, based on the estimate \eqref{spacelo3} in Lemma \ref{Spacelocal} and \eqref{weighted1} in Lemma \ref{Weighted}, one has
\begin{equation*}
\|\partial u_2\|_{L^2([0, T]\times\{|x|\leq 4\})}\lesssim \int_0^{T}\|\Box u(s, \cdot)\|ds.
\end{equation*}
Combining this with \eqref{decaynl4} yields
\begin{equation}\label{decaynl3}
\|\partial u\|_{L^2([0, T]\times \{|x|\leq 4\})}\lesssim \int_0^{T}\|\Box u(s, \cdot)\|ds.
\end{equation}
Based on  \eqref{decaynl2} and \eqref{decaynl3}, by the scaling method as in the proof of Lemma 2.3 in \cite{KSS4},
we can get \eqref{decaynl5} for $\beta=0$ and then the proof of Lemma \ref{Decaynl} is completed.
\end{proof}

\begin{lem}\label{Localest2} Under the assumptions in Proposition \ref{MKSS}, we have
\begin{equation}\label{localest3}\begin{aligned}
&\sum\limits_{\mu+|\alpha|\leq N_0+\mu_0\atop \mu\leq \mu_0}\|L^{\mu}\partial^{\alpha}\partial u\|_{L^2([0, T]\times\{|x|\leq 2\})}\\[2mm]
\lesssim&\sum\limits_{\mu+|\alpha|\leq N_0+\mu_0-1\atop \mu\leq\mu_0}\|\Box L^{\mu}\partial^{\alpha}u\|_{L^2(S_T)}+\int_0^{T}\sum\limits_{\mu+|\alpha|\leq N_0+\mu_0\atop \mu\leq \mu_0}\|\Box L^{\mu}\partial^{\alpha} u(s, \cdot)\| ds.
\end{aligned}
\end{equation}
\end{lem}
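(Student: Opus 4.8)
\textbf{Proof plan for Lemma \ref{Localest2}.}
The plan is to reduce the local-in-space time-space estimate to the pointwise-in-time estimates already established, most notably \eqref{spacelo1} in Lemma \ref{Spacelocal}, and then integrate in time. First I would split $u=u_1+u_2$ using the cutoff decomposition of the source term $\Box u$ into a part supported in $\{|x|\le 4\}$ and a part supported in $\{|x|\ge 2\}$, exactly as in the proof of Lemma \ref{Spacelocal}; here we of course apply this to each $L^\mu\partial^\alpha u$ in the range $\mu+|\alpha|\le N_0+\mu_0$, $\mu\le\mu_0$, exploiting that $[\Box,L]=2\Box$ so the commutators only reshuffle the same family of derivatives with coefficients that are constants or cutoffs.

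For the compactly-supported piece $u_1$, I would invoke \eqref{spacelo1} together with the exponential-in-time kernel $e^{-\frac{\c}{2}(t-s)}$ appearing there; taking the $L^2([0,T])$ norm in $t$ and applying Young's convolution inequality converts the exponentially-weighted time integrals into plain time integrals $\int_0^T\|\Box L^\mu\partial^\alpha u(s,\cdot)\|\,ds$, while the ``loss-free'' term $\sum\|L^\mu\partial^\alpha\Box u(t,\cdot)\|_{L^2(|x|\le 4)}$ gives the $L^2(S_T)$ norm of $\Box L^\mu\partial^\alpha u$ on the right side of \eqref{localest3}. For the piece $u_2$ supported away from the obstacle, after the further decomposition $u_2=w_0+w_1$ and $\hat u=(1-\varrho(x/2))w_0+w_1$ as in Lemma \ref{Spacelocal}, one reduces $w_0$ to a boundaryless wave equation and controls its local energy by the Keel--Smith--Sogge time-space estimate \eqref{weighted1} of Lemma \ref{Weighted} (and by the finite-speed-of-propagation $L^2$ bounds over the light-cone shells $\{||x|-(t-\tau)|<10\}$, which after integration in $t$ and using $\int_0^T\int_0^s\cdots$ collapse back into $\int_0^T\|\Box L^\mu\partial^\alpha u(s,\cdot)\|\,ds$ up to one extra derivative that is absorbed because $N_0,\mu_0$ are arbitrary fixed). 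The truncation commutator $\Box\hat u=\nabla\varrho(x/2)\cdot\nabla w_0+2\Delta\varrho(x/2)w_0$ is supported in the annulus $\{2\le|x|\le 4\}$ and involves only lower-order quantities of $w_0$, so it feeds back into the already-estimated terms.

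The main obstacle I expect is bookkeeping the derivative counts across the commutators $[\Box,L^\mu]$ and the cutoff operators so that nothing escapes the range $\mu+|\alpha|\le N_0+\mu_0$, $\mu\le\mu_0$ on the left while the right side stays within $\mu+|\alpha|\le N_0+\mu_0$ (with only a harmless ``$-1$'' in the no-loss term); the shell terms in \eqref{spacelo1} nominally cost one extra angular/spatial derivative, and one must check that after applying \eqref{weighted1} to $w_0$ these are genuinely controlled rather than merely shifted. A secondary technical point is justifying the interchange of the $t$-integration with Young's inequality for the exponential kernels and verifying that all the intermediate objects vanish for $t\le 0$ and decay fast in $x$, so that Lemmas \ref{Exponetial}, \ref{Weighted}, and \ref{Spacelocal} apply verbatim. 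Once these are in place, \eqref{localest3} follows by summing the $u_1$ and $u_2$ contributions.
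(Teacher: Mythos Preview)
Your plan is essentially the paper's proof: the same $u=u_1+u_2$ decomposition from Lemma \ref{Spacelocal}, Young's inequality on the exponential kernel for the compactly supported piece, and the Keel--Smith--Sogge estimate \eqref{weighted1} applied to the boundaryless solution $w_0$ for the far piece. The one simplification in the paper is that it invokes \eqref{locale1} (Lemma \ref{Localest1}) directly on $u_1$ and on $\hat u$ rather than the full statement \eqref{spacelo1}, so the light-cone shell terms never enter and your derivative-bookkeeping worry does not arise.
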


\begin{proof} We use some notations in the proof of Lemma \ref{Spacelocal}. The estimate of $u_1$ in \eqref{localest3} comes from \eqref{locale1} by integrating the variable $t$ from $0$ to $T$ on both sides of \eqref{locale1} and by applying Young inequality.

For $u_2$, one has
\begin{equation}\label{3.1}\begin{aligned}
&\sum\limits_{\mu+|\alpha|\leq N_0+\mu_0\atop \mu\leq\mu_0}\|L^{\mu}\partial^{\alpha}\partial u\|_{L^2([0, T]\times (|x|\leq 2))}\\[2mm]
=&\sum\limits_{\mu+|\alpha|\leq N_0+\mu_0\atop \mu\leq\mu_0}\|L^{\mu}\partial^{\alpha}\partial \hat u\|_{L^2([0, T]\times (|x|\leq 2))}\\[2mm]
\lesssim&\sum\limits_{\mu+|\alpha|\leq N_0+\mu_0\atop \mu\leq\mu_0}\|\Box L^{\mu}\partial^{\alpha}\hat u\|_{L^2(S_T)}\\[2mm]
\lesssim&\sum\limits_{\mu+|\alpha|\leq N_0+\mu_0+1\atop \mu\leq\mu_0}\|L^{\mu}\partial^{\alpha} w_0\|_{L^2([0, T]\times \{|x|\leq 4\})}.
\end{aligned}
\end{equation}
Here the first inequality in \eqref{3.1} comes from the application  of Lemma \ref{Localest1} and Young's inequality.
Thus, it follows from \eqref{3.1} and estimate \eqref{weighted1} in Lemma \ref{Weighted} to $w_0$ that \eqref{localest3}
is proved.\quad  $\square$\end{proof}

We now start the proof of Proposition \ref{MKSS}.

\vskip 0.2 true cm

{\bf Proof of Proposition \ref{MKSS}:} Similar to \eqref{decaynl2} and \eqref{decaynl3} in Lemma \ref{Decaynl}, we have 
that for $0\leq j\leq N_0$,
\begin{equation*}\label{mkss1}
(1+t)^{-1/2}\|\partial\partial_0^j u\|_{L^2(S_t)}\lesssim \int_0^{t}\|\Box \partial_0^j u(s, \cdot)\|_2 ds
\end{equation*}
and
\begin{equation*}\label{mkss2}
\|\partial\partial_0^j u\|_{L^2([0, t]\times\{|x|\leq 4\})}\lesssim\int_0^{t}\|\Box\partial_0^j u(s, \cdot)\|_2 ds.
\end{equation*}
In addition, due to the elliptic estimate in Lemma \ref{Elliptic}, we have
\begin{equation}\label{mkss3}\begin{aligned}
&(1+t)^{-1/2}\sum\limits_{|\alpha|\leq N_0}\|\partial^{\alpha}\partial u\|_{L^2(S_t)}\\[2mm]
\lesssim& (1+t)^{-1/2}\sum\limits_{|\alpha|\leq N_0-1}\|\Box\partial u\|_{L^2(S_t)}
+\sum\limits_{|\alpha|\leq N_0}\int_0^{t}\|\Box\partial^{\alpha} u(s, \cdot)\|_2 ds
\end{aligned}
\end{equation}
and
\begin{equation}\label{mkss4}\begin{aligned}
&\sum\limits_{|\alpha|\leq N_0}\|\partial^{\alpha}\partial u\|_{L^2([0, t]\times\{|x|\leq 2\})}\\[2mm]
\lesssim&\sum\limits_{|\alpha|\leq N_0-1}\|\Box\partial^{\alpha} u\|_{L^2([0, t]\times\{|x|\leq 4\})}+\sum\limits_{|\alpha|\leq N_0}\int_0^{t}\|\Box\partial^{\alpha} u(s, \cdot)\|_2 ds.
\end{aligned}
\end{equation}
Together with the scaling techniques in \cite{KSS4}, it follows from \eqref{mkss3} and \eqref{mkss4} that
\begin{equation}\label{mkss5}\begin{aligned}
&(\ln(2+T))^{-1/2}\sum\limits_{|\alpha|\leq N_0}\|\left<x\right>^{-1/2}\partial^{\alpha}\partial u\|_{L^2(S_{T})}\\[2mm]
\lesssim&\sum\limits_{|\alpha|\leq N_0-1}\|\Box\partial^{\alpha} u\|_{L^2(S_T)}+\sum\limits_{|\alpha|\leq N_0}\int_0^{T}\|\Box\partial^{\alpha}u(s, \cdot)\|ds.
\end{aligned}
\end{equation}
On the other hand,  it follows from Sobolev trace theorem and Lemma \ref{Rotationnl} with $h^{\beta\gamma}=0$ that for fixed $\alpha\in\Bbb N_0$,
\begin{equation}\label{mkss6}\begin{aligned}
\|\partial L^{\mu_0} Z^{\alpha}u(t, \cdot)\|^2
\lesssim&\int_0^{t}\|\Box L^{\mu_0} Z^{\alpha} u(s, \cdot)\|\cdot\|\partial L^{\mu_0} Z^{\alpha} u(s, \cdot)\| ds\\[2mm]
&+\sum\limits_{|\beta|\leq 1}\int_0^{t}\|L^{\mu_0}\partial^{\alpha+\beta}\partial u(s, \cdot)\|_{L^2(|x|\leq 2)}^2 ds.
\end{aligned}
\end{equation}
For any fixed $T>0$, there exists a maximum $t_0\in [0, T]$ such that for $0\leq t\leq t_0$,
\begin{equation*}\int_0^{t}\|\Box L^{\mu_0}Z^{\alpha} u(s, \cdot)\|\cdot\|\partial L^{\mu_0} Z^{\alpha}u(s, \cdot)\|ds\leq\sum_{|\beta|\leq 1}\int_0^{T}\|\partial^{\beta}\partial L^{\mu_0} Z^{\alpha} u(s, \cdot)\|_{L^2(|x|\leq 2)}^2 ds.
\end{equation*}
Combining this with \eqref{mkss6} yields that for $0\leq t\leq t_0$,
\begin{equation}\label{mkss7}
\|\partial L^{\mu_0} Z^{\alpha} u(t, \cdot)\|^2
\lesssim\sum_{|\beta|\leq 1}\int_0^{T}\|\partial^{\beta}\partial L^{\mu_0} Z^{\alpha} u(s, \cdot)\|_{L^2(|x|\leq 2)}^2 ds,
\end{equation}
and for  $t_0\leq t\leq T$,
\begin{equation}\label{mkss8}
\|\partial L^{\mu_0} Z^{\alpha} u(t, \cdot)\|^2\lesssim \int_0^{t}\|\Box L^{\mu_0}Z^{\alpha} u(s, \cdot)\|\cdot\|\partial L^{\mu_0} Z^{\alpha}u(s, \cdot)\|ds.
\end{equation}
If there exists a $t^*\in [0, t_0]$ such that
\begin{equation*}
\|\partial L^{\mu_0}Z^{\alpha}u(t^*, \cdot)\|=\sup\limits_{0<t<T}\|\partial L^{\mu_0}Z^{\alpha}u(t, \cdot)\|,
\end{equation*}
then it follows from \eqref{mkss7} that for $0\leq t\leq T$,
\begin{equation}\label{mkss9}\begin{aligned}
\|\partial L^{\mu_0} Z^{\alpha} u(t, \cdot)\|^2
\lesssim\sum_{|\beta|\leq 1}\int_0^{T}\|\partial^{\beta}
\partial L^{\mu_0} Z^{\alpha} u(s, \cdot)\|_{L^2(|x|\leq 2)}^2 ds.
\end{aligned}
\end{equation}
If there exists a $t_{*}\in [t_0, T]$ such that
\begin{equation*}
\|\partial L^{\mu_0} Z^{\alpha} u(t_{*}, \cdot)\|=\sup\limits_{0<t<T}\|\partial L^{\mu_0} Z^{\alpha} u(t, \cdot)\|,
\end{equation*}
then it follows from \eqref{mkss8} and the definition of $t_{*}$ that for $0\leq t\leq T$,
\begin{equation}\label{mkss10}\begin{aligned}
&\|\partial L^{\mu_0} Z^{\alpha}u(t, \cdot)\|
\leq \|\partial L^{\mu_0}Z^{\alpha}u(t_{*}, \cdot)\|\\[2mm]
\lesssim&\int_0^{t_{*}}\|\Box L^{\mu_0} Z^{\alpha} u(s, \cdot)\| ds
\leq \int_0^{T}\|\Box L^{\mu_0} Z^{\alpha} u(s, \cdot)\| ds.
\end{aligned}
\end{equation}
Finally, it follows from \eqref{mkss9} and \eqref{mkss10} that
\begin{equation*}\label{mkss11}\begin{aligned}
\|\partial L^{\mu_0} Z^{\alpha} u(t, \cdot)\|^2
&\lesssim \left(\int_0^{T}\|\Box L^{\mu_0} Z^{\alpha}u(s, \cdot)\| ds\right)^2
+\sum\limits_{\mu\leq \mu_0\atop |\beta|\leq |\alpha|+1}\int_0^{T}\|L^{\mu}\partial^{\beta}\partial u(s, \cdot)\|_{L^2(|x|\leq 2)}^2 ds.
\end{aligned}
\end{equation*}
Simultaneously, one has that
\begin{equation*}\label{mkss12}\begin{aligned}
&(1+T)^{-1/2}\|\partial L^{\mu_0} Z^{\alpha} u\|_{L^2(S_T)}\\[2mm]
\lesssim&\sum\limits_{\mu\leq \mu_0\atop |\beta|\leq |\alpha|+1}\|L^{\mu} \partial^{\beta}\partial u(s, \cdot)\|_{L^2([0, T]\times \{|x|\leq 2\})}+\int_0^{T}\|\Box L^{\mu_0} Z^{\alpha} u(s, \cdot)\| ds.
\end{aligned}
\end{equation*}
Combining this with \eqref{mkss5}, Lemma \ref{Localest2} and the scaling method in \cite{KSS4} yields Proposition \ref{MKSS}.$\square$

\section{Pointwise decay estimate}\label{IV}
In this section, we derive the pointwise decay estimate of the smooth solution $w$ to the following problem
\begin{equation}\label{LNV}\begin{cases}
\Box w=F(t, x),\qquad  \ (t, x)\in \mathbb{R}_{+}\times\mo,\\[2mm]
\partial_{\boldsymbol{\nu}}w=0, \qquad\qquad (t, x)\in\mathbb{R}_{+}\times\partial\mo,\\[2mm]
w(t, x)\equiv 0,\qquad\qquad t\leq 0.
\end{cases}
\end{equation}

\begin{thm}\label{Point} Suppose that $w$ is a smooth solution of problem \eqref{LNV}. Then for integers $\mu $ and $\alpha$, one has
that
\begin{equation}\label{point1}\begin{aligned}
&(1+t+|x|)|L^{\mu} Z^{\alpha} w(t, x)|\\[2mm]
\lesssim &\int_0^{t}\int_{\mo}\sum\limits_{\nu+|\beta|\leq |\alpha|+\mu+5\atop\nu\leq \mu+1}|L^{\nu} Z^{\beta} F(s, y)|\frac{dyds}{|y|}\\[2mm]
&+\int_0^{t}\sum\limits_{\nu+|\beta|\leq |\alpha|+\mu+2\atop\nu\leq \mu+1}\|L^{\nu}\partial^{\beta}F(s, \cdot)\|_{L^2(|x|\leq 2)}ds.
\end{aligned}
\end{equation}

\end{thm}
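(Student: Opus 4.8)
The plan is to reduce \eqref{LNV} to a boundaryless wave equation on $\mathbb{R}_{+}\times\mathbb{R}^3$, exactly as in the cutoff decompositions already used throughout Sect.\ref{III}, and then to invoke the classical Kirchhoff pointwise estimate for the free wave operator together with the rotation fields only. Fix $\mu,\alpha$ and set $v=L^{\mu}Z^{\alpha}w$. Since $[\Box,\Omega]=0$, $[\Box,\partial_{\beta}]=0$, $[\Box,L]=2\Box$, one gets $\Box v=\sum_{\nu\leq\mu,\ |\beta|\leq|\alpha|}c_{\nu\beta}L^{\nu}Z^{\beta}F=:G$ in $\mathbb{R}_{+}\times\mo$, with $v\equiv 0$ for $t\leq 0$; note $\Box w=F$ and $w\equiv 0$ for $t\leq 0$ force $F\equiv 0$ for $t\leq 0$ as well. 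The function $v$ itself need not satisfy the Neumann condition unless $\mk$ is a ball, but this discrepancy is localized near $\partial\mo$. I would then split $v=(1-\varrho(x/2))v+\varrho(x/2)v$ and estimate the two pieces.

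\textbf{The near-obstacle piece $(1-\varrho(x/2))v$.} It is supported in $\{|x|\leq 3\}$, where $1+t+|x|\sim 1+t$, so one only needs $(1+t)\|v(t,\cdot)\|_{L^{\infty}(|x|\leq 3)}$. By Sobolev's inequality (Lemma \ref{Sobolev}) this is $\lesssim(1+t)\sum_{|\gamma|\leq 2}\|\Omega^{\gamma}\nabla^{\gamma}L^{\mu}\partial^{\alpha}w(t,\cdot)\|_{L^2(|x|\leq 4)}$; after trading spatial derivatives for $\Box w=F$ and one derivative through the boundary elliptic estimate \eqref{boundary} (legitimate since $\partial_0^{j}w$ still satisfies the Neumann condition), this is controlled by local-in-space norms of $L^{\mu}\partial^{\alpha}\partial w$, which are exactly what Lemma \ref{Localest1} and Lemma \ref{Spacelocal} (estimates \eqref{locale1} and \eqref{spacelo1}) bound: a sum of $\int_0^t e^{-\c(t-s)/2}\|L^{\nu}\partial^{\beta}F(s,\cdot)\|_{L^2(|x|\leq 4)}\,ds$, a pointwise term $\|L^{\nu}\partial^{\beta}F(t,\cdot)\|_{L^2(|x|\leq 4)}$, and spherical-shell integrals $\int_0^t\!\int_0^s\|L^{\nu}\partial^{\beta}F(\tau,\cdot)\|_{L^2(\{||x|-(s-\tau)|<10\})}\,d\tau\,ds$. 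To absorb the weight $1+t$ I would use that $F\equiv 0$ for $t\leq 0$ gives $\|L^{\nu}\partial^{\beta}F(t,\cdot)\|_{L^2(|x|\leq 4)}\leq\int_0^t\|\partial_s(L^{\nu}\partial^{\beta}F)(s,\cdot)\|_{L^2(|x|\leq 4)}\,ds$, and that $(1+t)e^{-\c(t-s)/2}\lesssim e^{-\c(t-s)/4}+(1+s)e^{-\c(t-s)/2}$, so each term picks up at most one extra factor $1+s$, i.e. at most one extra $L$; the shell integrals are converted, by the standard Keel--Smith--Sogge maneuver (as in \cite{KSS4}), into $\int_0^t\!\int_{\mo}|L^{\nu}\partial^{\beta}F|\,dy\,ds/|y|$. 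This produces a bound by the right-hand side of \eqref{point1}, and it is here that the derivative loss and the constraint $\nu\leq\mu+1$ arise.

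\textbf{The far piece $\varrho(x/2)v$.} It vanishes near $\partial\mo$, hence extends by zero to $\mathbb{R}_{+}\times\mathbb{R}^3$ and solves $\Box(\varrho(x/2)v)=\varrho(x/2)G+2\nabla\varrho(x/2)\cdot\nabla v+(\Delta\varrho(x/2))v$ with zero data for $t\leq 0$, the last two terms supported in $\{1\leq|x|\leq 3\}$. For the free equation I would invoke the classical pointwise estimate: if $\Box V=H$ on $\mathbb{R}_{+}\times\mathbb{R}^3$ with $V\equiv 0$ for $t\leq 0$, then
\begin{equation*}
(1+t+|x|)\,|V(t,x)|\lesssim\int_0^{t}\int_{\mathbb{R}^3}\sum_{|\gamma|\leq 2}|Z^{\gamma}H(s,y)|\,\frac{dy\,ds}{|y|}.
\end{equation*}
Applied to $\varrho(x/2)v$ and commuting $Z^{\gamma}$ past the cutoff and past $L^{\nu}$ (which creates only terms $L^{\nu'}Z^{\beta'}$ with $\nu'\leq\nu$), the $\varrho(x/2)G$ part gives $\int_0^t\!\int_{\mo}\sum_{\nu+|\beta|\leq|\alpha|+\mu+2}|L^{\nu}Z^{\beta}F|\,dy\,ds/|y|$; the two commutator parts, supported in $\{1\leq|x|\leq 3\}$ and involving $v,\nabla v$ there, are dominated (using $1/|y|\leq 1$ and Cauchy--Schwarz on a bounded set) by $\int_0^t\|v(s,\cdot)\|_{H^{k}(1\leq|x|\leq 3)}\,ds$ for a suitable $k$, hence again by the near-obstacle analysis above. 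Adding the two pieces yields \eqref{point1}.

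\textbf{Where the work is.} The Kirchhoff estimate and the commutations with $\Box$ are routine. The delicate part is the near-obstacle analysis: since $Z^{\alpha}w$ does not satisfy the Neumann condition for a non-spherical $\mk$, one must first pass to the time derivatives $\partial_0^{j}w$, then recover all spatial and angular derivatives through \eqref{boundary} (and, for the outgoing tails, through the boundary-flattening change of variables \eqref{cr} and Lemma \ref{Spacelocal}), while simultaneously carrying the weight $1+t+|x|$ through this chain — harmless on $\{|x|\lesssim1\}$ because of the exponential gain in Lemma \ref{Exponetial}, and furnished directly by the Kirchhoff formula in the far region. Keeping track of the fixed number of derivatives lost at each step is exactly what turns $|\alpha|+\mu$ into $|\alpha|+\mu+5$ (resp. $+2$) and $\mu$ into $\mu+1$ in \eqref{point1}.
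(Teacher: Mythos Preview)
The architecture (cutoff near $\partial\mo$ + free-wave pointwise estimate away) matches the paper, but the near-obstacle step has a genuine gap. When you apply Lemma \ref{Spacelocal} (estimate \eqref{spacelo1}) at a fixed time and then multiply by $1+t$, the exponential terms and the pointwise $\|F(t,\cdot)\|$ term can indeed absorb the weight as you indicate; the two spherical-shell terms cannot. Your claimed conversion of $(1+t)\int_0^{t}\|L^{\nu}\partial^{\beta}F(s,\cdot)\|_{L^{2}(\{||x|-(t-s)|<10\})}\,ds$ into $\int_0^{t}\int_{\mo}|L^{\nu}\partial^{\beta}F|\,|y|^{-1}dy\,ds$ is false in general: by Cauchy--Schwarz on a shell of radius $\sim r$ and thickness $O(1)$ the inequality goes the \emph{other} way, namely $\int_{\mathrm{shell}}|G|\,|y|^{-1}dy\lesssim\|G\|_{L^{2}(\mathrm{shell})}$. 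There is no ``standard KSS maneuver'' that reverses this. The same issue infects your far-piece commutator, because your $Z$-only free-wave bound returns $\int_0^{t}\|v(s,\cdot)\|_{H^{k}(1\le|x|\le 3)}\,ds$; feeding in the near-obstacle output $(1+s)^{-1}\times[\text{RHS of \eqref{point1}}]$ then costs a factor $\ln(1+t)$.

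What the paper does instead is this. For the commutator in the cutoff it uses not the $L^{\nu}Z^{\beta}$-weighted free estimate but the sharper Kirchhoff shell bound
\[
|x|\,|V(t,x)|\lesssim\int_0^{t}\int_{||x|-(t-s)|}^{|x|+(t-s)}\sup_{\omega}|H(s,r\omega)|\,r\,dr\,ds .
\]
Because the commutator source is supported in $1\le|y|\le 2$, the $s$-integral localizes to $|s-(t-|x|)|\le 2$, and one reads off directly
\[
(1+t+|x|)|L^{\mu}Z^{\alpha}w|\lesssim[\text{first term of \eqref{point1}}]+\sup_{0\le s\le t,\ |y|\le 2}(1+s)\sum_{\nu\le\mu,\ \nu+|\beta|\le\mu+|\alpha|+1}|L^{\nu}\partial^{\beta}w(s,y)|,
\]
a \emph{supremum} in $s$, not a time integral. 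The remaining local sup is then handled via the identity $t\,w(t,x)=\int_0^{t}\partial_s(sw)\,ds$ (this is where the extra $L$ and the constraint $\nu\le\mu+1$ genuinely enter), followed by Sobolev and Lemma \ref{Exponetial} when $F$ is supported near $\partial\mo$; when $F$ is supported in $|x|\ge 2$ one uses the further observation that the regions $\{(\tau,y):|s-\tau-|y||\le 4\}$ for different $s$ with $|s-s'|\ge 10$ are disjoint, so the nested time integral collapses to a single $\int_0^{t}\int_{\mo}|\cdot|\,|y|^{-1}dy\,ds$. Those two ingredients --- the Kirchhoff shell bound \eqref{free2} and the shell-disjointness collapse --- are precisely what your sketch is missing.
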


\begin{proof} At first, we consider the following inhomogeneous wave equation
\begin{equation}\label{free}\begin{cases}
\Box w_0=G_0,\ \ (t, x)\in\mathbb{R}_{+}\times\mathbb{R}^3,\\[2mm]
w_0(t, x)\equiv 0,\ \ t\leq 0.
\end{cases}
\end{equation}
It follows from Proposition 2.1 and Lemma 2.2 in \cite{KSS3} (one can also see Theorem 3.1 in \cite{MC2}) that
\begin{equation}\label{free1}
(1+t+|x|)|w_0(t,x)|\lesssim\int_0^{t}\int_{\mathbb{R}^3}\sum\limits_{\nu+|\beta|\leq 3\atop \nu\leq 1}|L^{\nu} Z^{\beta} G_0(s, y)|\frac{dyds}{|y|}
\end{equation}
and
\begin{equation}\label{free2}
|x||w_0(t,x)|\lesssim\int_0^{t}\int_{||x|-(t-s)|}^{|x|+(t-s)}\sup\limits_{\omega\in\mathbb{S}^2}|G_0(s, r\omega)|r dr ds.
\end{equation}
Set $w_0(t, x)=\varrho(x)L^{\mu} Z^{\alpha}w(t, x)$. Then
\begin{equation*}\begin{cases}
\Box w_0=\varrho(x)\Box  L^{\mu}Z^{\alpha} w+\hat G_0(t, x),\\[2mm]
w_0(t, x)\equiv 0, \ t\leq 0,
\end{cases}
\end{equation*}
where $\hat G_0(t, x)=-2\nabla\varrho(x)\cdot\nabla L^{\mu}Z^{\alpha}w-(\Delta\varrho)(x)L^{\mu}Z^{\alpha}w.$
We decompose $w_0$ as $w_0=\hat w_0+(w_0-\hat w_0)$ with  $\hat w_0$ satisfying the following inhomogeneous equation
\begin{equation*}\begin{cases}
\Box \hat w_0=\varrho(x)\Box L^{\mu} Z^{\alpha} w=\varrho(x)\sum\limits_{\nu\leq \mu}C_{\nu\mu}L^{\nu}Z^{\alpha} F,\\[2mm]
\hat w_0(t, x)\equiv 0,\qquad  \ t\leq 0.
\end{cases}
\end{equation*}
By \eqref{free} and \eqref{free1}, we have that
\begin{equation}\label{free3}
(1+t+|x|)|\hat w_0(t,x)|
\lesssim\int_0^{t}\int_{\mo}\sum\limits_{\nu+|\beta|\leq \mu+|\alpha|+3\atop\nu\leq \mu+1}| L^{\nu}Z^{\beta}F(s, y)|\frac{dyds}{|y|}.
\end{equation}
In addition, due to the definition of $\hat w_0$, it also follows from \eqref{free2} that
\begin{equation}\label{free4}\begin{aligned}
|x||(w_0-\hat w_0)(t, x)|\lesssim&\int_0^{t}\int_{||x|-(t-s)|}^{|x|+(t-s)}\sup\limits_{\omega\in \mathbb{S}^2}|\hat G_0(s, r\omega)|r dr ds.\\[2mm]
\end{aligned}
\end{equation}
Note that $\hat G_0(t, y)\equiv 0$ for $|y|\leq 1$ and $|y|\geq 2$. Then the time integrand in \eqref{free4} is not zero only for
$-2\leq |x|-(t-s)\leq 2$ (or equivalently, $(t-|x|)-2\leq s\leq (t-|x|)+2$).
This, together with \eqref{free4} and the definition of $\hat G_0$, yields
\begin{equation*}
|x||(w_0-\hat w_0)(t, x)|\lesssim\frac{1}{1+|t-|x||}\sup\limits_{(t-|x|)-2\leq s\leq (t-|x|)+2\atop |y|\leq 2}(1+s)\sum\limits_{|\beta|\leq 1}|\partial^{\beta} L^{\mu}Z^{\alpha} w|(s, y).
\end{equation*}
Combining this with \eqref{free3} shows
\begin{equation}\label{free5}\begin{aligned}
(1+t+|x|)|L^{\mu}Z^{\alpha} w(t, x)|&\lesssim \int_0^{t}\int_{\mo}\sum\limits_{\nu+|\beta|\leq\mu+|\alpha|+3\atop \nu\leq \mu+1}|L^{\nu}Z^{\beta} F(s, y)|\frac{dyds}{|y|}\\[2mm]
&\quad +\sup\limits_{0\leq s\leq t\atop |y|\leq 2}(1+s)\sum\limits_{\nu+|\beta|\leq \mu+|\alpha|+1\atop \nu\leq \mu}|L^{\nu}\partial^{\beta}w|(s, y).
\end{aligned}
\end{equation}
Next we manage to control the last term in the right hand side of \eqref{free5}. To this end, it suffices to estimate the
term $\sup\limits_{|x|\leq 2}t|w(t, x)|$ since the other cases can be treated in the similar way with the help of Lemma \ref{Conclusion} and Lemma \ref{Exponetial}. To this end, with the decomposition of $w$ similar to that of $u$ in the proof of Lemma \ref{Spacelocal}, we only need to consider two cases $F(t, x)=0$ for $|x|\geq 4$ and $F(t, x)=0$ for $|x|\leq 2$.

When $F(t, x)=0$ for $|x|\geq 4$, one has,
\begin{equation*}
t|w(t, x)|\leq\sum\limits_{j=0, 1}\int_0^{t}|(s\partial_0)^{j}w(s, x)|ds.
\end{equation*}
This, together with Sobolev imbedding lemma, \eqref{exponential} in Lemma \ref{Exponetial}  and Remark \ref{Ipdecay}, yields
\begin{equation}\label{free6}\begin{aligned}
t\sup_{|x|\leq 2}|w(t, x)|&\lesssim \sum\limits_{0\leq j\leq 1;\ |\sigma|\leq 2}\int_0^{t}s^{j}\|\partial_0^{j}\nabla^{\sigma}w(s, x)\|_{L^2(|x|\leq 2)}ds\\[2mm]
&\lesssim \sum\limits_{l\leq 1;\ j\leq 1}\int_0^{t}s^{l}\int_0^{s}e^{-\c(s-\tau)}\|\partial_0^{j+l}F(\tau, \cdot)\|d\tau ds+\sum\limits_{l\leq 1}\int_0^{t}s^{l}\|\partial_0^{l}F(s, \cdot)\|ds\\[2mm]
&\lesssim\sum\limits_{l\leq 1;\ j\leq 1}\int_0^{t}\int_0^{s}e^{-\c(s-\tau)}(s-\tau)^{l}\|\partial_0^{j+l}F(\tau, \cdot)\|d\tau ds+\sum\limits_{l\leq 1}\int_0^{t}s^{l}\|\partial_0^{l}F(s, \cdot)\|ds\\[2mm]
&\quad+\sum\limits_{l\leq 1;\ j\leq 1}\int_0^{t}\int_0^{s}e^{-\c(t-s)}\tau^{l}\|\partial_0^{j+l}F(\tau, \cdot)\|d\tau ds\\[2mm]
\lesssim&\sum\limits_{\mu+|\alpha|\leq 2;\ \mu\leq 1}\int_0^{t}\|L^{\mu}Z^{\alpha}F(s, \cdot)\|ds,
\end{aligned}
\end{equation}
where the last inequality comes from the Young inequality and the definition of $L$ and $Z$. Combining \eqref{free6} with \eqref{free5} yields \eqref{point1} when $F(t, x)=0$ if $|x|\geq 4$.

When $F(t, x)=0$ if $|x|\leq 2$, as in the proof of Lemma \ref{Spacelocal}, we set $w=w_0+w_r$ with $w_0$ defined by \eqref{free} and
the corresponding $G_0$ replaced by $F$. In addition, we set $\hat w_0=(1-\varrho(x/2))w_0+w_r$ with $\Box \hat w_0$ supported in $(|x|\leq 4)$
due to $\Box \hat w=\nabla\varrho(x/2)\cdot \nabla w_0+2\Delta \varrho(x/2)w_0.$
Then $w=\hat w$ when $|x|\leq 2$. Following the proof for the case of $F(t, x)=0$ when $|x|\geq 4$, we arrive at
\begin{equation}\label{free7}\begin{aligned}
&t\sup\limits_{|x|\leq 2}|L^{\nu}\partial^{\beta}w(t, x)|\\[2mm]
=&t\sup\limits_{|x|\leq 2}|L^{\nu}\partial^{\beta}\hat w(t, x)|\\[2mm]
\lesssim& \int_0^{t}\sum\limits_{l+|\sigma|\leq \nu+|\beta|+2\atop l\leq \nu+1}\|L^{l}\partial^{\sigma}\Box \hat w(s, \cdot)\|ds\\[2mm]
\lesssim&\int_0^{t}\sum\limits_{l+|\sigma|\leq \nu+|\beta|+3\atop l\leq \nu+1}\|L^{l}\partial^{\sigma}w_0(s, \cdot)\|_{L^{\infty}(2\leq |x|\leq 4)}ds.
\end{aligned}
\end{equation}
On the other hand, by \eqref{free2} and Lemma \ref{Sobolev},
\begin{equation}\label{3.2}\begin{aligned}
&\ds \sup_{2\leq |x|\leq 4}|L^{l}\partial^{\sigma}w_0(s, x)|\\[2mm]
\lesssim&\sum\limits_{l'+|\sigma'|\leq l+|\sigma|\atop l'\leq l}\int_0^{s}\int_{||x|-(s-\tau)|}^{|x|+(s-\tau)}\sup\limits_{\omega\in\mathbb{S}^2}|L^{l'}Z^{\sigma'}F(\tau, r\omega)|rdrd\tau\\[2mm]
\lesssim& \sum_{l'+|\sigma'|\leq l+|\sigma|+2\atop l'\leq l}\int_0^{s}\int_{|s-\tau-|y||\leq 4}|L^{l'} Z^{\sigma'}F(\tau, y)|\frac{dyd\tau}{|y|}.
\end{aligned}
\end{equation}
Note that $\{(\tau, y): 0\leq \tau\leq s, |s-\tau-|y||\leq 4\}\cap\{(\tau, y): 0\leq \tau\leq s', |s'-\tau-|y||\leq 4\}=\varnothing$
when $|s-s'|\geq 10$. Then  substituting \eqref{3.2} into \eqref{free7} leads to
\begin{equation*}
\sup\limits_{|y|\leq 2}(1+t)\sum\limits_{\nu+|\beta|\leq \mu+|\alpha|+1\atop \nu\leq \mu}|L^{\nu}\partial^{\beta}w|(s, y)\lesssim\int_0^{t}\int_{\mo}\sum\limits_{\nu+|\beta|\leq \mu+|\alpha|+5\atop \nu\leq \mu+1}|L^{\nu}Z^{\beta}F(s, y)|\frac{dyds}{y}.
\end{equation*}
Combining this with \eqref{free5} and \eqref{free6} shows \eqref{point1}.\qquad
\qquad \qquad \qquad \qquad \qquad \qquad \qquad \qquad $\square$\end{proof}

\section{Continuity induction argument and proof of Theorem \ref{them1}}\label{V}

In this section, based on Sect. \ref{II}-Sect. \ref{IV} and the local existence of problem \eqref{Mpro}
shown in Sect. \ref{VI} below,
we start to prove Theorem \ref{them1} by the continuity induction argument.

It follows from Theorem \ref{Lexistence} and Remark \ref{Rweighted} in Sect. \ref{VI} that
the solution $u$ of problem \eqref{Mpro} uniquely exists for $0\leq t\leq 4$ and
\begin{equation}\label{Start}
\sup\limits_{0<t<4}\sum\limits_{|\alpha|\leq 69}\|\left<x\right>^{\alpha}\partial^{\alpha}u(t, \cdot)\|\lesssim\varepsilon.
\end{equation}
In addition, as shown in Sect. 5 of \cite{MC2}, the solution of problem \eqref{Mpro} exists in the
region $\{(t, x): 0\leq 2t\leq |x|\}$\footnote{If the initial date $(u_0, u_1)$ in \eqref{intialcon} is compactly supported, then due to the weak Huygens principle, the solution $u$ in this domain is identically zero.} and satisfies
\begin{equation}\label{5.0}
\sup\limits_{t\geq 0}\sum\limits_{|\alpha|\leq 69}\|\left<x\right>^{\alpha}\partial^{\alpha}u(t, \cdot)\|_{L^2(\{x: |x|\geq 2t\})}\lesssim \varepsilon.
\end{equation}
Combining \eqref{5.0} with \eqref{Start} yields that
\begin{equation}\label{Start1}
\sup\limits_{0<t<+\infty}\sum\limits_{|\alpha|\leq 69}\|\left<x\right>^{|\alpha|}\partial^{\alpha}u(t, \cdot)\|_{L^2(x\in\mo: (t, x)\in D_0)}\lesssim \varepsilon,
\end{equation}
where $D_0=\{(t, x): 0\leq 2t\leq |x|, x\in\mo\}\cap \{(t, x): 0\leq t\leq 4, x\in\mo\}$.

To prove the global existence theorem, we will do some preparations based on \eqref{Start1} first. Set
\begin{equation}\label{Start2}
u_0(t, x)=\zeta(t, x)u(t, x),
\end{equation}
where $\zeta(t, x)=\eta(t V(x))$. Choose the functions $\eta\in C^{\infty}(\mathbb{R})$ and $V(x)\in C^{\infty}(\mo)$ such that
\begin{equation*}\begin{aligned}
&\eta(s)=1\ \ \text{for $s\leq 1/8$};\ \ \eta(s)=0\ \ \text{for $s\geq 1/4$},\\[2mm]
&0<V(x)\leq 1;\ \ V(x)=1\ \ \text{for $|x|\leq 4$};\ \ V(x)=|x|^{-1}\ \ \text
{for $|x|\geq 8$}.
\end{aligned}
\end{equation*}
Direct computation yields
\begin{equation*}
\Box u_0=\zeta\mathcal{N}(\partial u, \partial^2 u)+[\Box, \zeta]u.
\end{equation*}
It follows from \eqref{Start1}-\eqref{Start2} and Lemma \ref{Sobolev} that there exists an absolute constant $\kappa_0$ such that
\begin{equation}\label{reduce2}
(1+t+|x|)\sum\limits_{\mu+|\alpha|\leq 67}|L^{\mu}Z^{\alpha}u_0(t,x)|+\sum\limits_{\mu+|\alpha|+|\beta|\leq 69}\|\left<t+r\right>^{|\beta|}L^{\mu}Z^{\alpha}\partial^{\beta}u_0(t, \cdot)\|\leq \kappa_0\varepsilon.
\end{equation}
Let $u=u_0+w$ with $w$ satisfying
\begin{equation}\label{reduce1}\begin{cases}
\Box w=(1-\zeta)\mathcal{N}(\partial u, \partial^2 u)-[\Box, \zeta]u,\qquad  \quad (t, x)\in \mathbb{R}_{+}\times\mo,\\[2mm]
\partial_{\boldsymbol{\nu}}w=0, \qquad  \qquad (t, x)\in\mathbb{R}_{+}\times\partial\mo,\\[2mm]
w(t, x)\equiv 0, \qquad \qquad  t\leq 0.
\end{cases}
\end{equation}
To prove Theorem \ref{them1} by the continuity induction argument, we 
require to establish the weighted estimate of $w$.

First, let $v$ be the solution of the following problem
\begin{equation}\label{reduce11}\begin{cases}
\Box v=-[\Box, \zeta]u,\qquad  \ (t, x)\in\mathbb{R}_{+}\times\mo,\\[2mm]
\partial_{\boldsymbol{\nu}}v=0,\qquad  \ (t, x)\in\mathbb{R}_{+}\times\partial\mo,\\[2mm]
v(t, x)\equiv 0,\qquad  \ t\leq 0.
\end{cases}
\end{equation}
By \eqref{Start1} and Theorem \ref{Point}, we can derive that there exists an absolute constant $\kappa_1>0$
such that
\begin{equation}\label{reduce4}
(1+t+|x|)\sum\limits_{\mu+|\alpha|\leq 60}|L^{\mu}Z^{\alpha}v(t, x)|\leq \kappa_1\varepsilon.
\end{equation}
Indeed, by Theorem \ref{Point},
\begin{equation*}\begin{aligned}
&(1+t+|x|)\sum\limits_{\mu+|\alpha|\leq 60}|L^{\mu}Z^{\alpha}v(t, x)|\\[2mm]
&\lesssim\int_0^{t}\int_{\{x\in\mo: (s, x)\in D_0\}}\sum\limits_{\mu+|\alpha|\leq 65\atop \mu\leq 61}|L^{\mu}Z^{\alpha}([\Box, \zeta]u)(s, x)|\frac{dxds}{|x|}\\[2mm]
&\quad +\int_0^{t}\sum\limits_{\mu+|\alpha|\leq 62\atop \mu\leq 61}
\|L^{\mu}\partial^{\alpha}([\Box, \zeta]u)(s, \cdot)\|_{L^2(|x|\leq 2)}ds\\
&=I_{0}+II_{0}.
\end{aligned}
\end{equation*}
With \eqref{Start1}-\eqref{Start2} and the pseudo homogeneity of $\zeta$, we arrive at
\begin{equation*}\begin{aligned}
I_{0}\lesssim& \varepsilon+\int_0^{t}\int_{4s\leq |x|\leq 8s\atop |x|\geq 8}\sum\limits_{\mu+|\alpha|\leq 65\atop \mu\leq 61}|L^{\mu}Z^{\alpha}([\Box, \zeta]u)(s, x)|\frac{dxds}{|x|}\\[2mm]
\lesssim&\int_0^{t}\int_{4s\leq |x|\leq 8s\atop |x|\geq 8}\sum\limits_{\mu+|\alpha|\leq 65\atop\mu\leq 61}\left(\frac{1}{|x|^2}|L^{\mu}Z^{\alpha}u|+\frac{1}{|x|}|L^{\mu}Z^{\alpha}\partial u|\right)(s, x)\frac{dxds}{|x|}\\[2mm]
\lesssim&\varepsilon\int_{0}^{t}\left(\int_{4s\leq |x|\leq 8s\atop |x|\geq 4}\frac{1}{|x|^6}dx\right)^{1/2}ds\\[2mm]
\lesssim&\varepsilon,\\[2mm]
II_{0}\lesssim&\varepsilon,
\end{aligned}
\end{equation*}
here the last estimate for $II_{0}$ comes from the property of function $\zeta$.

We now start to set up the continuity induction argument. Suppose that
\begin{equation}\label{induction1}
(1+t+r)\sum\limits_{|\alpha|\leq 24}(|Z^{\alpha}w(t, x)|+|Z^{\alpha}\partial w(t, x)|)\leq 4\kappa_1\varepsilon,
\end{equation}
where $\kappa_1$ is given in \eqref{reduce4}. To prove Theorem \ref{them1} by the continuity induction argument, we shall prove that for small $\ve>0$,
\begin{equation}\label{induction1.1}
(1+t+r)\sum\limits_{|\alpha|\leq 24}(|Z^{\alpha}w(t, x)|+|Z^{\alpha}\partial w(t, x)|)\leq 2\kappa_1\varepsilon.
\end{equation}
The proof process will be divided into the following three parts.

\vskip 0.2 true cm

{\bf Part I. Under assumption \eqref{induction1}, the following estimates hold:

 \begin{equation}\label{induction2}
(1+t+r)\sum\limits_{|\alpha|+\mu\leq 32\atop\mu\leq 2}|L^{\mu}Z^{\alpha}w(t, x)|\leq B_1\varepsilon (1+t)^{1/5}\ln (2+t),
\end{equation}
\begin{equation}\label{induction3}
\sum\limits_{|\alpha|\leq 68}\|\partial^{\alpha}\partial w(t, \cdot)\|\leq B_2\varepsilon (1+t)^{1/20},
\end{equation}
\begin{equation}\label{induction4}
\sum\limits_{|\alpha|+\mu\leq 47\atop \mu\leq 3}\|L^{\mu}Z^{\alpha}\partial w(t, \cdot)\|\leq B_3\varepsilon (1+t)^{1/10},
\end{equation}
and
\begin{equation}\label{induction5}
\sum\limits_{|\alpha|+\mu\leq 39\atop \mu\leq 3}\|\left<x\right>^{-1/2}L^{\mu}Z^{\alpha}\partial w\|_{L^2(S_t)}\leq B_4\varepsilon (1+t)^{1/10}(\ln(2+t))^{1/2}.
\end{equation}\\}

By \eqref{Null1}, we can rewrite the equation  in \eqref{Mpro} as
\begin{equation}\label{Proof1}
\Box_h u=\mathcal{S}(\partial u),
\end{equation}
where $h^{\alpha\beta}(\partial u)=-\mathcal{Q}^{\alpha\beta}(\partial u)$. Since $u=u_0+w$, it follows from \eqref{reduce2}, \eqref{induction1} and \eqref{notation} that
\begin{equation}\label{Proof2}
\|h(t)\|\lesssim \frac{\varepsilon}{1+t},\ \ \|\partial h(t)\|\lesssim \frac{\varepsilon}{1+t}.
\end{equation}
This, together with \eqref{timenl} in Lemma \ref{Timenl}, yields that for $0\leq M\leq 68$,
\begin{equation}\label{Proof3}
\partial_0\mathcal{E}_{0, M}^{1/2}(t)\lesssim \sum\limits_{0\leq j\leq M}\|\Box_h \partial_0^{j} u(t, \cdot)\|
+\frac{\varepsilon}{1+t}\mathcal{E}_{0, M}^{1/2}(t).
\end{equation}
Note that
\begin{equation}\label{Proof3.1}\begin{aligned}
&\sum\limits_{j\leq M}|\Box_h \partial_0^j u|\\[2mm]
\lesssim&\sum\limits_{j\leq M}\left(|[\Box_h, \partial_0^{j}]u|+|\partial_0^{j}\mathcal{S}(\partial u)|\right)\\[2mm]
\lesssim& (\sum\limits_{j\leq M}|\partial_0^j \partial u|+\sum\limits_{j\leq M-1}|\partial_0^j \partial^2 u|)\sum\limits_{|\alpha|\leq 24}|\partial^{\alpha}\partial u|+\sum\limits_{|\alpha|< M-24}|\partial^{\alpha}\partial u| \sum\limits_{24<|\beta|\leq M/2}|\partial^{\alpha}\partial u|\\[2mm]
\lesssim&\frac{\varepsilon}{1+t}(\sum\limits_{j\leq M}|\partial_0^j \partial u|+\sum\limits_{j\leq M-1}|\partial_0^j \partial^2 u|)+\sum\limits_{|\alpha|< M-24}|\partial^{\alpha}\partial u| \sum\limits_{24<|\beta|\leq M/2}|\partial^{\beta}\partial u|,
\end{aligned}
\end{equation}
here the cubic terms in $\mathcal{N}(\partial u, \partial^2 u)$ are neglected since \eqref{reduce2} and \eqref{induction1} imply $|\partial^{\alpha}\partial u|\lesssim\varepsilon/(1+t)$ for $|\alpha|\leq 24$. Furthermore,
by using Lemma \ref{Elliptic} and repeating the above argument, we have that for $0<\varepsilon\leq \varepsilon_0$,
\begin{equation}\label{Proofaux1}\begin{aligned}
&\sum\limits_{j\leq M-1}\|\partial_0^j\partial^2 u(t, \cdot)\|\\[2mm]
\lesssim& \sum\limits_{j\leq M}\|\partial_0^j\partial u(t, \cdot)\|+\sum\limits_{j\leq M-1}\|\partial_0^j\Box u(t, \cdot)\|\\[2mm]
\lesssim& \sum\limits_{j\leq M}\|\partial_0^j\partial u(t, \cdot)\|+\frac{\varepsilon}{1+t}\sum\limits_{j\leq M-1}\|\partial_0^j \partial u(t, \cdot)\|+\sum\limits_{|\alpha|< M-24\atop |\beta|\leq M/2}\|\partial^{\alpha}\partial u(t, \cdot)\partial^{\beta}\partial u(t, \cdot)\|.
\end{aligned}
\end{equation}
In addition, for small $\varepsilon>0$, it follows from \eqref{Proof2}-\eqref{Proofaux1} and \eqref{elcompare} that
\begin{equation}\label{Proof4}
\partial_0 \mathcal{E}_{0, M}^{1/2}(t)\lesssim\frac{\varepsilon}{1+t}\mathcal{E}_{0, M}^{1/2}(t)+\sum\limits_{|\alpha|< M-24\atop |\beta|\leq M/2}\|\partial^{\alpha}\partial u(t, \cdot)\partial^{\beta}\partial u(t, \cdot)\|.
\end{equation}
If $M=24$, the last term in \eqref{Proof4} does not appear. Then by Gronwall's inequality
and $\mathcal{E}_{0, M}^{1/2}(0)\lesssim \varepsilon$ due to \eqref{Start}, one has that
\begin{equation}\label{Proof5}
\sum\limits_{j\leq 24}\|\partial_0^{j}\partial u(t, \cdot)\|\leq 2 \mathcal{E}_{0, 24}^{1/2}(t)\lesssim \varepsilon (1+t)^{C\varepsilon}.
\end{equation}
Together with \eqref{Proofaux1}, elliptic regularity estimate \eqref{see2} in Lemma \ref{Conclusion} and the similar argument of \eqref{Proof3.1} for $\Box u$, \eqref{Proof5} yields
\begin{equation}\label{Proof6}
\sum\limits_{|\alpha|\leq 24}\|\partial^{\alpha}\partial u(t, \cdot)\|\lesssim \varepsilon (1+t)^{C\varepsilon}.
\end{equation}
If $24<M\leq 68$, we shall deal with the last term in \eqref{Proof4}. It follows from Lemma \ref{Sobolev} that
\begin{equation*}\label{Proof7}
\sum\limits_{|\alpha|\leq M-24\atop |\beta|\leq M/2}\|(\partial^{\alpha}\partial u\partial^{\beta}\partial u)(t, \cdot)\|\lesssim \sum\limits_{|\alpha|\leq\max(M-22, 2+M/2)}\|\left<x\right>^{-1/2}Z^{\alpha}\partial u(t, \cdot)\|^2.
\end{equation*}
This, together with \eqref{Start}, \eqref{Proof4} and Gronwall's inequality, yields
\begin{equation}\label{Proof8}
\mathcal{E}_{0, M}^{1/2}(t)\lesssim (1+t)^{C\varepsilon}\left(\varepsilon+\sum\limits_{|\alpha|\leq\max(M-22, 2+M/2)}\|\left<x\right>^{-1/2}Z^{\alpha}\partial u\|_{L^2(S_t)}^2\right).
\end{equation}

To treat $\mathcal{E}_{0, M}(t)$ better and finish the proof in {\bf Part I}, we require to establish the following Proposition.
\begin{prop}\label{Auxiliary} For given nonnegative integer $\mu_0\leq 3$, we define the integer $M\leq 68-8\mu_0$ in the following three inequalities respectively. If
\begin{equation}\label{auxil1}\begin{aligned}
&\sum\limits_{|\alpha|\leq M-3}\|\left<x\right>^{-1/2}L^{\mu_0}\partial^{\alpha}\partial u\|_{L^2(S_t)}+\sum\limits_{|\alpha|\leq M-4}\|L^{\mu_0}Z^{\alpha}\partial u(t, \cdot)\|+\sum\limits_{|\alpha|\leq M-6}\|\left<x\right>^{-1/2}L^{\mu_0}Z^{\alpha}\partial u\|_{L^2(S_t)}\\[2mm]
&\lesssim\varepsilon (1+t)^{C\varepsilon +\sigma}
\end{aligned}
\end{equation}
holds with some $\sigma>0$ for $\mu_0=0, 1, 2$, then there is a constant $C'>0$ such that
\begin{equation}\label{auxil2}\begin{aligned}
&\sum\limits_{\mu+|\alpha|\leq M-2\atop \mu\leq \mu_0}\|\left<x\right>^{-1/2}L^{\mu}\partial^{\alpha}\partial u\|_{L^2(S_t)}+\sum\limits_{\mu+|\alpha|\leq M-3\atop \mu\leq \mu_0}\|L^{\mu}Z^{\alpha}\partial u(t, \cdot)\|\\[2mm]
&\qquad+\sum\limits_{\mu+|\alpha|\leq M-5\atop \mu\leq \mu_0}\|\left<x\right>^{-1/2}L^{\mu}Z^{\alpha}\partial u\|_{L^2(S_t)}
\lesssim\varepsilon (1+t)^{C'\varepsilon+C'\sigma},\qquad \ \mu_0=0, 1, 2, 3,
\end{aligned}
\end{equation}
and
\begin{equation}\label{auxil1.1}
\sum\limits_{\mu+|\alpha|\leq M\atop \mu\leq \mu_0}\|L^{\mu}\partial^{\alpha}\partial u(t, \cdot)\|\lesssim \varepsilon (1+t)^{C'\varepsilon+C'\sigma},\qquad  \ \mu_0=0, 1, 2, 3.
\end{equation}

\end{prop}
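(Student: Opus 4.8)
The plan is to prove Proposition \ref{Auxiliary} by induction on the scaling order $\mu_0\in\{0,1,2,3\}$, running a continuity argument in $T$ that couples the modified Keel--Smith--Sogge estimates of Proposition \ref{MKSS}, the weighted spatial energy estimate of Proposition \ref{Integral}, and the null--form gains of Lemma \ref{Null}; at the $\mu_0$--th step I will use the hypothesis \eqref{auxil1} at the current scaling order together with the conclusion \eqref{auxil2} already established at order $\mu_0-1$ to absorb the lower--scaling--order commutator terms, the base case $\mu_0=0$ needing only \eqref{auxil1} with $\mu_0=0$. First I would rewrite the equation in \eqref{Mpro} as $\Box_h u=\mathcal{S}(\partial u)$ with $h^{\alpha\beta}=-\mathcal{Q}^{\alpha\beta}(\partial u)$, noting that by \eqref{reduce2}, \eqref{induction1} and \eqref{Proof2} this $h$ obeys the admissible condition \eqref{AC1} and $\|h(t)\|+\|\partial h(t)\|\lesssim\varepsilon/(1+t)$. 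Applying $L^{\mu}Z^{\alpha}$ and $L^{\mu}\partial^{\alpha}$ and using the commutator identities \eqref{Chang1}--\eqref{Chang2}, I would express $\Box(L^{\mu}Z^{\alpha}u)$ as $L^{\mu}Z^{\alpha}\mathcal{S}(\partial u)$, minus the quasilinear commutator $[L^{\mu}Z^{\alpha},h^{\beta\gamma}\partial^{2}_{\beta\gamma}]u$, plus lower--scaling--order terms $L^{k}Z^{\alpha}\Box u$ with $k\le\mu-1$, plus terms localized in $\{|x|\le2\}$.

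Next I would estimate the nonlinearity using the null structure. For each product appearing in $L^{\mu}Z^{\alpha}\mathcal{S}(\partial u)$ and in the quasilinear commutator, Leibniz's rule distributes the vector fields onto two factors, and Lemma \ref{Null} bounds each term by $\langle r\rangle^{-1}$ times a $\Gamma$--derivative of one factor times $\partial$ (or $\partial^{2}$) of the other, plus pieces of the form $\tfrac{\langle t-r\rangle}{\langle t+r\rangle}|\partial(\cdots)|\,|\partial^{2}(\cdots)|$. Calling the most differentiated factor the ``high'' one, the complementary factor either carries few enough derivatives to be $\lesssim\varepsilon/(1+t)$ pointwise --- by Lemma \ref{Sobolev} together with \eqref{reduce2} and \eqref{induction1}--\eqref{induction2} --- or else is absorbed through the exterior weighted Sobolev inequality (Lemma \ref{Sobolev}), turning the product into weighted time--space $L^2$ norms; in either case the high factor stays within scaling order $\le\mu_0$ and $\le M$ $\Gamma$--derivatives, exactly what \eqref{auxil1} and \eqref{auxil2} at order $\mu_0-1$ control. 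The constraint $M\le68-8\mu_0$ with $\mu_0\le3$ is precisely what makes this dichotomy close.

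Then I would close the estimate. Feeding the above bounds into Proposition \ref{MKSS} controls the weighted time--space norms $\sum_{\mu+|\alpha|\le M-2,\ \mu\le\mu_0}\|\langle x\rangle^{-1/2}L^{\mu}\partial^{\alpha}\partial u\|_{L^{2}(S_t)}$ (and the $Z^{\alpha}$ version with $\le M-5$) in terms of $\int_0^{t}\|\Box L^{\mu}Z^{\alpha}u\|\,ds$ and $\|\Box L^{\mu}Z^{\alpha}u\|_{L^{2}(S_t)}$; the $\langle r\rangle^{-1}$--type contributions become $\lesssim\varepsilon$ times the same high--order weighted norm (hence absorbable for $\varepsilon\le\varepsilon_0$ small) plus lower--order terms bounded via \eqref{auxil1}, whereas the $\tfrac{\langle t-r\rangle}{\langle t+r\rangle}$--terms and the quasilinear commutator are recorded as the $\|h(t)\|$--part and the source $H_{\mu_0,\,M-\mu_0}$ in hypothesis \eqref{spacenl5} of Proposition \ref{Integral}. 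The local--in--space quantities $\|L^{\mu}\partial^{\alpha}\partial u\|_{L^{2}(|x|\le2)}$ entering \eqref{spacenl6} (and coming from \eqref{Chang2}) I would dispatch through Lemma \ref{Spacelocal}, estimate \eqref{weighted1}, and the exponential local energy decay of Lemma \ref{Exponetial}, so that they contribute only lower--order, $\varepsilon$--small amounts. A continuity argument in $T$ then yields \eqref{auxil2}, with $C'$ a fixed multiple of the constants produced by Gronwall's inequality; and \eqref{auxil1.1} follows from \eqref{auxil2} by invoking Proposition \ref{Integral} and Lemma \ref{Spacenl} to convert the weighted time--space control into the fixed--time bound for $\sum_{\mu+|\alpha|\le M,\ \mu\le\mu_0}\|L^{\mu}\partial^{\alpha}\partial u(t,\cdot)\|$, the growth $(1+t)^{C'\varepsilon}$ stemming from the $(1+t)^{A\delta}$ weight in \eqref{spacenl6}.

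The hard part will be the vector--field and derivative bookkeeping in the second step: one must check that in \emph{every} term of the Leibniz expansion of the quadratic (and higher order) nonlinearities one factor stays within the $\le24$--derivative range (resp. $\le32$ for the $L^{\le2}$ norm in \eqref{induction2}) on which the pointwise decay $\varepsilon(1+t)^{-1}$ is available, or else can be routed into a weighted $L^2$ norm, while the complementary factor stays simultaneously within scaling order $\le\mu_0$ and within the derivative budgets $M-2$, $M-3$, $M-5$ --- and this matching is exactly what forces $M\le68-8\mu_0$. A secondary difficulty is that the commutators $[L^{\mu},\Box]$ and the truncation $[\Box,\zeta]u$ generate terms supported near $\partial\mo$ and near the light cone, for which one must carefully combine finite propagation speed with Lemma \ref{Exponetial} to keep all contributions below $(1+t)^{C'\varepsilon+C'\sigma}$.
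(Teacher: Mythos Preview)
Your proposal is correct and follows essentially the same route as the paper's Part II: induction on $\mu_0$, coupling Proposition~\ref{MKSS}, Proposition~\ref{Integral}, Lemma~\ref{Rotationnl}, and Lemma~\ref{Spacelocal} with a high--low Leibniz split of the quadratic nonlinearity and Gronwall. Two small corrections: the paper does \emph{not} invoke the null structure (Lemma~\ref{Null}) at this stage---since polynomial growth $(1+t)^{C'\varepsilon+C'\sigma}$ is permitted in \eqref{auxil2}--\eqref{auxil1.1}, the $\varepsilon/(1+t)$ pointwise decay of the low--derivative factor coming from \eqref{reduce2} and \eqref{induction1} alone suffices (the null gain is reserved for Part III, where the sharp $(1+t)^{-1}$ decay must be recovered); and you must not cite \eqref{induction2} as input, since it is a \emph{consequence} of Proposition~\ref{Auxiliary} (via \eqref{induction5} and Theorem~\ref{Point}) and using it here would be circular---only \eqref{reduce2} and the bootstrap assumption \eqref{induction1} are available for pointwise control.
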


We shall postpone the proof of Proposition \ref{Auxiliary} in the subsequent {\bf Part II}. Now
we give some illustrations for this Proposition. When $\mu_0=0$ in \eqref{auxil1},
starting from \eqref{Proof6}, by an induction argument on $M$, \eqref{auxil2} and \eqref{auxil1.1}  will stand for $\mu_0=0,1$ respectively.  Other left cases can be also obtained by induction argument both on $\mu_0$ and $M$.

Based on Proposition \ref{Auxiliary} and the fact that $u=w+u_0$ with $u_0$ satisfying \eqref{reduce2}, \eqref{induction3} comes from \eqref{auxil1.1} for the case $\mu_0=0$ and $C'\varepsilon+C'\sigma=1/20$;
\eqref{induction4} and \eqref{induction5} are derived from \eqref{auxil2} for the case $\mu_0=3$ and $C'\varepsilon+C'\sigma=1/10$.
In addition, \eqref{induction2} follows from Theorem \ref{Point}, \eqref{reduce2}, \eqref{reduce4} and \eqref{induction5} via a direct verification.

\vskip 0.2 true cm

{\bf Part II. The proof of Proposition \ref{Auxiliary}}

\vskip 0.2 true cm

It suffices to prove \eqref{auxil2} and \eqref{auxil1.1} for $\mu_0=0, 1$ since the other cases can be treated in the same way. The proof is divided into three steps.\vskip 0.2 true cm

 {\bf Step 1. Proof of \eqref{auxil2} for $\mu_0=0$}\vskip 0.2 true cm

Since $u=w+u_0$, by \eqref{reduce2} and \eqref{KKspace} in Proposition \ref{MKSS}, we have
\begin{equation}\label{auxil3}\begin{aligned}
&(\ln(2+t))^{-1/2}\sum\limits_{|\alpha|\leq M-2}\|\left<x\right>^{-1/2}\partial^{\alpha}\partial u\|_{L^2(S_t)}\\[2mm]
\lesssim&\varepsilon+(\ln(2+t))^{-1/2}\sum\limits_{|\alpha|\leq M-2}\|\left<x\right>^{-1/2}\partial^{\alpha}\partial w\|_{L^2(S_t)}\\[2mm]
\lesssim&\varepsilon +\sum\limits_{|\alpha|\leq M-1}\int_0^{t}\|\partial^{\alpha}\Box w\|ds+\sum\limits_{|\alpha|\leq M-2}\|\partial^{\alpha}\Box w\|_{L^2(S_t)}\\[2mm]
\lesssim &\varepsilon+\sum\limits_{|\alpha|\leq M-1}\int_0^{t}\|\partial^{\alpha}\Box u(s, \cdot)\| ds+\sum\limits_{|\alpha|\leq M-2}\|\partial^{\alpha}\Box u\|_{L^2(S_t)}.
\end{aligned}
\end{equation}
If $M\leq 24$, it follows from \eqref{reduce2}, \eqref{induction1} and  equation \eqref{Proof1} that
\begin{equation*}
\sum\limits_{|\alpha|\leq 23}|\partial^{\alpha}\Box u|\lesssim \sum\limits_{|\alpha|\leq 24}|\partial^{\alpha}\partial u|^2\lesssim\frac{\varepsilon}{1+t}\sum\limits_{|\alpha|\leq 24}|\partial^{\alpha}\partial u|.
\end{equation*}
Combining this with \eqref{Proof6} and \eqref{auxil3} yields
\begin{equation}\label{auxil4}
(\ln(2+t))^{-1/2}\sum\limits_{|\alpha|\leq M-2}\|\left<x\right>^{-1/2}\partial^{\alpha}\partial u\|_{L^2(S_t)}\lesssim \varepsilon (1+t)^{C\varepsilon+\sigma}.
\end{equation}
If $24<M\leq 68$, by repeating the argument for \eqref{Proof8} and applying \eqref{auxil1} for $\mu_0=0$, we then arrive at
\begin{equation*}\begin{aligned}
&(\ln(2+t))^{-1/2}\sum\limits_{|\alpha|\leq M-2}\|\left<x\right>^{-1/2}\partial^{\alpha}\partial u\|_{L^2(S_t)}\\[2mm]
\lesssim& \varepsilon (1+t)^{2C\varepsilon+2\sigma}+\sum\limits_{|\alpha|\leq\max(M-22, 2+M/2)}\|\left<x\right>^{-1/2}Z^{\alpha}\partial u\|_{L^2(S_t)}^2\\[2mm]
&+\sup\limits_{0\leq s\leq t}\left(\sum\limits_{|\alpha|\leq M-6}\|Z^{\alpha}\partial u(s, \cdot)\|\right)\sum\limits_{|\alpha|\leq\max(M-22, 2+M/2)}\|\left<x\right>^{-1/2}Z^{\alpha}\partial u\|_{L^2(S_t)}\\[2mm]
\lesssim& \varepsilon (1+t)^{2C\varepsilon+2\sigma}.
\end{aligned}
\end{equation*}
Combining this with \eqref{auxil4} yields for $M\leq 68$
\begin{equation}\label{auxil5}
\sum\limits_{|\alpha|\leq M-2}\|\left<x\right>^{-1/2}\partial^{\alpha}\partial u\|_{L^2(S_t)}\lesssim \varepsilon (1+t)^{2C\varepsilon+2\sigma}\ln(2+t).
\end{equation}
This derives the estimate of the first term in the left hand side of \eqref{auxil2}.

Next, we estimate the second term in the left hand side of \eqref{auxil2}.
For $E_{\mu, \nu}(t)$ defined by \eqref{BE}, with the help of \eqref{reduce2} and \eqref{induction1} which can be used
to deal with the cubic terms in $\mathcal{N}(\partial u, \partial^2 u)$, we have
\begin{equation}\label{5.1}\begin{aligned}
&\sum\limits_{|\alpha|\leq M-3}\|\Box_h Z^{\alpha}u(t, \cdot)\|\\[2mm]
\lesssim&\sum\limits_{|\alpha|+|\beta|\leq M-3}\|Z^{\alpha}\partial u(t, \cdot)Z^{\beta}\partial u(t, \cdot)\|+\sum\limits_{|\alpha|+|\beta|\leq M-3\atop |\alpha|\geq 1}\|Z^{\alpha}\partial u(t, \cdot)Z^{\beta}\partial^2 u(t, \cdot)\|\\[2mm]
\lesssim&\sum\limits_{|\alpha|\leq M-3\atop |\beta|\leq 24}\|Z^{\alpha}\partial u(t, \cdot)\| \|Z^{\beta}\partial u(t, \cdot)\|_{\infty}+\sum\limits_{|\alpha|\leq M-27;\ |\beta|\leq M-27}\|Z^{\alpha}\partial u(t, \cdot)Z^{\beta}\partial u(t, \cdot)\|\\[2mm]
\lesssim&\frac{\varepsilon}{1+t}E_{0, M-3}^{1/2}(t)+\sum\limits_{|\alpha|\leq M-25}\|\left<x\right>^{-1/2}Z^{\alpha}\partial u(t, \cdot)\|^2,
\end{aligned}
\end{equation}
here we point out that the last inequality comes from \eqref{reduce2}, \eqref{induction1} and Lemma \ref{Sobolev}.
Substituting \eqref{5.1} into \eqref{rotationnl} derives
\begin{equation*}\begin{aligned}
\partial_0 E_{0, M-3}(t)\lesssim&\frac{\varepsilon}{1+t}E_{0, M-3}(t)
+\sum\limits_{|\alpha|\leq M-25}\|\left<x\right>^{-1/2}Z^{\alpha}\partial u(t, \cdot)\|^2 E_{0, M-3}^{1/2}(t)\\[2mm]
&+\sum\limits_{|\alpha|\leq M-2}\|\left<x\right>^{-1/2}\partial^{\alpha}\partial u(t, \cdot)\|^2.
\end{aligned}
\end{equation*}
Then it follows from Gronwall's inequality, \eqref{Start} and \eqref{auxil5} that
\begin{equation}\label{Proof8.1}\begin{aligned}
\sum\limits_{|\alpha|\leq M-3}\|Z^{\alpha}\partial u(t, \cdot)\|^2\lesssim& E_{0, M-3}(t)\\[2mm]
\lesssim& (1+t)^{C\varepsilon}\biggl(\varepsilon^2+\sum\limits_{|\alpha|\leq M-25}\|\left<x\right>^{-1/2}Z^{\alpha}\partial u(t, \cdot)\|_{L^2(S_t)}^2\sup\limits_{0<s<t}E_{0, M-3}^{1/2}(s)\\[2mm]
&+\sum\limits_{|\alpha|\leq M-2}\|\left<x\right>^{-1/2}\partial^{\alpha}\partial u(t, \cdot)\|_{L^2(S_t)}^2\biggr)\\[2mm]
\lesssim& \varepsilon (1+t)^{C'\varepsilon+C'\sigma}.
\end{aligned}
\end{equation}
Here, the last inequality comes from \eqref{auxil1} for $\nu=0$. Analogously, the estimate on the third term in \eqref{auxil2} comes from \eqref{KKrotation}, \eqref{Proof8.1} and \eqref{auxil1} for $\mu_0=0$.\qed
\vskip 0.2 true cm

{\bf Step 2. Proof of \eqref{auxil1.1} for $\mu_0=0$}\vskip 0.2 true cm

It derives from \eqref{Proof8} and \eqref{auxil2} for $\mu_0=0$ that for arbitrarily $\sigma>0$,
\begin{equation}\label{Proof9}
\mathcal{E}_{0, M}^{1/2}(t)\lesssim \varepsilon (1+t)^{C\varepsilon+\sigma}.
\end{equation}
In addition, based on \eqref{auxil2} for $\mu_0=0$ and \eqref{Proof9}, by Lemma \ref{Elliptic} with the similar analysis as \eqref{Proof3.1}, one obtains \begin{equation*}\sum\limits_{|\alpha|\leq M}\|\partial^{\alpha}\partial u(t, \cdot)\|\lesssim\varepsilon (1+t)^{C\varepsilon+\sigma}.
\end{equation*}
This shows \eqref{auxil1.1} for $\mu_0=0$.\qed

\vskip 0.2 true cm

{\bf Step 3. Proof of \eqref{auxil2} and \eqref{auxil1.1} for $\mu_0=1$}\vskip 0.2 true cm

 At first, we establish a suitable version
of  \eqref{spacenl5} for $N_0+\mu_0\leq 60$ and $\mu_0=1$. It is noticed that for $M\leq 60$,
\begin{equation*}\begin{aligned}
&\sum\limits_{\mu+\alpha\leq M\atop \mu\leq 1}\left(|\mathcal{L}^{\mu}\partial_0^{\alpha}\Box_h u|+|[\mathcal{L}^{\mu}\partial_0^{\alpha}, h^{\alpha\beta}\partial_{\alpha\beta}^2]u|\right)\\[2mm]
\lesssim&\left(\sum\limits_{|\alpha|\leq M-1}|\mathcal{L}\partial_0^{\alpha}\partial u|+\sum\limits_{|\alpha|\leq M-2}|\mathcal{L}\partial_0^{\alpha}\partial^2 u|\right)\sum\limits_{|\beta|\leq 24}|\partial^{\beta}\partial u|\\[2mm]
&+\sum\limits_{|\alpha|\leq M-25}|L\partial^{\alpha}\partial u| \sum\limits_{|\beta|\leq M}|\partial^{\beta}\partial u|+\sum\limits_{|\alpha|\leq M}|\partial^{\alpha}\partial u|\sum\limits_{|\beta|\leq\max(M-24, M/2)}|\partial^{\beta}\partial u|.
\end{aligned}
\end{equation*}
Combining this with \eqref{reduce2}, \eqref{induction1}, Lemma \ref{Sobolev} and Lemma \ref{Elliptic} yields that for $M\leq 60$,
\begin{equation*}\begin{aligned}
&\sum\limits_{\mu+|\alpha|\leq M\atop \mu\leq 1}\left(\|\mathcal{L}\partial_0^{\alpha}\Box_h u(t, \cdot)\|+\|[\mathcal{L}\partial_0^{\alpha}, h^{\alpha\beta}\partial_{\alpha\beta}^2]u(t, \cdot)\|\right)\\[2mm]
\lesssim&\frac{\varepsilon}{1+t}\sum\limits_{\mu+|\alpha|\leq M\atop\mu\leq 1}\|\mathcal{L}\partial_0^{\alpha}\partial u(t, \cdot)\|+\sum\limits_{|\alpha|\leq\max(M, 2+M/2)}\|\left<x\right>^{-1/2}Z^{\alpha}\partial u(t, \cdot)\|^2\\[2mm]
&+\sum\limits_{|\alpha|\leq M-25}\|\left<x\right>^{-1/2}L\partial^{\alpha}\partial u(t, \cdot)\|\sum\limits_{|\alpha|\leq 34}\|\left<x\right>^{-1/2}Z^{\alpha}\partial u(t, \cdot)\|.
\end{aligned}
\end{equation*}
Based on this, for small $\varepsilon>0$,  \eqref{spacenl5} holds with $\delta=C\varepsilon$ and
$$H_{1, M-1}(t)=\sum\limits_{|\alpha|\leq M-25}\|\left<x\right>^{-1/2}L\partial^{\alpha}\partial u(t, \cdot)\|^2+\sum\limits_{|\alpha|\leq 62}\|\left<x\right>^{-1/2}Z^{\alpha}\partial u(t, \cdot)\|^2.$$

Since $\mathcal{E}_{\mu, \nu}(0)\lesssim \varepsilon$ for $\mu+\nu\leq 68$ by \eqref{Start}, it follows from \eqref{spacenl6} and \eqref{auxil2} for $\mu_0=0$
that for $M\leq 60$,
\begin{equation}\label{Proof12}\begin{aligned}
&\sum\limits_{\mu+|\alpha|\leq M\atop \mu\leq 1}\|L^{\mu}\partial^{\alpha}\partial u(t, \cdot)\|\\[2mm]
\lesssim&\varepsilon (1+t)^{C\varepsilon+\sigma}+(1+t)^{C\varepsilon}\sum\limits_{|\alpha|\leq M-25}\|\left<x\right>^{-1/2}L\partial^{\alpha}\partial u\|_{L^2(S_t)}^2\\[2mm]
&+(1+t)^{C\varepsilon}\int_0^{t}\sum\limits_{|\alpha|\leq M+1}\|\partial^{\alpha}
\partial u(s, \cdot)\|_{L^2(|x|\leq 1)}ds.
\end{aligned}
\end{equation}
By \eqref{spacelo2}, one has that
\begin{equation}\label{5.2}
\int_0^{t}\sum\limits_{|\alpha|\leq M+1}\|\partial^{\alpha}\partial w(s, \cdot)\|_{L^2(|x|\leq 2)}ds
\lesssim\sum\limits_{|\alpha|\leq M+2}\int_0^t(\int_0^s\|\partial^{\alpha}
\Box w(\tau, \cdot)\|_{L^2(\{x: ||x|-(s-\tau)|<10\}\cap\mo)}d\tau) ds.
\end{equation}
In addition, it follows from \eqref{reduce2} that
\begin{equation}\label{5.3}\begin{aligned}
&\sum\limits_{|\alpha|\leq M+1}\int_0^{t}\|\partial^{\alpha}\partial u(s, \cdot)\|_{L^2(|x|\leq 2)}ds\\[2mm]
\lesssim&\varepsilon\ln(2+t)+\sum\limits_{|\alpha|\leq M+2}\int_0^{t}(\int_0^{s}\|\partial^{\alpha}\Box u(\tau, \cdot)\|_{L^2(\{x: ||x|-(s-\tau)|<10\}\cap\mo)}d\tau)ds.
\end{aligned}
\end{equation}
Note that
\begin{equation*}
\sum\limits_{|\alpha|\leq M+2}|\partial^{\alpha}\Box u|\lesssim \sum\limits_{|\alpha|\leq M+3}|\partial^{\alpha}\partial u|\sum\limits_{|\alpha|\leq 1+M/2}|\partial^{\alpha}\partial u|.
\end{equation*}
This, together with Lemma \ref{Sobolev} and the fact of $M\leq 60$, yields
\begin{equation}\label{5.4}
\sum\limits_{|\alpha|\leq M+2}\|\partial^{\alpha}\Box u(\tau, \cdot)\|_{L^2(\{x:||x|-(s-\tau)|<10\}\cap\mo)}
\lesssim \sum\limits_{|\alpha|\leq 63}\|\left<x\right>^{-1/2}Z^{\alpha}\partial u\|_{L^2(\{x: ||x|-(s-\tau)|<20\}\cap\mo)}^2.
\end{equation}
Thus for $M\leq 60$, by \eqref{auxil2} in Lemma \ref{Auxiliary} and \eqref{5.3}-\eqref{5.4}, we have
\begin{equation*}\begin{aligned}
\sum\limits_{|\alpha|\leq M+1}\int_0^{t}\|\partial^{\alpha}\partial u(s, \cdot)\|_{L^2(|x|\leq 1)}ds\lesssim& \varepsilon\ln(2+t)+\sum\limits_{|\alpha|\leq 63}\|\left<x\right>^{-1/2}Z^{\alpha}\partial u\|_{L^2(S_t)}^2\\[2mm]
&\lesssim \varepsilon (1+t)^{C\varepsilon+\sigma}.
\end{aligned}
\end{equation*}
Meanwhile, by \eqref{Proof12} and \eqref{5.2}-\eqref{5.4}, we arrive at
\begin{equation}\label{Proof12.1}\begin{aligned}
&\sum\limits_{\mu+|\alpha|\leq M\atop\mu\leq 1}\|L^{\mu}\partial^{\alpha}\partial u(t, \cdot)\|\\[2mm]
\lesssim&\varepsilon (1+t)^{C\varepsilon +\sigma}
+(1+t)^{C\varepsilon}\sum\limits_{|\alpha|\leq M-25}\|\left<x\right>^{-1/2}L\partial^{\alpha}\partial u\|_{L^2(S_t)}^2.
\end{aligned}
\end{equation}
This gives the desired bounds for $M\leq 24$.

If we utilize \eqref{KKspace} with $\mu_0=1$ and $N_0+\mu_0=60$, then by analogous proof of Lemma \ref{Auxiliary}
when $M=68$ is replaced by $M=60$ and $u$ is replaced by $Lu$ respectively, we can get
\begin{equation}\label{Proof13}\begin{aligned}
&\sum\limits_{\mu+|\alpha|\leq 58\atop \mu\leq 1}\|\left<x\right>^{-1/2}L^{\mu}\partial^{\alpha}\partial w\|_{L^2(S_t)}
+\sum\limits_{\mu+|\alpha|\leq 57\atop \mu\leq 1}\|L^{\mu}Z^{\alpha}\partial w(t, \cdot)\|\\
&+\sum\limits_{\mu+|\alpha|\leq 55\atop \mu\leq 1}\|\left<x\right>^{-1/2}L^{\mu}Z^{\alpha}\partial w\|_{L^2(S_t)}\\[2mm]
\lesssim&\varepsilon (1+t)^{C\varepsilon+C\sigma}.
\end{aligned}
\end{equation}
This is just only \eqref{auxil2} of $\mu_0=1$.
Consequently, it follows from \eqref{Proof12.1} and \eqref{Proof13} that \eqref{auxil1.1} stands for $\mu_0=1$.
\vskip 0.2 true cm

{\bf Part III. Proof of \eqref{induction1.1}}
\vskip 0.2 true cm

By Theorem \ref{Point} and \eqref{reduce1}-\eqref{reduce4}, in order to prove \eqref{induction1.1},
it suffices to show
\begin{equation}\label{proof1}
I+II\lesssim \varepsilon^2,
\end{equation}
where
\begin{equation*}\begin{aligned}
I&=\int_0^{t}\int_{\mo}\sum\limits_{\nu+|\beta|\leq 29\atop \nu\leq 1}|L^{\nu}Z^{\beta}\mathcal{N}(\partial u, \partial^2 u)(s, y)|\frac{dy ds}{|y|},\\[2mm]
II&=\int_0^{t}\sum\limits_{\nu+|\beta|\leq 26\atop \nu\leq 1}\|L^{\nu}\partial^{\beta}\mathcal{N}(\partial u, \partial^2 u)(s, \cdot)\|_{L^2(|x|\leq 2)}ds.
\end{aligned}
\end{equation*}
By \eqref{Null1}, we write
\begin{equation}\label{proof2}
\mathcal{N}(\partial u, \partial^2 u)=\mathcal{S}^{\alpha\beta}\partial_{\alpha}u\partial_{\beta}u+\mathcal{Q}_{\mu}^{\alpha\beta}\partial_{\mu}u\partial_{\alpha\beta}^2u
+\mathcal{R}(\partial u, \partial^2 u),
\end{equation}
where $\mathcal{R}(\partial u, \partial^2 u)$ is the cubic term and linear in $\p^2 u$. Making use of Lemma \ref{Null},
one has
\begin{equation}\label{proof3}\begin{aligned}
&\sum\limits_{\nu+|\beta|\leq 29\atop\nu\leq 1}|L^{\nu}Z^{\beta}\left(\mathcal{S}^{\alpha\gamma}\partial_{\alpha}u\partial_{\gamma}u
+\mathcal{Q}_{\mu}^{\alpha\gamma}\partial_{\mu}u\partial_{\alpha}\partial_{\gamma}u\right)|\\[2mm]
\lesssim&\frac{1}{|y|}\sum\limits_{\mu+|\alpha|\leq 31\atop \mu\leq 2}|L^{\mu}Z^{\alpha}u|\sum\limits_{\mu+|\alpha|\leq 31\atop \mu\leq 2}|L^{\mu}Z^{\alpha}\partial u|\\[2mm]
&+\frac{\left<t-r\right>}{\left<t+r\right>}\sum\limits_{\mu+|\alpha|\leq 29\atop \mu\leq 1}|L^{\mu}Z^{\alpha}\partial u|\sum\limits_{\mu+|\alpha|\leq 29\atop \mu\leq 1}|L^{\mu}Z^{\alpha}\partial^2 u|\\[2mm]
=&I_1+I_2.
\end{aligned}
\end{equation}
By \eqref{induction2} and \eqref{induction5} with \eqref{reduce2}, for $0<\delta<1/10$,
\begin{equation}\label{proof4}\begin{aligned}
\int_0^{t}\int_{\mo}I_1\frac{dy ds}{|y|}
\leq& C(\delta)\varepsilon\int_0^{t}\sum\limits_{\mu+|\alpha|\leq 32\atop \mu\leq 2}\|\left<\cdot\right>^{-1/2}L^{\mu}Z^{\alpha}\partial u(s, \cdot)\|\left<s\right>^{-4/5+\delta}ds\\[2mm]
=:& C(\delta)\ve I_{11},
\end{aligned}
\end{equation}
where the first inequality comes from the Young inequality together with \eqref{induction5} and \eqref{reduce2}.
Direct computation yields
\begin{equation}\label{Proof4.0}\begin{aligned}
I_{11}^2\lesssim&\left<t\right>^{-2/5}\sum\limits_{\mu+|\alpha|\leq 32\atop\mu\leq 2}\|\left<\cdot\right>L^{\mu}Z^{\alpha}\partial u(s, \cdot)\|_{L^2(S_t)}^2+\int_0^{t}\sum\limits_{\mu+|\alpha|\leq 32\atop\mu\leq 2}\left<s\right>^{-7/5}\|\left<\cdot\right>^{-1/2}L^{\mu}Z^{\alpha}\partial u\|_{L^2(S_s)}^2 ds\\[2mm]
\lesssim&\varepsilon^2.
\end{aligned}
\end{equation}
Substituting this into \eqref{proof4} yields that
\begin{equation}\label{proof4.1}
\int_0^{t}\int_{\mo}I_1\frac{dyds}{|y|}\lesssim \varepsilon^2.
\end{equation}

In addition, we have that from the proof of Lemma \ref{Inter},
\begin{equation*}\begin{aligned}
&\int_0^{t}\int_{\mo}I_2\frac{dy ds}{|y|}\\[2mm]
\lesssim&\int_0^{t}\left<s\right>^{-1}(\sum\limits_{\mu+|\alpha|\leq 31\atop \mu\leq 2}\|L^{\mu}Z^{\alpha}\partial u(s, \cdot)\|+\sum\limits_{\mu+|\alpha|\leq 30\atop \mu\leq 1}\|\left<t+r\right>L^{\mu}Z^{\alpha}\Box u(s, \cdot)\|)\\[2mm]
&\quad \times\sum\limits_{\mu+|\alpha|\leq 30\atop \mu\leq 1}\|\left<y\right>^{-1}L^{\mu}Z^{\alpha}\partial u(s, \cdot)\|ds\\[2mm]
&+\int_0^{t}\sum\limits_{R=2^{\kappa}<t/2}\sum\limits_{\mu+|\alpha|\leq 30\atop \mu\leq 1}\|\left<y\right>^{-1}(L^{\mu}Z^{\alpha}\partial u, L^{\mu}Z^{\alpha}u)(s, \cdot)\|_{L^2(R/4\leq |x|\leq 2R)}\\[2mm]
&\quad \times\sum\limits_{\mu+|\alpha|\leq 30\atop \mu\leq 1}
\|\left<y\right>^{-1}L^{\mu}Z^{\alpha}\partial u(s, \cdot)\|_{L^2(R/2\leq |x|\leq R)}\\[2mm]
=&I_{21}+I_{22}.
\end{aligned}
\end{equation*}
Similar to the estimate for $I_1$, it follows from \eqref{reduce2}, \eqref{induction2} and \eqref{induction5} that
\begin{equation}\label{proof5}\begin{aligned}
I_{22}
\lesssim&\varepsilon \int_0^{t}(\ln(2+s))^2\left<s\right>^{-4/5}\sum\limits_{\mu+|\alpha|\leq 30\atop \mu\leq 1}\|\left<y\right>^{-1/2}L^{\mu}Z^{\alpha}\partial u(s, \cdot)\| ds\\[2mm]
\lesssim&\varepsilon\sum\limits_{\mu+|\alpha|\leq 30\atop \mu\leq 1}\|\left<s\right>^{-7/25}\left<y\right>^{-1}L^{\mu}Z^{\alpha}\partial u(s, y)\|_{L^2{(S_t)}}\\[2mm]
\lesssim& \varepsilon^2,
\end{aligned}
\end{equation}
here the last inequality is derived as for the estimate in \eqref{Proof4.0}.

In the similar way, it follows from \eqref{reduce2}, \eqref{induction2} and \eqref{induction4}-\eqref{induction5}  that
\begin{equation}\label{proof6}\begin{aligned}
I_{21}\lesssim& \varepsilon\int_0^{t}\left<s\right>^{-1+1/10}(1+\ln(2+s)\left<s\right>^{1/5})\sum\limits_{\mu+|\alpha|\leq 30\atop \mu\leq 1}\|\left<y\right>^{-1}L^{\mu}Z^{\alpha}\partial u(s, \cdot)\|ds\\[2mm]
\lesssim& \varepsilon^2
\end{aligned}
\end{equation}
holds by applying the fact that
\begin{equation*}\begin{aligned}
&\left(s+r\right>\sum\limits_{\mu+|\alpha|\leq 30\atop \mu\leq 1}|L^{\mu}Z^{\alpha}\Box u|\lesssim \left<s+r\right>\sum\limits_{\mu+|\alpha|\leq 31\atop\mu\leq 1}|L^{\mu}Z^{\alpha}\partial u|^2\\[2mm]
&\qquad \lesssim \varepsilon \ln(2+s) (1+s)^{1/5}\sum\limits_{\mu+|\alpha|\leq 31\atop \mu\leq 1}|L^{\mu}Z^{\alpha}\partial u|.
\end{aligned}
\end{equation*}

Based on Theorem \ref{Point}, \eqref{reduce2} and \eqref{induction2}-\eqref{induction5}, it is clear that the cubic term $\mathcal{R}(\partial u, \partial^2 u)$ in \eqref{proof2} admits the following estimates
\begin{equation*}
\int_0^{t}\int_{\mo}\sum\limits_{\mu+|\alpha|\leq 30\atop \mu\leq 1}|L^{\mu}Z^{\alpha}\mathcal{R}(\partial u, \partial^2 u)|(s, y)\frac{dy ds}{|y|}\lesssim \varepsilon^2.
\end{equation*}
Combining this with \eqref{proof2}-\eqref{proof6} yields that $I$ satisfies estimate \eqref{proof1}. We now deal with $II$ in \eqref{proof1}.
It follows from direction verification and \eqref{induction2} that
\begin{equation*}
II\lesssim\varepsilon\int_0^{t}\left<s\right>^{-4/5}\ln(2+s)\sum\limits_{\mu+|\alpha|\leq 31\atop \mu\leq 1}\|L^{\mu}Z^{\alpha}\partial u(s, \cdot)\|_{L^2(|y|\leq 2)}ds\lesssim \varepsilon^2.
\end{equation*}
This, together with the estimate of $I$, shows that \eqref{induction1.1} can be derived by \eqref{induction2}-\eqref{induction5}.
Thus  the proof of Theorem \ref{them1} is completed by the continuity induction argument together with the local existence of
problem \eqref{Mpro} in Sect.\ref{VI} below.

\section{Local existence of problem \eqref{Mpro}}\label{VI}

In this section, we establish the local existence of solution to general $n$-dimensional problem \eqref{Mpro} for $n\geq 2$
under  ``admissible condition'' \eqref{AC} and the compatible conditions of
the initial-boundary values. In addition, we assume $\mo$ is the exterior domain of $n$-dimensional
compact convex domain $\mathcal{K}$ with smooth boundary.

Using the notation $J_k u=\{\nabla^{\alpha} u: 1\leq |\alpha|\leq k\}$,
problem \eqref{Mpro}  can be rewritten as
\begin{equation}\label{Jpro}\begin{cases}
\Box u=\mathcal{Q}^{\alpha\beta}(\partial_0 u, J_1 u)\partial_{\alpha\beta}^2u+\mathcal{S}(\partial_0 u, J_1 u),\quad \ (t, x)\in\mathbb{R}_{+}\times\mo,\\[2mm]
\partial_{\boldsymbol{\nu}}u=0,\qquad \qquad  \qquad \qquad \qquad \qquad \qquad  (t, x)\in\mathbb{R}_{+}\times\partial\mo,\\[2mm]
(u, \partial_t u)(0, x)=\varepsilon (u_0, u_1)(x),\qquad\qquad   \ x\in\mo.
\end{cases}
\end{equation}
In addition, it is  assumed that for some fixed positive constant $M$ and integer $s>6+3n/2$,
\begin{equation}\label{JInitial}
\|u_0\|_{H^{s+1}}+\|u_1\|_{H^{s}}\leq M.
\end{equation}

\subsection{Description of compatible conditions}\label{VI.1}

Due to the smallness of solution $u$ and the form
of $\mathcal{Q}^{\alpha\beta}(\partial_0 u, J_1 u)$ in \eqref{Null1}, without loss of generality,
it is assumed that
\begin{equation}\label{assume}
\mathcal{Q}^{00}=0,\ \ \sum\limits_{\alpha, \beta=0}^{n}|\mathcal{Q}^{\alpha\beta}|\leq 1/2.
\end{equation}
In this situation, the equation in \eqref{Jpro} has the form
\begin{equation*}
\partial_0^2 u=\mathcal{F}(J_1\partial_0 u, J_2 u)
\end{equation*}
for certain function $\mathcal{F}$ smooth in its arguments. Set $\psi_0=\varepsilon u_0, \psi_1=\varepsilon u_1$ and $\psi_2=\mathcal{F}(J_2 u_0, J_1 u_1)$.
As shown in Sect.9 of \cite{KSS2}, for $k\in\Bbb N$, there exists a compatible function sequence $\{\psi_k\}$ such that
the smooth solution $u$ to problem \eqref{Jpro} satisfies
\begin{equation}\label{psi}
\partial_0^k u(0, x)=\psi_k\equiv\psi_k(J_k u_0, J_{k-1}u_1).
\end{equation}
Then the compatible conditions to problem \eqref{Jpro} can be stated as:

\begin{defn}\label{Definition} Under assumption \eqref{assume}, the compatible conditions
for problem \eqref{Jpro} are called to be of $s-$order if $\partial_{\boldsymbol{\nu}}\psi_j$ vanishes
on $\mathbb{R}_{+}\times\partial\mo$ for all $0\leq j\leq s-1$.
\end{defn}

It is well known that the compatible conditions are necessary to obtain
the local existence of smooth solutions to the initial-boundary value problem \eqref{Jpro}.
The following result is helpful to choose the iteration scheme for
establishing the local existence of \eqref{Jpro}.

\begin{lem}\label{Compatible} {\bf (See Lemma 9.1 and Lemma 9.3 in \cite{KSS2})}
Under assumption \eqref{JInitial} for $s>6+3n/2$ and the compatible conditions
of $s-$order, let $\{\psi_k(J_k u_0, J_{k-1}u_1)\}$ be the compatible functions for problem \eqref{Jpro}
introduced in \eqref{psi}, then one has

\noindent {\bf (A)} $\psi_k(J_k u_0, J_{k-1}u_1)\in H^{s+1-k}(\mo)$ for $0\leq k\leq s+1$ and
\begin{equation*}
\sum\limits_{0\leq k\leq s+1}\|\psi_k(J_k u_0, J_{k-1}u_1)\|_{H^{s+1-k}}\lesssim\varepsilon \left(\|u_0\|_{H^{s+1}}+\|u_1\|_{H^{s}}\right).
\end{equation*}

\noindent {\bf (B)} Suppose that $v(t, x)$ is a function such that for some $T>0$
and $0\leq j\leq s+1$,
\begin{equation*}
\partial_0^j v\in C([0, T); H^{s+1-j}(\mo)),
\end{equation*}
and for $0\leq k\leq s+1$,
\begin{equation*}
\partial_0^k v(0, \cdot)=\psi_k(J_k u_0, J_{k-1}u_1),\ \partial_{\boldsymbol{\nu}}v=0\ \ \text{on}\ \ \mathbb{R}_{+}\times\partial\mo.
\end{equation*}
Let $\bar\psi_j$ be the compatible function of $j-$order for the problem
\begin{equation}\label{Linear}\begin{cases}
\Box u=\mathcal{Q}^{\alpha\beta}(\partial_0 v, J_1 v)\partial_{\alpha\beta}u+\mathcal{S}(\partial_0 v, J_1 v),\ \ (t, x)\in\mathbb{R}_{+}\times\mo,\\[2mm]
\partial_{\boldsymbol{\nu}}u=0,\qquad\qquad  \qquad\qquad \ (t, x)\in\mathbb{R}_{+}\times\partial\mo,\\[2mm]
u(0, x)=u_0,\ \partial_t u(0, x)=u_1,\qquad  x\in\mo.
\end{cases}
\end{equation}
Then for $0\leq k\leq s+1$,
\begin{equation*}
\bar\psi_k=\psi_k(J_k u_0, J_{k-1}u_1).
\end{equation*}

\end{lem}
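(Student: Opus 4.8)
The plan is to carry out the classical compatibility-function recursion for quasilinear wave equations, exactly as in Sect.~9 of \cite{KSS2}, and to check that the passage from Dirichlet to Neumann data is covered by the ``admissible condition'' \eqref{AC}. Recall that under \eqref{assume} the equation in \eqref{Jpro} can be solved for $\partial_0^2 u$, yielding $\partial_0^2 u=\mathcal{F}(J_1\partial_0 u,J_2 u)$ with $\mathcal F$ smooth and $\mathcal F(0)=0$. Applying $\partial_0^{k-2}$ to this identity and evaluating at $t=0$ expresses $\partial_0^k u(0,\cdot)$ as a finite sum of monomials, each a product of (bounded smooth functions of) spatial derivatives of $\partial_0^j u(0,\cdot)$, $0\le j\le k-1$, with no zeroth-order term in the $\partial_0^j u(0,\cdot)$ for $j\ge 1$; starting from $\psi_0=\varepsilon u_0$, $\psi_1=\varepsilon u_1$, this produces the functions $\psi_k=\psi_k(J_k u_0,J_{k-1}u_1)$ of \eqref{psi}.

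For part \textbf{(A)}, I would estimate every monomial occurring in $\psi_k$ by the Moser-type product inequalities in the Sobolev spaces $H^{s+1-k}(\mo)$; the hypothesis $s>6+3n/2$ guarantees that enough of these spaces are algebras (and pair correctly against the lower ones) for $0\le k\le s+1$ and that one extra spatial derivative is always available. Since each such monomial carries at least one factor among $\psi_0,\dots,\psi_{k-1}$ and $\psi_0=\varepsilon u_0$, $\psi_1=\varepsilon u_1$ already carry a power of $\varepsilon$, an induction on $k$ shows $\psi_k=\varepsilon\,\Phi_k$ with $\Phi_k$ a (finite-order) polynomial expression in $\varepsilon$ and in the derivatives of $u_0,u_1$; using \eqref{JInitial} and the smallness of $\varepsilon$ to bound the Sobolev norms of the frozen smooth functions and to absorb the genuinely nonlinear monomials into the linear ones then gives $\sum_{0\le k\le s+1}\|\psi_k\|_{H^{s+1-k}}\lesssim\varepsilon(\|u_0\|_{H^{s+1}}+\|u_1\|_{H^{s}})$.

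For part \textbf{(B)}, the point is that the compatibility functions $\bar\psi_k$ for the linear problem \eqref{Linear} are generated by exactly the same recursion, the only difference being that the coefficients $\mathcal Q^{\alpha\beta}$ and $\mathcal S$ are frozen along $v$ rather than along the unknown. I would prove $\bar\psi_k=\psi_k(J_k u_0,J_{k-1}u_1)$ by induction on $k\le s+1$: the formula for $\bar\psi_k$ depends only on $\partial_0^j u(0,\cdot)=\bar\psi_j$ for $j\le k-1$ and on the Taylor coefficients $\partial_0^l v(0,\cdot)=\psi_l$ for $l\le k-1$ (both available, the latter by the hypothesis on $v$), and it is the very polynomial in these data that defines $\psi_k$ in the nonlinear recursion, because $v(0,\cdot)$ and $\partial_0 v(0,\cdot)$ match the Cauchy data from which the nonlinear coefficients are built. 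Hence the two recursions coincide term by term.

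The step I expect to require the most care --- and the only genuine departure from \cite{KSS2}, which treats Dirichlet boundary conditions --- is verifying that Definition \ref{Definition} is consistent at every order, i.e.\ that the recursion never forces incompatible conditions on $\partial\mo$. For the nonlinear problem the trace $\partial_{\boldsymbol\nu}u=0$ is not preserved by $\partial_0^k$ in general; when one commutes $\partial_0^j$ past the quasilinear term $\mathcal Q^{\alpha\beta}(\partial u)\partial^2_{\alpha\beta}u$ the obstruction is precisely a boundary expression of the type $\mathcal Q^{\alpha\beta}(\partial v)\boldsymbol\nu^\alpha\partial_\beta(\cdot)$, which vanishes by \eqref{AC} (cf.\ Remark \ref{AdC} and Sect.~\ref{III}). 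Granting this, the recursion closes and the remaining index bookkeeping and Moser estimates are identical to those in Sect.~9 of \cite{KSS2}, which I would cite for the details.
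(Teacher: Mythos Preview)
The paper does not supply its own proof of this lemma; it is stated as a citation of Lemmas~9.1 and~9.3 in \cite{KSS2}, so there is nothing to compare your argument against beyond that reference. Your outline is correct and is precisely the \cite{KSS2} argument: Part~(A) by the recursion $\partial_0^k u(0,\cdot)=\psi_k$ together with Moser-type product estimates in $H^{s+1-k}(\mo)$, and Part~(B) by induction on $k$, using that the Taylor data $\partial_0^l v(0,\cdot)=\psi_l$ force the linearised recursion to reproduce the nonlinear one term by term.

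One minor comment: your last paragraph, on the admissible condition \eqref{AC} as the ``only genuine departure from \cite{KSS2}'', is not actually needed for this lemma. Parts~(A) and~(B) are purely algebraic and regularity statements about the functions $\psi_k$, and neither the Neumann trace nor \eqref{AC} plays any role in their proofs; the hypothesis $\partial_{\boldsymbol\nu}v=0$ in (B) is present only so that $v$ can serve as a legitimate iterate in \eqref{iterscheme} later. The admissible condition is what makes the \emph{energy estimates} close (Lemma~\ref{Timenl}, Lemma~\ref{Basice}), not the compatibility bookkeeping here.
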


\subsection{Local existence of smooth solutions}

\begin{thm}\label{Lexistence} For problem \eqref{Jpro} with the initial data $(u_0, u_1)$
satisfying \eqref{JInitial}, if both the admissible condition \eqref{AC} and the  compatibility  conditions of order $s+1$ are satisfied,
then there exists a constant $\varepsilon_0>0$ and a constant $T^*>0$
depending on $s$ and $\varepsilon_0$ such that when $\varepsilon<\varepsilon_0$,
there exists a solution $u$ to problem \eqref{Jpro} on $[0, T^*]\times \mo$, which satisfies
\begin{equation}\label{lexistence}
\sup\limits_{0\leq t\leq T*}\sum\limits_{0\leq j\leq s+1}\|\partial_0^j u(t, \cdot)\|_{H^{s+1-j}}\lesssim \varepsilon \left(\|u_0\|_{H^{s+1}}+\|u_1\|_{H^{s}}\right).
\end{equation}

\end{thm}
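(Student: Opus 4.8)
The statement is a standard local existence theorem for a quasilinear Neumann-wave system, so the plan is to use a Picard-type iteration scheme adapted to the boundary condition, with the compatibility conditions guaranteeing that each iterate can be started smoothly. I would set up the iteration $u^{(0)}\equiv$ (a fixed extension of the data compatible to order $s+1$, e.g.\ the Taylor polynomial in $t$ built from the $\psi_k$'s of Lemma \ref{Compatible}), and then define $u^{(m+1)}$ as the solution of the \emph{linear} Neumann problem \eqref{Linear} with $v=u^{(m)}$, i.e.
\begin{equation*}
\Box u^{(m+1)}=\mathcal{Q}^{\alpha\beta}(\partial_0 u^{(m)}, J_1 u^{(m)})\partial_{\alpha\beta}^2 u^{(m+1)}+\mathcal{S}(\partial_0 u^{(m)}, J_1 u^{(m)}),
\end{equation*}
with $\partial_{\boldsymbol\nu}u^{(m+1)}=0$ on $\mathbb{R}_+\times\partial\mo$ and $(u^{(m+1)},\partial_t u^{(m+1)})(0,\cdot)=\varepsilon(u_0,u_1)$. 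Part (B) of Lemma \ref{Compatible} is exactly what makes this legitimate: it ensures that the compatibility data for the linear problem at step $m+1$ coincide with the $\psi_k(J_k u_0,J_{k-1}u_1)$, so the linear solvability theory for the mixed problem produces $u^{(m+1)}$ with $\partial_0^j u^{(m+1)}\in C([0,T);H^{s+1-j}(\mo))$ and the same trace of $\psi_k$'s. Here one uses that under \eqref{assume} the principal part of the linear operator is a small perturbation of $\Box$, hence uniformly hyperbolic, and that the ``admissible condition'' \eqref{AC} makes the Neumann-type boundary condition propagate to all the time-differentiated problems (as emphasized in Remark \ref{AdC}).

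**Uniform bounds.** The core of the argument is to show that the iterates stay in a fixed ball. Define
\begin{equation*}
\mathcal{M}_s(v;t)=\sum_{0\leq j\leq s+1}\|\partial_0^j v(t,\cdot)\|_{H^{s+1-j}(\mo)},
\end{equation*}
and let $K=C_0\varepsilon(\|u_0\|_{H^{s+1}}+\|u_1\|_{H^s})$ be the bound one wants to reach, where $C_0$ is the constant from part (A) of Lemma \ref{Compatible}. Arguing by induction on $m$, I would assume $\sup_{0\leq t\leq T}\mathcal{M}_s(u^{(m)};t)\leq 2K$ and then run the standard high-order energy estimate for the linear Neumann problem satisfied by $u^{(m+1)}$: differentiating the equation by $\partial_0^j$ and the tangential vector fields, integrating against $\partial_0$ of the corresponding quantity, and using the Neumann condition plus \eqref{AC} to kill the boundary terms (this is precisely the mechanism in Lemma \ref{Timenl}, Lemma \ref{Conclusion} and the elliptic regularity Lemma \ref{Elliptic}, which recover full spatial $H^{s+1}$ regularity from time derivatives plus $\Box u$). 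Moser-type product/composition estimates, valid since $s>6+3n/2$ gives $H^s\hookrightarrow C^2$ with room to spare, control the nonlinear right-hand side by a polynomial in $\mathcal{M}_s(u^{(m)};t)$. Gronwall then yields $\mathcal{M}_s(u^{(m+1)};t)\leq K e^{C(K)t}$, which is $\leq 2K$ on $[0,T^*]$ provided $T^*=T^*(s,\varepsilon_0)$ is chosen small; choosing $\varepsilon_0$ small keeps $C(K)$ and hence $T^*$ uniform.

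**Convergence and conclusion.** With the uniform bound in hand, I would estimate the difference $z^{(m)}=u^{(m+1)}-u^{(m)}$, which solves a linear Neumann-wave problem with zero data and right-hand side $\Box z^{(m)}-\mathcal{Q}^{\alpha\beta}(\partial_0 u^{(m)},J_1 u^{(m)})\partial^2_{\alpha\beta}z^{(m)}$ equal to terms that are linear in $(\partial_0 z^{(m-1)}, J_1 z^{(m-1)})$ with coefficients bounded in terms of the $2K$-ball. A low-norm energy estimate (say in $\bigcap_{j\leq 1}C^j([0,T^*];H^{1-j})$, again with boundary terms killed by the Neumann condition and \eqref{AC}) gives $\sup_{[0,T^*]}\|z^{(m)}\|_{\text{low}}\leq \tfrac12\sup_{[0,T^*]}\|z^{(m-1)}\|_{\text{low}}$ after possibly shrinking $T^*$, so $\{u^{(m)}\}$ is Cauchy in the low norm; interpolating with the uniform high-norm bound gives convergence in every $H^{s+1-j}$ with $j\geq 1$ of the time derivatives, and the limit $u$ solves \eqref{Jpro} on $[0,T^*]\times\mo$, is smooth by the compatibility-to-all-orders hypothesis of Theorem \ref{them1} together with bootstrapping the regularity, and satisfies \eqref{lexistence} from the uniform bound. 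The main obstacle is purely the boundary bookkeeping: one must check carefully that each time-differentiated iterate genuinely satisfies a homogeneous Neumann-type boundary condition so that no uncontrolled boundary integral appears in the energy identities — this is where the ``admissible condition'' \eqref{AC} (respectively \eqref{AC1} for the linearized coefficients) is indispensable, and it is the only place where the structure of $\mathcal{N}$ beyond quasilinearity is used in this section.
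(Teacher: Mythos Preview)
Your proposal is correct and follows essentially the same route as the paper: Picard iteration \eqref{iterscheme} started from an initial iterate built to match the $\psi_k$'s, with Lemma~\ref{Compatible}(B) ensuring the compatibility conditions propagate through the iteration, uniform high-norm bounds via the energy estimate of Lemma~\ref{Basice} plus Gronwall, and convergence via a low-norm contraction on $u_{l+1}-u_l$. The one place where the paper invests more than you indicate is the linear solvability step: since the coefficients $\mathcal{Q}^{\alpha\beta}(\partial u^{(m)})$ lie only in $C^j([0,T];H^{s-j})$ rather than $C^k$, the paper proves a separate Lemma~\ref{Soblocal} (bootstrapping from Theorem~\ref{ConLocal} via the reformulation \eqref{conlocal5}--\eqref{conlocal6} for $\partial_0^{s+1-k}u$ with a carefully chosen intermediate $k$) to justify existence of each iterate at the full regularity $\partial_0^j u^{(m+1)}\in C([0,T];H^{s+1-j})$; your phrase ``the linear solvability theory for the mixed problem'' hides this step.
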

To prove Theorem \ref{Lexistence}, based on the existence result of linear Neumann-wave problem (see \cite{IM} and \cite{YN}),
we shall establish some estimates for the linear problem \eqref{Linear}. For  $T>0$, define
\begin{equation*}
M_{s+1}(t,v)=\sum\limits_{|\alpha|\leq s+1}\|\partial^{\alpha}v(t, \cdot)\|,\ \ M_{s+1}[v]=\sup\limits_{0\leq t\leq T}M_{s+1}(v, t)
\end{equation*}
and
\begin{equation*}
\|q_v(t)\|=\sum\limits_{\alpha, \beta=0}^{n}\|\mathcal{Q}^{\alpha\beta}(\partial_0 v, J_1 v)(t, \cdot)\|_{\infty},\ \ \|\partial q_v(t)\|=\sum\limits_{\alpha, \beta=0}^{n}\|\partial \mathcal{Q}^{\alpha\beta}(\partial_0 v, J_1 v)(t, \cdot)\|_{\infty}.
\end{equation*}

\begin{lem}\label{Basice} Under the assumptions of Lemma \ref{Compatible}
and  the admissible condition \eqref{AC}, let $u$ be a solution of problem \eqref{Linear} with
\begin{equation*}
\partial_0^j u\in C([0, T); H^{s+1-j}(\mo)).
\end{equation*}
In addition, we suppose for some $M_0>0$
\begin{equation*}
\ds\sup_{0\le t\le T}\|q_v(t)\|\leq M_0\varepsilon.
\end{equation*}
Then there exists a constant $C>0$ independent of $u$ and $v$, such that for $0\leq t\leq T$,
\begin{equation}\label{GJ1}\begin{aligned}
&M_{s+1}(t,u)\\[2mm]
\leq& Ce^{CM_{s+1}[v]t}\biggl(M_{s+1}(0,u)+C(M_{s+1}[v])M_{s+1}[v]\int_0^{t}M_{s+1}(\sigma, u)d\sigma\\[2mm]
&+C(M_{s}[v])\int_0^{t}M_{s}^2(l, v)dl\biggr)+C(M_s(t, v))M_s(t,v)(M_{s+1}(t,u)+M_s(t,v)).
\end{aligned}
\end{equation}

\end{lem}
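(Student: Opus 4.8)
The plan is to treat \eqref{Linear} as a variable-coefficient Neumann-wave equation $\Box_h u=F$, with $h^{\alpha\beta}=-\mathcal{Q}^{\alpha\beta}(\partial_0 v,J_1 v)$ and $F=\mathcal{S}(\partial_0 v,J_1 v)$, to run the time-differentiated $L^2$ energy estimate of Lemma \ref{Timenl} and thereby control $\mathcal{E}_{0,s}^{1/2}(t)\approx\sum_{j\le s}\|\partial\partial_0^j u(t,\cdot)\|$, and finally to upgrade this to the full norm $M_{s+1}(t,u)$ via the elliptic regularity estimates of Lemma \ref{Conclusion} and Lemma \ref{Elliptic}. First I would verify the hypotheses of those lemmas. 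Since $\partial_{\boldsymbol{\nu}}v=0$ and $\partial_{\boldsymbol{\nu}}u=0$ on $\mathbb{R}_+\times\partial\mo$, the admissible condition \eqref{AC} forces $h^{\alpha\beta}\boldsymbol{\nu}^\alpha\partial_\beta u=0$ on the boundary, which is precisely \eqref{AC1}; moreover $\sup_t\|q_v(t)\|\le M_0\varepsilon$ together with \eqref{assume} makes $\|h(t)\|$ as small as one wishes, so \eqref{elcompare} holds and, as in the proof of Lemma \ref{Timenl}, the boundary integral $\int_{\partial\mo}e_i(\partial_0^j u)\boldsymbol{\nu}^i\,d\sigma$ vanishes. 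The regularity $\partial_0^j u\in C([0,T);H^{s+1-j}(\mo))$ assumed in the statement legitimizes all the manipulations below.

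Next I would differentiate \eqref{Linear} by $\partial_0^j$, $0\le j\le s$, which preserves the Neumann condition because $\partial\mo$ is time-independent, obtaining $\Box_h(\partial_0^j u)=\partial_0^j F+[\partial_0^j,h^{\beta\gamma}\partial_{\beta\gamma}^2]u$, and apply Lemma \ref{Timenl} with $\mathcal L=\mathrm{id}$ to get
\[
\partial_0\mathcal{E}_{0,s}^{1/2}(t)\lesssim\sum_{j\le s}\big(\|\partial_0^j F(t,\cdot)\|+\|[\partial_0^j,h^{\beta\gamma}\partial_{\beta\gamma}^2]u(t,\cdot)\|\big)+\|\partial h(t)\|\,\mathcal{E}_{0,s}^{1/2}(t).
\]
The source and commutator terms are then bounded by Moser-type product inequalities in the Sobolev spaces on $\mo$, where the hypothesis $s>6+3n/2$ ensures that every factor put into $L^\infty$ is controlled through Sobolev embedding by $M_s$ or $M_{s+1}$. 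Since $h$ and $F$ depend on $v$ only through $(\partial_0 v,J_1 v)$, every commutator term carries at least one derivative falling on $h$, hence a factor $\lesssim C(M_{s+1}[v])M_{s+1}[v]$, so that $\sum_{j\le s}\|[\partial_0^j,h^{\beta\gamma}\partial_{\beta\gamma}^2]u(t,\cdot)\|\lesssim C(M_{s+1}[v])M_{s+1}[v]\,M_{s+1}(t,u)$; since $F=\mathcal{S}(\partial u)$ is at least quadratic, $\sum_{j\le s}\|\partial_0^j F(t,\cdot)\|\lesssim C(M_s[v])M_s^2(t,v)$; and $\|\partial h(t)\|\lesssim C(M_{s+1}[v])M_{s+1}[v]$. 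Inserting these into the last display and applying Gronwall's inequality, together with the trivial bound $\|u(t,\cdot)\|\le\|u(0,\cdot)\|+\int_0^t\|\partial_0 u(s,\cdot)\|\,ds$, yields the part of \eqref{GJ1} carrying the factor $e^{CM_{s+1}[v]t}$.

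Finally I would pass from control of $\mathcal{E}_{0,s}^{1/2}(t)$ to control of all $\|\partial^\alpha u(t,\cdot)\|$ with $|\alpha|\le s+1$. Writing $\Delta(\partial_0^j u)=\partial_0^{j+2}u-h^{\beta\gamma}\partial_{\beta\gamma}^2(\partial_0^j u)-\partial_0^j F$ and using that $\partial_0^j u$ still satisfies $\partial_{\boldsymbol{\nu}}\partial_0^j u=0$, the interior estimate \eqref{interior} and the near-boundary estimate \eqref{boundary}, applied iteratively in the number of spatial derivatives exactly as in Lemma \ref{Conclusion}, give
\[
M_{s+1}(t,u)\lesssim\|u(t,\cdot)\|+\sum_{j\le s}\|\partial\partial_0^j u(t,\cdot)\|+\sum_{|\alpha|\le s-1}\|\partial^\alpha\big(h^{\beta\gamma}\partial_{\beta\gamma}^2 u+F\big)(t,\cdot)\|.
\]
In the last sum the term with all derivatives on $\partial_{\beta\gamma}^2 u$ carries the small coefficient $\|q_v(t)\|\le M_0\varepsilon$ and is absorbed into $M_{s+1}(t,u)$ on the left; the remaining summands of the $h$-contribution are $\lesssim\|h(t)\|_{H^{s-1}}\|\partial^2 u(t,\cdot)\|_{L^\infty}\lesssim C(M_s(t,v))M_s(t,v)\,M_{s+1}(t,u)$; and $\|\partial^\alpha F(t,\cdot)\|\lesssim C(M_s(t,v))M_s^2(t,v)$. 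Together these produce precisely the last term $C(M_s(t,v))M_s(t,v)(M_{s+1}(t,u)+M_s(t,v))$ of \eqref{GJ1}, completing the plan. I expect the main obstacle to lie in the bookkeeping of this elliptic upgrade and of the Moser estimates: one must split each product so that the small factor (a power of $\|q_v\|\lesssim\varepsilon$, or a factor already absorbed by the energy) multiplies the highest-order factor, and so that the non-integrated remainder genuinely involves only $M_{s+1}(t,u)$ and $M_s(t,v)$ — which is exactly what makes \eqref{GJ1} usable in the iteration scheme of Theorem \ref{Lexistence}.
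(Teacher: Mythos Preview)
Your proposal is correct and follows essentially the same route as the paper: verify the boundary condition \eqref{AC1} via the admissible condition, run the time-differentiated energy estimate of Lemma \ref{Timenl} on $\partial_0^j u$, upgrade to full space-time derivatives by the elliptic regularity of Lemma \ref{Elliptic}/Lemma \ref{Conclusion}, absorb the top-order quasilinear term using the smallness of $\|q_v\|$, and close with Gronwall plus $\|u(t,\cdot)\|\le\|u(0,\cdot)\|+\int_0^t\|\partial_0 u\|$. The paper merely interleaves the elliptic step and the energy step in a single display \eqref{Basice1} rather than presenting them sequentially, but the ingredients and the resulting terms in \eqref{GJ1} match yours.
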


\begin{proof} By \eqref{AC} and Lemma \ref{Compatible} (B), one has
that for any function $w$ satisfying $\partial_{\boldsymbol{\nu}}w=0$ on $\mathbb{R}_{+}\times\partial\mo$,
\begin{equation}\label{JAC}
\mathcal{Q}^{\alpha\beta}(\partial_0 v, J_1 v)\boldsymbol{\nu}^{\alpha}\partial_{\beta}w=0,\ \ (t, x)\in\mathbb{R}_{+}\times\partial\mo.
\end{equation}
Based on \eqref{JAC}, by Lemma \ref{Timenl} and Lemma \ref{Conclusion}, we have
\begin{equation}\label{Basice1}\begin{aligned}
&\sum\limits_{0\leq |\alpha|\leq s}\|\partial\partial^{\alpha}u(t, \cdot)\|\\[2mm]
\lesssim&\sum\limits_{0\leq k\leq s-1}\|\nabla^2\partial_0^{k}u(t, \cdot)\|_{H^{s-1-k}}+\sum\limits_{0\leq k\leq s}\|\partial\partial_0^{k}u(t, \cdot)\|\\[2mm]
\lesssim&\sum\limits_{0\leq k\leq s-1}\left(\|\mathcal{Q}^{\alpha\beta}(\partial_0 v, J_1 v)\partial_{\alpha\beta}^2\partial_0^{k}u(t, \cdot)\|_{H^{s-1-k}}+\|F_k(t, \cdot)\|_{H^{s-1-k}}\right)\\[2mm]
&+\sum\limits_{0\leq k\leq s} e^{CM\varepsilon t}\left(\|\partial\partial_0^{k}u(0, \cdot)\|+\int_0^{t}\|F_k(s, \cdot)\|ds\right),
\end{aligned}
\end{equation}
where $F_k=\sum\limits_{l_1+l_2=k;\ l_1\geq 1}\partial_0^{l_1}\mathcal{Q}^{\alpha\beta}(\partial_0 v, J_1 v)\partial_{\alpha\beta}^2\partial_0^{l_2}u+\partial_0^{k}\mathcal{S}(\partial_0 v, J_1 v).
$ In addition,
\begin{equation*}
\|u(t, \cdot)\|\lesssim \int_0^t\|\partial_0 u(l, \cdot)\|dl.
\end{equation*}
Combining this with \eqref{Basice1} yields \eqref{GJ1}.\qquad
\qquad \qquad \qquad \qquad \qquad \qquad \qquad \qquad \qquad \qquad  $\square$\end{proof}

We now derive the existence of solution $u$ to the following linear Neumann-wave problem
\begin{equation}\label{conlocal1}\begin{cases}
\Box u=h^{\alpha\beta}\partial_{\alpha\beta}^2 u+G(t, x),\ \ (t, x)\in\mathbb{R}_{+}\times\mo,\\[2mm]
\partial_{\boldsymbol{\nu}}u=0,\qquad\qquad  \ (t, x)\in\mathbb{R}_{+}\times\partial\mo,\\[2mm]
(u, \partial_t u)(0, x)=\varepsilon (u_0, u_1)(x),\ \ x\in\mo.
\end{cases}
\end{equation}

\begin{thm}\label{ConLocal}  For problem \eqref{conlocal1} with $\|h(t, \cdot)\|\leq 1/2$ and
$h^{\alpha\beta}$ satisfying the ``admissible condition'' \eqref{AC}, assume that for $k\geq 2+n/2$,
\begin{equation*}
h^{\alpha\beta}(t, x)\in C^k([0, T]\times\mo),\  \ (u_0, u_1)\in H^{k}(\mo)\times H^{k-1}(\mo)
\end{equation*} and
\begin{equation}\label{5.5}
\partial_0^{j} G\in C([0, T]; H^{k-2-j}(\mo)),\ 0\leq j\leq k-2;\ \ \partial_0^{k-1}G\in L^1([0, T]; L^2(\mo)),
\end{equation}
when the corresponding compatible conditions of order $k$ are satisfied,
then there exists a unique solution $u$ to problem \eqref{conlocal1} with $\partial_0^j u\in C([0, T]; H^{k-j}(\mo))$ for $0\leq j\leq k$. Moreover,
\begin{equation}\label{conlocal2}
M_{k}(t,u)\leq C_k\left(\varepsilon\|u_0\|_{H^k}+\varepsilon\|u_1\|_{H^{k-1}}+\sup\limits_{0\leq s\leq t\atop |\alpha|\leq k-2}\|\partial^{\alpha}G(s, \cdot)\|
+\sum\limits_{0\leq j\leq\kappa-1}\int_0^{t}\|\partial_0^{j}G(s, \cdot)\|ds\right),
\end{equation}
where $C_k>0$ depends on the $C^k$ norm of $h^{\alpha\beta}$.

\end{thm}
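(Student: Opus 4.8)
The plan is to prove Theorem \ref{ConLocal} by combining the basic solvability of the linear Neumann-wave equation from \cite{IM} and \cite{YN} with the energy estimate of Lemma \ref{Timenl}, the Neumann elliptic regularity of Lemma \ref{Elliptic} and the time-derivative bootstrap of Lemma \ref{Conclusion}, in the spirit of the iteration scheme of Sect.~9 of \cite{KSS2}. The structural fact used everywhere is that the ``admissible condition'' \eqref{AC} (equivalently \eqref{AC1} for $h^{\alpha\beta}$) makes the boundary integral $\int_{\partial\mo}e_i(\cdot)\boldsymbol{\nu}^i\,d\sigma$ in the energy identity vanish, so the whole machinery of Sect.~\ref{III} applies to \eqref{conlocal1}; the task is then to organize it into a closed existence and regularity statement at level $k$.

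\emph{A priori estimate.} First I would establish \eqref{conlocal2} for a solution $u$ with $\partial_0^j u\in C([0,T];H^{k-j}(\mo))$. Differentiating the equation in $t$, the function $\partial_0^j u$ solves a Neumann-wave problem with operator $\Box+(-h^{\alpha\beta})\partial_{\alpha\beta}^2$, source $\partial_0^j G-[\partial_0^j,h^{\alpha\beta}\partial_{\alpha\beta}^2]u$, and homogeneous Neumann data $\partial_{\boldsymbol{\nu}}\partial_0^j u=0$ (as $\partial\mo$ is $t$-independent), and the admissible condition is inherited by $-h^{\alpha\beta}$. Since $\|h(t)\|\le 1/2$ still forces $\frac{1}{2}|\partial u|^2\le e_0(u)\le 2|\partial u|^2$, the proof of Lemma \ref{Timenl} applies verbatim with $h$ replaced by $-h$; integrating in $t$ and absorbing $\|\partial h(t)\|\lesssim 1$ by Gronwall's inequality gives, for $0\le j\le k-1$,
\[
\|\partial\partial_0^j u(t,\cdot)\|\lesssim \varepsilon\big(\|u_0\|_{H^{j+1}}+\|u_1\|_{H^{j}}\big)+\int_0^t\Big(\|\partial_0^j G(s,\cdot)\|+\big\|[\partial_0^j,h^{\alpha\beta}\partial_{\alpha\beta}^2]u(s,\cdot)\big\|\Big)\,ds .
\]
The commutator consists only of terms $\partial_0^l h^{\alpha\beta}\,\partial_{\alpha\beta}^2\partial_0^{j-l}u$ with $l\ge1$, each bounded by the $C^k$-norm of $h$ times $\sum_{i\le j-1}\|\partial^2\partial_0^i u\|$, which Lemma \ref{Elliptic} and \eqref{see2} express in terms of the strictly lower-order quantities $\sum_{i\le j}\|\partial\partial_0^i u\|$, $\sum_{i\le j-1}\|\partial_0^i G\|_{H^{j-1-i}}$ and (via the equation, using $\|h\|\le1/2$ to solve for $\partial_0^{i+2}u$) $\|\partial_0^j G\|$. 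Running the induction on $j$ from $0$ to $k-1$, feeding the result back into the Gronwall step, and handling the undifferentiated term by $\|u(t,\cdot)\|\le\varepsilon\|u_0\|+\int_0^t\|\partial_0 u(s,\cdot)\|\,ds$, one bounds $\sum_{j\le k}\|\partial\partial_0^j u(t,\cdot)\|$ by the right-hand side of \eqref{conlocal2}; a final application of Lemma \ref{Elliptic} together with Lemma \ref{Conclusion} converts the time derivatives into all spatial derivatives of order $\le k$, which is \eqref{conlocal2}.

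\emph{Existence, uniqueness, continuity.} For existence I would approximate $h^{\alpha\beta},G,u_0,u_1$ by smooth data $h^{\alpha\beta}_\rho,G_\rho,u_0^\rho,u_1^\rho$, modified near $\partial\mo$ through the compatibility functions $\psi_j$ of \eqref{psi} (as in Sect.~9 of \cite{KSS2}) so that $\|h_\rho(t)\|\le1/2$, the admissible condition is preserved, and the compatibility conditions of order $k$ still hold; the solvability theory of \cite{IM}, \cite{YN} then yields a smooth solution $u_\rho$ of the associated problem. The a priori estimate above gives a bound on $\sum_{j\le k}\|\partial_0^j u_\rho(t,\cdot)\|_{H^{k-j}}$ that is uniform in $\rho$ on $[0,T]$, so along a subsequence $u_\rho$ converges weakly-$*$ to a function $u$ with $\partial_0^j u\in L^\infty([0,T];H^{k-j}(\mo))$ solving \eqref{conlocal1} and satisfying \eqref{conlocal2}; the equation together with \eqref{conlocal2} then upgrades this to $\partial_0^j u\in C([0,T];H^{k-j}(\mo))$ by the standard weak-continuity-plus-energy-continuity argument. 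Uniqueness is immediate: the difference of two solutions has zero Cauchy data and zero source, so Lemma \ref{Timenl} with $\mu=\nu=0$ and $h$ replaced by $-h$ forces $\mathcal{E}_{0,0}\equiv0$. One may alternatively reduce to vanishing Cauchy data at the outset by subtracting $\sum_{j=0}^{k-1}\frac{t^j}{j!}\psi_j(x)$, which has vanishing Neumann trace by the order-$k$ compatibility conditions and turns the source into one vanishing to order $k-2$ in $t$ and lying in the class \eqref{5.5}.

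\emph{Main obstacle.} The delicate part is the regularity bootstrap near $\partial\mo$: since only the normal derivative of $u$ is prescribed on the boundary, one cannot commute the equation with arbitrary derivatives and must restrict to $\partial_0$-commutators, recovering $\nabla^2$-regularity from $\Delta u$ only through the Neumann elliptic estimate \eqref{boundary}; making the resulting Gronwall iteration close with constants depending solely on the $C^k$-norm of $h$ (as asserted in \eqref{conlocal2}), and arranging the mollification to respect simultaneously $\|h_\rho(t)\|\le1/2$ and the order-$k$ compatibility conditions, are the points that require care. Given Lemma \ref{Timenl}--Lemma \ref{Conclusion}, the remaining steps are routine.
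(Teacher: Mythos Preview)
Your proposal is correct and follows essentially the same route as the paper: invoke Ikawa's linear theory \cite{IM} for existence, derive the a priori estimate \eqref{conlocal2} from the time-derivative energy (Lemma \ref{Timenl}) combined with the Neumann elliptic regularity bootstrap (Lemmas \ref{Elliptic} and \ref{Conclusion}), and pass from smoother data to the stated regularity \eqref{5.5} by approximation. The only minor economy you miss is that the paper approximates \emph{only} $G$: since $h^{\alpha\beta}$ is already $C^k$ and $(u_0,u_1)\in H^k\times H^{k-1}$, Ikawa's theorems apply directly once $\partial_0^j G\in C([0,T];H^{k-1-j})$ for $0\le j\le k-1$, so there is no need to mollify the coefficients or the Cauchy data (and hence no need to worry about preserving $\|h_\rho\|\le 1/2$ or the admissible condition under mollification).
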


\begin{proof} We just only give the sketch of the proof. When $\partial_0^j G\in C([0, T]; H^{k-1-j}(\mo))$ for $0\leq j\leq k-1$, then Theorem \ref{ConLocal} directly comes from Theorem 1 and Theorem 2 of \cite{IM}. In addition, estimate \eqref{conlocal2} is resulted from
the proof of Theorem 2 of \cite{IM} (one can also see Lemma \ref{Timenl} and Lemma \ref{Conclusion}).
When $G$ satisfies \eqref{5.5}, it follows from \eqref{conlocal2} and an approximate argument that Theorem \ref{ConLocal} holds.
\qquad \qquad \qquad  \qquad  \qquad  \qquad  \qquad  \qquad \qquad \qquad \qquad \qquad \qquad \qquad \qquad  $\square$\end{proof}

Next we extend Theorem \ref{ConLocal} so that it can be used to prove the local existence
of problem \eqref{Mpro}.

\begin{lem}\label{Soblocal} For problem \eqref{conlocal1} with $\|h(t, \cdot)\|\leq 1/2$
and $h^{\alpha\beta}$ satisfying the ``admissible condition'' \eqref{AC}, if $s>6+3n/2$,
\begin{equation*}
h^{\alpha\beta}\in C^{j}([0, T]; H^{s-j}(\mo)),\ \ 0\leq j\leq s,
\end{equation*}
\begin{equation*}
\partial_0^{j}G\in C([0, T]; H^{s-1-j}(\mo)),\ 0\leq j\leq s-1;\ \ \partial_0^s G\in L^1([0, T]; L^2(\mo)),
\end{equation*}
$(u_0, u_1)\in H^{s+1}(\mo)\times H^{s}(\mo)$, and the compatible conditions of order $s+1$ are satisfied, then problem \eqref{conlocal1} has a unique solution $u$ such that $\partial_0^j u\in C([0, T]; H^{s+1-j}(\mo))$ for $0\leq j\leq s+1$. Moreover,
\begin{equation}\label{conlocal3}
M_{s+1}(t,u)\leq C\left(\varepsilon\|(u_0, u_1)\|_{H^{s+1}\times H^s}+\sup\limits_{0\leq \tau\leq t\atop|\alpha|\leq s-1}\|\partial^{\alpha}G(\tau, \cdot)\|+\sum\limits_{0\leq j\leq s}\int_0^{t}\|\partial_0^{j}G(\tau, \cdot)\|d\tau\right),
\end{equation}
where $C>0$ depends on the $C^{j}([0, T]; H^{s-j}(\mo))$ norms of $h^{\alpha\beta}$ for $0\leq j\leq s$.
\end{lem}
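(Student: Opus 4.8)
The plan is to bootstrap from Theorem \ref{ConLocal}, which already gives existence, uniqueness and the estimate \eqref{conlocal2} at the regularity level $k$, up to the ``one-derivative-higher'' level $s+1$ with $G$ only enjoying the weaker mixed-norm bound $\partial_0^s G\in L^1([0,T];L^2)$. First I would reduce matters to the case $\partial_0^j G\in C([0,T];H^{s-j})$ for $0\le j\le s$ by the same approximation device used at the end of the proof of Theorem \ref{ConLocal}: mollify $G$ in time to obtain $G_\varepsilon$ which is smooth enough, solve the resulting problem, and pass to the limit using the uniform estimate \eqref{conlocal3} (which controls $\partial_0^s G$ only in $L^1_tL^2_x$, exactly the norm that survives mollification). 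So the real work is to prove \eqref{conlocal3} a priori for sufficiently regular data.

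The key steps for the a priori estimate are as follows. I would differentiate the equation in \eqref{conlocal1} in time $j$ times for $0\le j\le s$; since $\partial_{\boldsymbol\nu}\partial_0^j u=0$ on $\mathbb{R}_+\times\partial\mo$ and $h^{\alpha\beta}$ obeys the admissible condition \eqref{AC}, Lemma \ref{Timenl} (with $\mu=\nu=0$ applied to each $\partial_0^j u$, treating the commutator $[\partial_0^j,h^{\alpha\beta}\partial^2_{\alpha\beta}]u$ and $\partial_0^j G$ as forcing) yields
\begin{equation*}
\partial_0\Big(\sum_{j\le s}\|\partial\partial_0^j u(t,\cdot)\|\Big)\lesssim \sum_{j\le s}\|\partial_0^j G(t,\cdot)\|+\Big(\|\partial h(t)\|+\sum_{j\le s}\|[\partial_0^j,h^{\alpha\beta}\partial^2_{\alpha\beta}]u(t,\cdot)\|\Big).
\end{equation*}
The commutator terms are estimated by Moser-type product estimates in $H^\sigma$ using $h^{\alpha\beta}\in\bigcap_{j\le s}C^j([0,T];H^{s-j})$ and $s>6+3n/2$ (so the relevant Sobolev spaces are algebras and the low-order factors embed in $L^\infty$); they are bounded by $C\,\sum_{j\le s}\|\partial\partial_0^j u\|$ with $C$ depending on the $C^j H^{s-j}$ norms of $h$. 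Gronwall then controls $\sum_{j\le s}\|\partial\partial_0^j u(t,\cdot)\|$ by the right-hand side of \eqref{conlocal3} (the $\partial_0^s G$ term entering precisely as $\int_0^t\|\partial_0^s G\|ds$). Next, to recover full spatial derivatives I would invoke the elliptic regularity estimate of Lemma \ref{Elliptic} (equivalently the argument behind \eqref{see2} in Lemma \ref{Conclusion}), writing $\Delta\partial_0^j u=\partial_0^{j+2}u-h^{\alpha\beta}\partial^2_{\alpha\beta}\partial_0^j u-[\partial_0^j,h^{\alpha\beta}\partial^2_{\alpha\beta}]u-\partial_0^j G$ and inducting downward on $j$ from $s-1$ to $0$, using $\|h(t)\|\le 1/2$ to absorb the $h^{\alpha\beta}\partial^2_{\alpha\beta}$ term on the left; this converts the time-derivative bound into the full $M_{s+1}(t,u)$ bound, at the cost of $\sup_{\tau\le t,|\alpha|\le s-1}\|\partial^\alpha G(\tau,\cdot)\|$. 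Finally $\|u(t,\cdot)\|\lesssim\int_0^t\|\partial_0 u\|$ handles the undifferentiated solution, and assembling these gives \eqref{conlocal3}.

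The main obstacle is the commutator/Moser bookkeeping: one must verify that every term in $[\partial_0^j,h^{\alpha\beta}\partial^2_{\alpha\beta}]u$ for $0\le j\le s$ — which involves up to $s$ time-derivatives distributed between $h^{\alpha\beta}$ and $\partial^2 u$ — can be estimated in the right $L^2$-based norm by the product of a norm of $h$ (of the type $C^jH^{s-j}$, as assumed) and $\sum_{j\le s}\|\partial\partial_0^j u\|$ plus lower-order terms, with the ``loss'' of one spatial derivative on $\partial^2 u$ recovered only afterwards via Lemma \ref{Elliptic}. This is where the hypothesis $s>6+3n/2$ is used, to guarantee all the Sobolev embeddings and algebra properties needed; once this is in place the Gronwall argument, the downward elliptic induction, and the final mollification limit are routine.
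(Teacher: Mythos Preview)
Your a priori estimate argument---time-differentiation, energy via Lemma \ref{Timenl} (using \eqref{AC1}), Moser estimates for the commutators, Gronwall, then downward elliptic induction via Lemma \ref{Elliptic}---is essentially the content of Lemma \ref{Basice} and is correct. The gap is in the \emph{existence} step. You propose to mollify $G$ in time and ``solve the resulting problem'' at regularity $s+1$, but Theorem \ref{ConLocal} requires $h^{\alpha\beta}\in C^{k}([0,T]\times\mo)$, and the hypothesis $h^{\alpha\beta}\in\bigcap_{j}C^{j}([0,T];H^{s-j}(\mo))$ only yields (by Sobolev embedding) $h\in C^{k}$ for $k$ with $s-k>n/2$, never $k=s+1$. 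Mollifying $h$ in $x$ to gain classical regularity would in general destroy the boundary condition \eqref{AC1} that your energy estimate relies on; mollifying $h$ only in $t$ preserves \eqref{AC1} but does not raise spatial regularity. So you cannot directly produce a solution at level $s+1$ to which your a priori bound applies, and this is not a cosmetic issue.

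The paper takes a different route for existence. One picks an intermediate integer $k$ with $k\geq 2+n/2$, $s-k>n/2$, and $s-k<k-n/2$; the choice $k=1+[s/2]+[n/4]$ works, and these three constraints together are what force $s>6+3n/2$ (so the threshold is structural, not merely a Moser-bookkeeping requirement as you suggest). By Sobolev, $h\in C^{k}$, and Theorem \ref{ConLocal} delivers a solution with $\partial_0^{j}u\in C([0,T];H^{k-j})$ for $0\leq j\leq k$. To climb to level $s+1$, one sets $w=\partial_0^{s+1-k}u$, expresses $u$ as its degree-$(s-k)$ Taylor polynomial in $t$ (with coefficients the compatibility data $\psi_j$) plus an iterated time-integral of $w$, and observes that $w$ solves a problem of the form \eqref{conlocal1} whose forcing is $\partial_0^{s+1-k}G$ plus commutator terms $\partial_0^{s+1-k-m}h^{\alpha\beta}\,\partial_0^{m}\partial_{\alpha\beta}^{2}u$, $0\leq m\leq s-k$. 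The constraint $s-k<k-n/2$ is exactly what places these commutators in $\bigcap_{j\leq k-2}C^{j}([0,T];H^{k-2-j})$, so Theorem \ref{ConLocal} applies again to give $\partial_0^{j}w\in C([0,T];H^{k-j})$, hence $\partial_0^{j}u\in C([0,T];H^{s+1-j})$ for $s+1-k\leq j\leq s+1$; the remaining range $0\leq j\leq s-k$ follows by elliptic recovery. This Taylor-in-time bootstrap (following \cite{KSS2}, Theorem 9.10) is the missing existence ingredient in your plan.
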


\begin{proof} We just only give the sketch of the proof. When $s>6+3n/2$, then there exists an
integer $k\geq 2+n/2$ to be determined later, such that
\begin{equation}\label{conlocal4}
s-k>n/2,\ \ h^{\alpha\beta}(t, x)\in C^{k}([0, T]\times\mo).
\end{equation}
With the help of Theorem \ref{ConLocal}, we know that problem \eqref{conlocal1} has a unique
solution $u$ with $\partial_0^{j} u\in C([0, T]; H^{k-j}(\mo))$ for $0\leq j\leq k$.

In addition, if $\partial_0^{j}u\in C([0, T]; H^{s+1-j}(\mo))$ for $0\leq j\leq s+1$,
similar to \eqref{conlocal2}, we can obtain estimate \eqref{conlocal3}. Then only thing left is to improve the regularity of $u$, namely for $w=\partial_0^{s+1-k}u$, we should prove $\partial_0^{j}w\in C([0, T], H^{k-j}(\mo))$ for $0\leq j\leq k$.

To improve the regularity of $u$, formally, we consider the following problem for $w=\partial_0^{s+1-k}u$:
\begin{equation}\label{conlocal5}\begin{cases}
\Box w-h^{\alpha\beta}\partial_{\alpha\beta}^2w=\sum\limits_{0\leq m\leq s-k}\left(\begin{matrix}s+1-k\\ m\end{matrix}\right)\partial_0^{s+1-k-m}h^{\alpha\beta}\partial_0^{m}\partial_{\alpha\beta}^2u+\partial_0^{s+1-k}G,\\[3mm]
\qquad\qquad\qquad\qquad (t, x)\in [0, T]\times\mo,\\[2mm]
\partial_{\boldsymbol{\nu}}w=0,\qquad \qquad  \ (t, x)\in [0, T]\times \partial\mo,\\[2mm]
w(0, x)=\psi_{s+1-k}(x),\ \partial_0 w(0, x)=\psi_{s+2-k}(x),\qquad  \ x\in\mo,
\end{cases}
\end{equation}
where $u$ has the form
\begin{equation}\label{conlocal6}
u(t, x)=\psi_0(x)+t\psi_1(x)+\cdots+\frac{t^{s-k}}{(s-k)!}\psi_{s-k}(x)+\int_0^t \frac{(t-\tau)^{s-k}}{(s-k)!}w(\tau, x)d\tau.
\end{equation}
Obviously, \eqref{conlocal5} with \eqref{conlocal6} can be regarded as a nonlinear problem on $w(t,x)$.

With the argument in Theorem 9.10 in \cite{KSS2} and Theorem \ref{ConLocal}, if $u$ given by \eqref{conlocal6} is assumed to satisfy $\partial_0^{j} u\in C([0, T]; H^{s+1-j}(\mo))$ for $0\leq j\leq s+1$ in advance and for $0\leq m\leq s-k$,
\begin{equation}\label{conlocal7}
\partial_0^{s+1-k-m}h^{\alpha\beta}\partial_0^{m}\partial_{\alpha\beta}^2 u\in \bigcap\limits_{j=0}^{k-2}C^{j}([0, T]; H^{k-2-j}(\mo)),
\end{equation}
then problem \eqref{conlocal5} with \eqref{conlocal6} has a unique solution $w$ such that $\partial_0^{j}w\in C([0, T]; H^{k-j}(\mo))$ for $0\leq j\leq k$.

By the argument in Theorem 9.10 in \cite{KSS2}, \eqref{conlocal7} is fulfilled when $s-k<k-n/2$. Combining this with \eqref{conlocal4} and the restriction on $k$ in Theorem \ref{ConLocal}, we can choose $k=1+[s/2]+[n/4]$ with $s>6+3n/2$.
In this situation, with Theorem \ref{ConLocal} and the regularity argument in the proof of
Theorem 1.2 in \cite{IM}, we complete the proof of Lemma \ref{Soblocal}.\end{proof}

{\bf Proof of Theorem \ref{Lexistence}.} If we have defined $u_0(t,x)$, then the sequence $\{u_{l+1}(t,x)\}$ for $l\ge 0$
will be determined by
\begin{equation}\label{iterscheme}\begin{cases}
\Box u_{l+1}=\mathcal{Q}^{\alpha\beta}(\partial_0 u_l, J_1 u_l)\partial_{\alpha\beta}^2u_{l+1}+\mathcal{S}(\partial_0 u_l, J_1 u_{l}),\ \ (t, x)\in\mathbb{R}_{+}\times\mo,\\[2mm]
\partial_{\boldsymbol{\nu}}u_{l+1}=0,\qquad \qquad \qquad \qquad \qquad \qquad \qquad \qquad \ (t, x)\in \mathbb{R}_+\times\partial\mo,\\[2mm]
u_{l+1}(0, x)=u_0(x), \partial u_{l+1}(0, x)=u_1(x),\qquad \quad\qquad \ x\in\mo.
\end{cases}
\end{equation}
We now give the construction of $u_0$. It follows from Lemma \ref{Compatible} (A), $\psi_k\in H^{s+1-k}$ for $0\leq k\leq s+1$.
By the standard extension theorem, we know that there is a function $\Psi_k\in H^{s+1-k}(\mathbb{R}^{n})$ such that $\Psi_k\bigl|_{\mo}=\psi_k$ and $\|\Psi_k\|_{H^{s+1-k}(\mathbb{R}^{n})}\leq 2\|\psi_k\|_{s+1-k}$.
Let $(a_{lk})_{0\leq l, k\leq s+1}$ be the inverse matrix of $(i^l (k+1)^{l})_{0\leq l, k\leq s+1}$ with $i=\sqrt{-1}$,
and we set
\begin{equation*}
\hat V(t, \xi)=\sum\limits_{l, k=0}^{s+1}\exp(i(l+1)\left<\xi\right>t)a_{lk}\hat V_k(\xi)\left<\xi\right>^{-k},
\end{equation*}
where $\hat V_l$ stands for the Fourier transform of $V_l$. Define $u_0(t, x)= V(t, x)\bigl|_{\mo}$ with
$V(t, x)$ being the inverse Fourier transformation of $\hat V(t, \xi)$. By a straight verification and
Lemma \ref{Compatible} (A), one has
\begin{equation}\label{iterscheme1}
M_{s+1}[u_0]\lesssim \varepsilon \|(u_0, u_1)\|_{H^{s+1}\times H^s}\leq C(s, M)\varepsilon,
\end{equation}
and
\begin{equation}\label{iterscheme2}
\partial_0^{k}u_0(0, \cdot)=\psi_k\ (0\leq k\leq s),\ \   \partial_{\boldsymbol{\nu}}u_0=0\ \text{on}\ \partial\mo.
\end{equation}
Based on \eqref{iterscheme1}-\eqref{iterscheme2} and Lemma \ref{Compatible} (B), the existence of $\{u_l\}$ follows from Lemma \ref{Soblocal}.

We now show that for $M'=8(1+C)C(s, M)$ with $C$ given in \eqref{conlocal3}, such that for $T$ given in Lemma \ref{Soblocal},
\begin{equation}\label{Mestimate}
M_{s+1}[u_l]\leq M'\varepsilon
\end{equation}
uniformly hold for all $l$.

If we assume $M_{s+1}[u_l]\leq M'\varepsilon$, then by \eqref{GJ1} in Lemma \ref{Basice},
for problem \eqref{iterscheme}, there exists a $0<T^*<T$ independent on $l$ such that for $t\in [0, T^*]$
and $0<\varepsilon\leq \varepsilon_0$,
\begin{equation*}\label{iterestimate1}\begin{aligned}
&M_{s+1}(t, u_{l+1})\\[2mm]
\leq& Ce^{CM't\varepsilon}\left(M_{s+1}[u_0]+C(M'\varepsilon)\int_0^{t}M_{s+1}(\sigma, u_{l+1})d\sigma
+C(M'\varepsilon)M^2\varepsilon^2 t\right)\\[2mm]
&+C(M'\varepsilon)M'\varepsilon(M'\varepsilon+M_{s+1}(t,u_{l+1}))\\[2mm]
\leq&C(M'\varepsilon)\left(\int_0^{t}M_{s+1}(\sigma, u_{l+1})d\sigma+(M'\varepsilon)^2\right)+1/4 M'\varepsilon\\[2mm]
\leq&M'\varepsilon.
\end{aligned}
\end{equation*}
Here the last inequality comes from Gronwall's inequality. By induction method, we obtain \eqref{Mestimate}.

Meanwhile, we have
\begin{equation}\label{difference}\begin{cases}
\Box (u_{l+1}-u_{l})-\mathcal{Q}^{\alpha\beta}(\partial_0 u_l, J_1 u_l)\partial_{\alpha\beta}^2(u_{l+1}-u_l)=F^{l},\\[2mm]
\qquad\qquad\qquad\qquad\qquad\qquad   (t, x)\in [0, T^*]\times\mo,\\[2mm]
\partial_{\boldsymbol{\nu}}(u_{l+1}-u_l)=0,\qquad\qquad  \ (t, x)\in [0, T^*]\times\partial\mo,\\[2mm]
(u_{l+1}-u_{l})(0, x)=0, \ \partial_0 (u_{l+1}-\partial_0 u_{l})=0,\qquad   \ x\in\mo,
\end{cases}
\end{equation}
where
\begin{equation*}\begin{aligned}
F^{l}=&(\mathcal{Q}^{\alpha\beta}(\partial_0 u_{l}, J_1 u_{l})-\mathcal{Q}^{\alpha\beta}(\partial_0 u_{l-1}, J_1 u_{l-1}))\partial_{\alpha\beta}^2u_l\\[2mm]
 &+\mathcal{S}(\partial_0 u_{l}, J_1 u_{l})-\mathcal{S}(\partial_0 u_{l-1}, J_1 u_{l-1}).
\end{aligned}
\end{equation*}
Since $\mathcal{Q}^{\alpha\beta}(\partial_0 u_l, J_1 u_l)$ satisfies the ``admissible condition'' \eqref{AC} and $\|(q_{v_l}(t), \partial q_{v_l}(t))\|\lesssim M'\varepsilon$ due to \eqref{Mestimate}, it follows from the proof of Lemma \ref{Timenl} and \eqref{difference} that
\begin{equation*}
\partial_0\|\partial(u_{l+1}-u_l)(t, \cdot)\|\lesssim M'\varepsilon \|\partial(u_{l+1}-u_l)(t, \cdot)\|+\|F^{l}(t, \cdot)\|.
\end{equation*}
This implies
\begin{equation*}\begin{aligned}
\|\partial(u_{l+1}-u_{l})(t, \cdot)\|
\leq& Ce^{CM\varepsilon t}\int_0^{t}\|F^{l}(\sigma, \cdot)\|d\sigma\\[2mm]
\leq& C(M')\int_0^{t}\|\partial(u_{l}-u_{l-1})(\sigma, \cdot)\|d\sigma\\[2mm]
\leq& 2M'\frac{(C(M')T^*)^{l}}{l!},
\end{aligned}
\end{equation*}
by induction on $l$. Combining this with \eqref{Mestimate} shows that for $T^*$ suitably small, there exists a unique solution $u$ to
problem \eqref{Jpro} with $\partial_0^{j}u\in C([0, T^*]; H^{s+1-j}(\mo))\ (0\leq j\leq s+1)$.

In addition, \eqref{lexistence} follows from Lemma \ref{Compatible}, \eqref{conlocal3},
\eqref{iterscheme1}, and the induction argument. Here we omit the details. Thus
the proof of Theorem \ref{Lexistence} is completed.\qquad \qquad \qquad \qquad \qquad \quad $\square$\qed

\begin{rem}\label{Rweighted} When $(u_0, u_1)$ satisfies \eqref{intialcon}, similar to the discussions
at the beginning of Sect.5 in \cite{MC2}, we have
\begin{equation*}
\sup\limits_{0\leq t\leq 4}\sum\limits_{|\alpha|\leq 69}\|\left<x\right>^{\alpha}\partial^{\alpha}u(t, \cdot)\|\lesssim\varepsilon.
\end{equation*}
In addition, without loss of generality, we assume $T^*=4$ in Lemma \ref{Lexistence} for the simplicity of notations.
\end{rem}

\section{Global stability of 3-D compressible Chaplygin gases in exterior domain}\label{VII}

In this section, as an application of Theorem \ref{them1}, we are concerned with the global existence of a smooth  solution
to 3-D  compressible isentropic Euler system of Chaplygin gases in the exterior domain $\mo$.
The 3-D compressible isentropic  Euler system is
\begin{equation}\label{7.0}
\left\{
\begin{aligned}
&\p_t\rho+div (\rho u)=0,\\
&\p_t(\rho u)+div (\rho u \otimes u) + \nabla P=0,\\
\end{aligned}
\right.
\end{equation}
where $u=(u_1,u_2, u_3), \rho, P$ stand for the
velocity, density, pressure respectively.
For the Chaplygin gases, the equation of pressure  state (one can see \cite{CF} and so on)
is given by
$$P=P_0-\ds\f{A}{\rho},$$
where $P_0>0$ and $A>0$ are two positive constants, and $P>0$ for $\rho>A/P_0$. If $(\rho, u)\in C^1$ is a solution of \eqref{7.0} with $\rho>A/P_0$,
then \eqref{7.0} admits the following equivalent form
\begin{equation}\label{7.1}
\left\{
\begin{aligned}
&\p_t\rho+div(\rho u)=0,\\
&\p_tu+u\cdot\na u+\ds\f{\na P}{\rho}=0.\\
\end{aligned}
\right.
\end{equation}
We pose the initial-boundary data of \eqref{7.1} as follows:
\begin{equation}\label{7.2}
\left\{
\begin{aligned}
\rho(0,x)&=\bar\rho+\varepsilon\rho_0(x), \\
u(0,x)&=\varepsilon u_0(x)=\ve(u_1^0(x), u_2^0(x), u_3^0(x)),\\
\end{aligned}
\right.
\end{equation}
and
\begin{equation}\label{NBS7}
u\cdot\boldsymbol{\nu}=0,\ \ \text{on}\ \ \mathbb{R}_{+}\times\partial\mo,
\end{equation}
where  $\bar\rho>A/P_0$ is a constant, $\ve>0$
is a small parameter, and  $(\rho_0(x), u_0(x))
\in C_0^{\infty}(B(0, M)\cap\mo)$
(here and below $B(0, M)$ stands for a ball centered at the origin with a radius $M>0$,
and $\mo\cap B(0, M)\not=\emptyset$).
Moreover, $\rho(0,x)>A/P_0$, $P_0-\ds\f{A}{\bar\rho}>0$  hold and $u_0(x)$ satisfies $rot u_0(x)\equiv 0$
with $u_0(x)\cdot\boldsymbol{\nu}=0$ on $\mathbb{R}_{+}\times\partial\mo$.

Under the irrotational assumption $rot u_0(x)\equiv 0$, by \eqref{7.1}
and the finite propagation speed property of hyperbolic systems, we know
that $rot u(t,x)\equiv 0$ holds as long as the smooth solution $(\rho, u)$ of  \eqref{7.1}  exists.
Moreover $u(t,x)$ has a compact support in $x$ for any fixed $t\ge 0$. Consequently,
there exists a potential function $\varphi(t,x)$ such that $u=\nabla\varphi$.
It follows from the Bernoulli's law that
\begin{equation}\label{7.3}
\partial_t\varphi+\frac{1}{2}|\nabla\varphi|^2+h(\rho)=0,
\end{equation}
where $h(\rho)$ with $h'(\rho)=P'(\rho)/\rho$ and $h(\bar\rho)=0$ is the enthalpy of
the gases. Without loss of generality, we assume the sound speed
$c(\bar\rho)=\sqrt{P'(\bar\rho)}\equiv 1$. In this case, $h(\rho)=\f12-\f{A}{2\rho^2}$.
Substituting \eqref{7.3} and the expression of $h(\rho)$ into the equation $\partial_t\rho+div(\rho u)=0$
yields
\begin{equation*}\label{potent}
\partial_t^2\varphi+2\partial_i\varphi\partial_t\partial_i\varphi
+\partial_i\varphi\partial_j\varphi\partial_{ij}^2\varphi-(1+2\p_t\varphi+|\nabla\varphi|^2)\Delta\varphi=0.
\end{equation*}
On $\mathbb{R}_{+}\times\partial\mo$, by \eqref{NBS7}, we have the following boundary condition
\begin{equation*}\label{NB}
\partial_{\boldsymbol{\nu}}\varphi=0,\qquad \ \ (t, x)\in\mathbb{R}_{+}\times\partial\mo.
\end{equation*}
In addition, by \eqref{7.2} and $rot u_0(x)\equiv 0$, the initial data $\vp(0,x)$ and $\p_t\vp(0,x)$ can be determined as follows:
$$\vp(0,x)=\ve\int_M^{x_1}u_1^0(s,x_2,x_3)ds,\qquad \p_t\vp(0,x)=\ve\f{\rho_0(x)}{\bar\rho}+\ve^2g(x,\ve),$$
where $g(x,\ve)$ is smooth in its arguments  with compact support in $B(0, M)\cap\mo$.

Collecting those analysis above, we know that $\varphi$ satisfies the following Neumann-wave equation problem
\begin{equation}\label{pchap}\begin{cases}
\Box\varphi=Q(\partial\varphi, \partial^2\varphi),\ \ \ (t,x)\in\mathbb{R}_{+}\times\mo,\\[2mm]
\partial_{\boldsymbol{\nu}}\varphi=0,\qquad \qquad \ (t,x)\in\mathbb{R}_{+}\times\partial\mo,\\[2mm]
\varphi(0, x)=\varepsilon f_0(x),\ \ \partial_t\varphi(0, x)=\varepsilon f_1(x, \ve),\ x\in \mo,
\end{cases}
\end{equation}
where $f_0(x)$ and $f_1(x,\ve)$ are smooth in $x$ and have compact support in  $B(0, M)\cap\mo$, and
\begin{equation}\label{Polytropic}\begin{aligned}
Q(\partial\varphi, \partial^2\varphi)=&\mathcal{Q}^{\alpha\beta}(\partial\varphi)\partial_{\alpha\beta}^2\varphi\\[2mm]
=&-2\partial_i\varphi\partial_t\partial_i\varphi-\partial_i\varphi\partial_j\varphi\partial_{ij}^2\varphi
+(2\partial_t\varphi+|\nabla\varphi|^2)\Delta\varphi.
\end{aligned}
\end{equation}
Direct computation yields that the null condition holds for the equation in \eqref{pchap}.
In addition, for any smooth functions $v, w$ satisfying $\partial_{\boldsymbol{\nu}}v|_{\mathbb{R}_{+}\times\partial\mathcal{O}}=0$ and $\partial_{\boldsymbol{\nu}}w|_{\mathbb{R}_{+}\times\partial\mathcal{O}}=0$,
we have
\begin{equation*}\label{7.4}
\mathcal{Q}^{\alpha\beta}(\partial v)\boldsymbol{\nu}^{\alpha}\partial_{\beta}w
=(2\p_tv+|\na v|^2)\p_{\nu}w-\p_i v\p_iw\p_{\nu}v=0,\qquad\text{for}\ (t, x)\in\mathbb{R}_{+}\times\partial\mathcal{O},
\end{equation*}
which means that $Q(\partial\varphi, \partial^2\varphi)$ in \eqref{Polytropic} satisfies
the ``admissible condition'' \eqref{AC}. On the other hand, it is easy to check that
the compatibility conditions of arbitrary order
for the initial-boundary values in \eqref{pchap} holds. Thus as an application of Theorem \ref{them1}, we have

\begin{thm}\label{them2} For problem \eqref{7.1} together with \eqref{7.2} and \eqref{NBS7},
when $\ve>0$ is small, then there exists a global smooth solution $(\rho, u)\in C^{\infty}(\mathbb{R}_{+}\times\mo)$
with $\rho>A/P_0$ and $rot u(t,x)\equiv 0$.
\end{thm}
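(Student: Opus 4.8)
The plan is to deduce Theorem~\ref{them2} directly from Theorem~\ref{them1}, applied to the potential equation \eqref{pchap}, and then to run the reduction of the excerpt backwards, reconstructing $(\rho,u)$ from the global potential $\varphi$. First I would confirm that \eqref{pchap} meets every hypothesis of Theorem~\ref{them1}. The nonlinearity $Q(\partial\varphi,\partial^2\varphi)=\mathcal{Q}^{\alpha\beta}(\partial\varphi)\partial^2_{\alpha\beta}\varphi$ in \eqref{Polytropic} is of the quasilinear form \eqref{Null1} with $\mathcal{S}\equiv 0$; a direct computation of $\mathcal{Q}^{\alpha\beta}_\mu\omega_\mu\omega_\alpha\omega_\beta$ on the cone $\omega_0^2=\sum_i\omega_i^2$ shows the null condition \eqref{Null2} holds, and the ``admissible condition'' \eqref{AC} has already been checked in the excerpt using $\partial_{\boldsymbol{\nu}}v=\partial_{\boldsymbol{\nu}}w=0$ on $\mathbb{R}_+\times\partial\mo$. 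The Cauchy data $\varphi(0,x)=\varepsilon f_0(x)$, $\partial_t\varphi(0,x)=\varepsilon f_1(x,\varepsilon)$ are smooth; here one should record that $f_0(x)=\int_M^{x_1}u_1^0(s,x_2,x_3)\,ds$ is genuinely compactly supported, which follows from $\mathrm{rot}\,u_0\equiv 0$ together with the compact support of $u_0$ (these force $\int_{\mathbb R}u_1^0(s,x_2,x_3)\,ds$ to be constant in $(x_2,x_3)$ and to vanish). With this, the weighted smallness \eqref{intialcon} is automatic, and the infinite-order compatibility conditions of Theorem~\ref{them1} hold because $f_0,f_1$ vanish near $\partial\mo$.

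Granting these verifications, Theorem~\ref{them1} produces, for every sufficiently small $\varepsilon>0$, a unique global solution $\varphi\in C^\infty(\mathbb{R}_+\times\mo)$ of \eqref{pchap} obeying the pointwise bound $(1+t+|x|)(|\varphi(t,x)|+|\partial\varphi(t,x)|)\lesssim\varepsilon$ from \eqref{Decay}. I then set $u:=\nabla\varphi$, so that $\mathrm{rot}\,u\equiv 0$ automatically and $u\cdot\boldsymbol{\nu}=\partial_{\boldsymbol{\nu}}\varphi=0$ on $\mathbb{R}_+\times\partial\mo$, which is \eqref{NBS7}. The density is recovered from Bernoulli's law \eqref{7.3}: with $h(\rho)=\frac12-\frac{A}{2\rho^2}$ and the normalization $c(\bar\rho)=1$ forcing $\bar\rho=\sqrt A$, solving $h(\rho)=-\partial_t\varphi-\frac12|\nabla\varphi|^2$ gives $\rho=\sqrt{A}\,(1+2\partial_t\varphi+|\nabla\varphi|^2)^{-1/2}$. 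Because $|\partial\varphi|\lesssim\varepsilon$, the quantity $1+2\partial_t\varphi+|\nabla\varphi|^2$ stays within $O(\varepsilon)$ of $1$, so $\rho\in C^\infty(\mathbb{R}_+\times\mo)$ is within $O(\varepsilon)$ of $\bar\rho=\sqrt A$; since the hypothesis $\bar\rho>A/P_0$ reads $\sqrt A>A/P_0$, shrinking $\varepsilon$ gives $\rho>A/P_0$ everywhere.

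It remains to check that $(\rho,u)$ actually solves \eqref{7.1} with data \eqref{7.2}. Taking $\nabla$ of \eqref{7.3} and using $u=\nabla\varphi$, the identity $u\cdot\nabla u=\frac12\nabla|u|^2$ (valid since $\mathrm{rot}\,u=0$), and $\nabla h(\rho)=h'(\rho)\nabla\rho=\frac{P'(\rho)}{\rho}\nabla\rho=\frac{\nabla P}{\rho}$, yields the momentum equation $\partial_t u+u\cdot\nabla u+\frac{\nabla P}{\rho}=0$. Substituting the formula for $\rho$ and $u=\nabla\varphi$ into the continuity equation $\partial_t\rho+\mathrm{div}(\rho u)=0$ reduces it, after clearing the factor $1+2\partial_t\varphi+|\nabla\varphi|^2$, to exactly $\Box\varphi=Q(\partial\varphi,\partial^2\varphi)$, which holds by construction. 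The initial values match by the definitions of $f_0,f_1$, and uniqueness in $C^\infty$ follows from the local theory (Theorem~\ref{Lexistence}) together with the fact that any smooth solution of \eqref{7.1} with the data \eqref{7.2}--\eqref{NBS7} stays irrotational by finite propagation speed, hence is of potential form. The main obstacle is organizational rather than analytic: Theorem~\ref{them1} supplies all the decay estimates, so the work is to make the correspondence $\varphi\leftrightarrow(\rho,u)$ rigorously invertible in the regime $|\partial\varphi|\lesssim\varepsilon$ --- in particular keeping $\rho$ in the physical range $\rho>A/P_0$ --- and to confirm that the potential's Cauchy data genuinely fit the weighted-smallness and compatibility framework of Theorem~\ref{them1}.
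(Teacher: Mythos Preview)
Your proposal is correct and follows exactly the route the paper takes: verify that the potential equation \eqref{pchap} with nonlinearity \eqref{Polytropic} satisfies the null condition \eqref{Null2}, the admissible condition \eqref{AC}, the smallness \eqref{intialcon}, and the compatibility conditions, then invoke Theorem~\ref{them1} and recover $(\rho,u)$ from $\varphi$ via $u=\nabla\varphi$ and Bernoulli's law \eqref{7.3}. In fact you supply more detail than the paper does---the paper's proof is essentially the paragraph preceding the statement of Theorem~\ref{them2}, while you spell out the inversion $\rho=\sqrt{A}(1+2\partial_t\varphi+|\nabla\varphi|^2)^{-1/2}$, the bound $\rho>A/P_0$, and the recomputation of the continuity and momentum equations, all of which the paper leaves implicit.
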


\begin{rem}\label{7.A}  To our best knowledge, there are only few works on
the blowup or global existence of smooth solutions to quasilinear wave equations or
multi-dimensional compressible Euler systems in exterior domain (see \cite{GP1}-\cite{GP2}
for the symmetric solutions of 3-D quasilinear wave equations, and \cite{SP2} for 2-D slightly compressible ideal flow
respectively).
As an application of Theorem \ref{them1}, the global stability of the static Chaplygin gases
outside a three dimensional obstacle $\mk$ is established in Theorem \ref{them2}.

\end{rem}

\appendix
\renewcommand{\appendixname}{Appendix\Alpha{section}}

\section{Auxiliary lemmas}\label{A}

In this appendix, we establish the following two lemmas.

\begin{lem}\label{Aux} If $v\in C^{\infty}(\mo)$ has the property $\p_{\nu}v|_{\p\mo}=0$, moreover,
$v\equiv 0$ for sufficiently large $|x|$, then if $R<t/2$ and $t\geq 1$,
\begin{equation}\label{west1}\begin{aligned}
\|\partial^2 v(t, \cdot)\|_{L^2(R/2\leq |x|\leq R)}&\lesssim \frac{1}{t}\sum\limits_{|\alpha|\leq 1}\|\Gamma^{\alpha}\partial v(t, \cdot)\|_{L^2(R/4\leq |x|\leq 2R)}+\|\Box v(t, \cdot)\|_{L^2(R/4\leq |x|\leq 2R)}\\[2mm]
&+\|\left<\cdot\right>^{-1}\partial v(t, \cdot)\|_{L^2(R/4\leq |x|\leq 2R)},
\end{aligned}
\end{equation}
\begin{equation}\label{west2}
\|\left<t-|\cdot|\right>\partial^2 v(t, \cdot)\|_{L^2(|x|\geq t/4)}\lesssim \sum\limits_{|\alpha|\leq 1}\|\Gamma^{\alpha}\partial v(t, \cdot)\|+\|\left<t+|\cdot|\right>\Box v(t, \cdot)\|,
\end{equation}
and
\begin{equation}\label{west3}
\|\partial v(t, \cdot)\|_{L^{6}(\{|x|\notin [(1-\delta)t, (1+\delta)t], |x|\geq \delta t\}\cap\mo)}
\lesssim\frac{1}{t}\left(\sum\limits_{|\alpha|\leq 1}\|\Gamma^{\alpha}\partial v(t, \cdot)\|
+\|\left<t+|\cdot|\right>\Box v(t, \cdot)\|\right).
\end{equation}

\end{lem}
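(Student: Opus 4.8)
The plan is to reduce each of the three inequalities to a \emph{weighted Klainerman--Sobolev / elliptic} argument on dyadic annuli, exploiting the fact that away from the light cone the wave operator behaves like an elliptic operator up to controllable error. For \eqref{west1}, fix $R<t/2$ and note that on the annulus $\{R/4\le |x|\le 2R\}$ one has $|x|\le t/2$, hence $\langle t-|x|\rangle\sim \langle t+|x|\rangle\sim t$; in particular this region is strictly inside the light cone. Write $\Box v=\partial_t^2 v-\Delta v$, so $\Delta v=\partial_t^2 v-\Box v$, and apply the interior elliptic regularity estimate \eqref{interior} of Lemma \ref{Elliptic} (which is purely local and insensitive to the boundary condition, since $R/4>1$ and the annulus does not meet $\partial\mo$) to the function $v$ at the fixed time $t$: this bounds $\|\nabla^2 v(t,\cdot)\|_{L^2(R/2\le|x|\le R)}$ by $\|\nabla v(t,\cdot)\|_{L^2(R/4\le|x|\le 2R)}+\|\Delta v(t,\cdot)\|_{L^2(R/4\le|x|\le 2R)}$. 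It remains to absorb the two spatial derivatives $\partial_t\partial_i v$ and $\partial_t^2 v$ appearing in $\partial^2 v$. The term $\partial_t\nabla v$ is a first spatial derivative of $\partial_t v$, so a second application of \eqref{interior} to $\partial_t v$ handles it. The term $\partial_t^2 v$ is the delicate one: using $L=t\partial_0+x^i\partial_i$ (or rather $\mathcal{L}$ near the obstacle, but here we are far from it) write $t\partial_0 v=Lv-x^i\partial_i v$, so $t\partial_0^2 v=\partial_0(Lv)-\partial_0(x^i\partial_i v)=L\partial_0 v-x^i\partial_i\partial_0 v$ up to a harmless $\partial_0 v$ term; dividing by $t$ and using $|x|\lesssim R\lesssim t$ on the annulus yields $\|\partial_0^2 v\|_{L^2(R/2\le|x|\le R)}\lesssim t^{-1}\|\Gamma\partial v\|_{L^2(R/4\le|x|\le 2R)}+\|\nabla^2 v\|_{L^2(R/2\le|x|\le R)}+t^{-1}\|\partial v\|$, and the middle term is reabsorbed by what was already bounded. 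Collecting, and writing the $t^{-1}\|\nabla v\|$ output of the elliptic step as part of the $\langle\cdot\rangle^{-1}\partial v$ term (since $\langle|x|\rangle\sim R\le t$ there — actually one keeps $\|\nabla v\|$ as is, or bounds it by $t\cdot\langle\cdot\rangle^{-1}\|\nabla v\|$ only when $R\sim t$; cleaner is to keep the stated three-term form by noting $\|\nabla v\|_{L^2(\text{annulus})}$ is already of the claimed type after the trivial bound $1\lesssim t\langle|x|\rangle^{-1}$ on $R/2\le|x|\le R$ fails for small $R$ — so in fact one keeps $\|\nabla^2 v\|\lesssim t^{-1}\|\Gamma\nabla v\|+\|\Box v\|+\|\langle\cdot\rangle^{-1}\nabla v\|$ by observing that $\nabla v = t^{-1}(Lv - x^i\partial_i v)$-type identities convert one $\nabla v$ into $t^{-1}\Gamma v$ plus $\langle\cdot\rangle^{-1}\cdot(\text{bounded})$). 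This gives \eqref{west1}.

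For \eqref{west2}, I would run the same elliptic-plus-$L$ mechanism but now \emph{globally} on $\{|x|\ge t/4\}$ and track the weight $\langle t-|x|\rangle$. The key identity is that in the exterior region $|x|\ge t/4$ the coefficient relating $\partial_t^2$ to the ``good'' derivatives carries exactly the factor $\langle t-|x|\rangle/\langle t+|x|\rangle$ already seen in Lemma \ref{Null}: writing $(t^2-r^2)\partial_t^2$ in terms of $L^2$, $L\Omega$-type combinations and the wave operator, one gets $\langle t-|x|\rangle|\partial^2 v|\lesssim \sum_{|\alpha|\le 1}|\Gamma^\alpha\partial v| + \langle t+|x|\rangle|\Box v|$ pointwise (for $|x|\ge t/4$, where $\langle t+|x|\rangle\sim\langle t-|x|\rangle+\langle |x|\rangle$ and $t+r\gtrsim r$, so $\langle t-r\rangle\langle t+r\rangle^{-1}\cdot\langle t+r\rangle^2 = \langle t-r\rangle\langle t+r\rangle$ is absorbed into the single weight $\langle t+|x|\rangle$ on $\Box v$), then take $L^2$ in $x$. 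Here again the term $\partial_t\nabla v$ and $\nabla^2 v$ are supplied by interior elliptic regularity \eqref{interior} summed over a dyadic decomposition of $\{|x|\ge t/4\}$ (the boundary estimate \eqref{boundary} is not needed since $t\ge 1$ forces $t/4$ to eventually exceed $3$; for small $t$ one supplements with \eqref{boundary} near $\partial\mo$, which is harmless as the weight $\langle t-|x|\rangle$ is then $O(1)$ and $\|\Box v\|$ with weight $\sim1$ dominates).

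For \eqref{west3}, I would interpolate: on the region $\mathcal{A}_\delta=\{|x|\notin[(1-\delta)t,(1+\delta)t],\ |x|\ge\delta t\}\cap\mo$ one has $\langle t-|x|\rangle\gtrsim \delta t \gtrsim \delta\langle t+|x|\rangle/C$, so the weight in \eqref{west2} is comparable to $t$ on $\mathcal A_\delta$; hence \eqref{west2} restricted to $\mathcal A_\delta$ gives $t\|\partial^2 v\|_{L^2(\mathcal A_\delta)}\lesssim \sum_{|\alpha|\le1}\|\Gamma^\alpha\partial v\|+\|\langle t+|\cdot|\rangle\Box v\|$. Combining this with the plain $L^2$ bound on $\partial v$ itself (which is $\le t^{-1}$ times the same right-hand side, using $\|\partial v\|\lesssim t^{-1}\|\Gamma\partial v\| + \dots$ via the identity $r\partial_r v = \Omega$-free part of $Lv$ etc.) and the Gagliardo--Nirenberg/Sobolev embedding $\|\partial v\|_{L^6}\lesssim \|\nabla\partial v\|_{L^2}^{1/2}\|\partial v\|_{L^2}^{1/2}$ — or more simply the $3$-D Sobolev inequality $\|f\|_{L^6(\R^3)}\lesssim\|\nabla f\|_{L^2(\R^3)}$ applied to a cutoff of $\partial v$ localized to $\mathcal A_\delta$, absorbing the cutoff's gradient into the lower-order $\|\partial v\|_{L^2}$ term — yields $\|\partial v\|_{L^6(\mathcal A_\delta)}\lesssim \|\nabla\partial v\|_{L^2(\mathcal A_\delta')}+\|\partial v\|_{L^2(\mathcal A_\delta')}\lesssim t^{-1}(\sum_{|\alpha|\le1}\|\Gamma^\alpha\partial v\|+\|\langle t+|\cdot|\rangle\Box v\|)$, which is \eqref{west3}. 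The main obstacle throughout is bookkeeping the second time-derivative $\partial_t^2 v$: neither the vector fields $\Gamma=\{Z,L\}$ nor the elliptic estimate controls it directly, and one must insert the correct power of $t$ (resp. $\langle t-|x|\rangle$) via the $L^2 = t^2\partial_t^2 + \dots$ expansion while verifying that the $\langle t+|x|\rangle$ weight on $\Box v$ in the exterior region is exactly what the algebra produces — this is where one uses $|x|\ge t/4$ (resp. the $\delta$-separation from the cone) crucially to keep $\langle t-r\rangle\langle t+r\rangle^{-1}$ bounded below and thus convert it into a usable weight.
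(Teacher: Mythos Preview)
Your approach is essentially correct and rests on the same two ingredients as the paper: a Klainerman--Sideris weighted bound on second derivatives, and scaled interior elliptic regularity to pass from $\Delta v$ to $\nabla^2 v$. Two streamlining differences are worth noting. First, instead of re-deriving the control on $\partial_0^2 v$ from $t\partial_0 = L - x^i\partial_i$ and iterating (which, as you yourself flag, risks an annulus-mismatch when feeding $\|\Delta v\|$ back through the elliptic estimate), the paper simply invokes Lemma~2.3 of \cite{KS4} for the \emph{pointwise} inequality
\[
\langle t-r\rangle\bigl(|\partial\partial_0 v|+|\Delta v|\bigr)\lesssim \sum_{|\alpha|\le 1}|\Gamma^\alpha\partial v|+\langle t+r\rangle|\Box v|,
\]
which immediately handles all time derivatives in \eqref{west1}--\eqref{west2} and leaves only $\nabla^2 v$ for the scaled elliptic step $\|\nabla^2 v\|_{L^2(R/2\le|x|\le R)}\lesssim R^{-1}\|\nabla v\|+\|\Delta v\|$. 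Second, for \eqref{west2} the paper does not sum dyadically over $\{|x|\ge t/4\}$ but rather applies the known boundaryless $L^2$ Klainerman--Sideris estimate to the cutoff $g=\varrho(8x/\langle t\rangle)v$ and absorbs the commutator terms by Hardy's inequality (Lemma~\ref{Hardy}); this cleanly avoids the small-$t$ boundary issue you mention. For \eqref{west3} your Sobolev-after-cutoff argument is exactly what the paper does, but be sure to retain the factor $t^{-1}$ on $\|\partial v\|_{L^2}$ coming from the cutoff gradient at scale $\delta t$: in your final chain you write $\|\nabla\partial v\|_{L^2}+\|\partial v\|_{L^2}$, and without that $t^{-1}$ the last term does not carry the claimed decay.
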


\begin{proof} By Lemma 2.3 in \cite{KS4}, one has
\begin{equation}\label{west4}
\left<t-|x|\right>(|\partial\partial_0 v|+|\Delta v|)(t, x)\lesssim
\sum\limits_{|\alpha|\leq 1}|\Gamma^{\alpha}\partial v(t, x)|+\left<t+|x|\right>|\Box v(t, x)|.
\end{equation}
In addition, it follows from Lemma \ref{Elliptic} and scaling skill that
\begin{equation*}
\|\nabla^2 v(t, \cdot)\|_{L^2(R/2\le |x|\le R)}
\lesssim R^{-1}\|\partial v(t, \cdot)\|_{L^2(R/4\le |x|\le 2R)}
+\|\Delta v(t, \cdot)\|_{L^2(R/4\le |x|\le 2R)}.
\end{equation*}
Combining this estimate with \eqref{west4} shows \eqref{west1}.

Next we prove \eqref{west2}. By \cite{KS4}, for $g\in C_0^{\infty}(\mathbb{R}_{+}\times\mathbb{R}^3)$, one has
\begin{equation}\label{2.0}
\|\left<t-|\cdot|\right>\nabla^2 g(t, \cdot)\|_{L^2(\mathbb{R}^3)}\lesssim \sum\limits_{|\alpha|\leq 1}\|\Gamma^{\alpha}\partial g(t, \cdot)\|_{L^2(\mathbb{R}^3)}+\|\left<t+|\cdot|\right>\Box g\|_{L^2(\mathbb{R}^3)}.
\end{equation}
Set $g(t, x)=\rho(8x/\left<t\right>)v(t, x)$. Then one has that from \eqref{2.0} and Lemma \ref{Hardy},
\begin{equation*}
\|\left<t-|\cdot|\right>\nabla^2 v(t, \cdot)\|_{L^2(|x|\geq t/4)}\lesssim\sum\limits_{|\alpha|\leq 1}\|\Gamma^{\alpha}\partial v(t, \cdot)\|+\|\left<t+|\cdot|\right>\Box v(t, \cdot)\|.
\end{equation*}
Combining this with \eqref{west4} yields \eqref{west2}.

\eqref{west3} comes from the proof of \eqref{west1} and the fact that for $t\ge 1$
\begin{equation*}\begin{aligned}
&\|\partial v(t, \cdot)\|_{L^6(\{|x|\notin [(1-\delta)t, (1+\delta)t], |x|\geq \delta t\}\cap\mo)}\\[2mm]
\lesssim& \|\nabla\partial v(t, \cdot)\|_{L^2(\{|x|\notin [(1-\delta/2)t, (1+\delta/2)t], |x|\geq \delta t/2\}\cap\mo)}
+\|\partial v(t, \cdot)\|/t,
\end{aligned}
\end{equation*}
where the factor $1/t$ in the last term comes from the scaling skill.\qquad
\qquad\qquad\qquad\qquad\qquad$\square$\end{proof}

\begin{lem}\label{Inter} Let $v, w\in C^{\infty}(\mathbb{R}_{+}\times\mo)$ with  $\p_{\nu}v|_{\mathbb{R}_{+}\times\p\mo}=0$
and  $\p_{\nu}w|_{\mathbb{R}_{+}\times\p\mo}=0$. Moreover $v\equiv 0$ and $w\equiv 0$ for sufficiently large $|x|$. Then
for $R\ge 1$ and $t\ge 1$,
\begin{equation}\label{inter}\begin{aligned}
&\int_{\mo}\frac{\left<t-|x|\right>}{\left<t+|x|\right>\left<x\right>}|\partial^2 v(t, x)|\partial w(t, x)|dx\\[2mm]
\lesssim& \frac{1}{t}\left(\sum\limits_{|\alpha|\leq 1}\|\Gamma^{\alpha}\partial v(t, \cdot)\|+\|\left<t+|\cdot|\right>\Box u(t, \cdot)\|\right)\|\partial w(t, \cdot)/\left<\cdot\right>\|\\[2mm]
&+\sum\limits_{R=2^{k}<t/2}\left(\|\frac{1}{\left<\cdot\right>}\partial v(t, \cdot)\|_{L^2(R/4\leq |x|\leq 2R)}
+\|\frac{1}{\left<\cdot\right>^2}v(t, \cdot)\|_{L^2(R/4\leq |x|\leq 2R)}\right)\\
&\quad \times\|\frac{1}{\left<\cdot\right>}\|\partial w(t, \cdot)\|_{L^2(R/4\leq |x|\leq 2R)}.
\end{aligned}
\end{equation}
\end{lem}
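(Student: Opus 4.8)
The plan is to split the integral over $\mo$ into a ``near-light-cone'' part, where $|x|\in[(1-\delta)t,(1+\delta)t]$ for some fixed small $\delta$, together with the bounded region $|x|\le \delta t$, and a complementary ``away-from-cone'' part where $|x|$ is comparable to neither $t$ nor small. On the near-cone and small-$|x|$ region the weight $\left<t-|x|\right>/\left<t+|x|\right>$ is harmless only in the small-$|x|$ part; the genuinely useful gain there comes from $\left<t-|x|\right>\lesssim\left<t+|x|\right>$ so that $\left<t-|x|\right>/(\left<t+|x|\right>\left<x\right>)\lesssim 1/\left<x\right>$, and then I apply Cauchy--Schwarz in $x$ together with the weighted elliptic/commutator bound \eqref{west2} of Lemma \ref{Aux}. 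Concretely, writing the integrand as $\big(\tfrac{\left<t-|x|\right>}{\left<t+|x|\right>}|\partial^2 v|\big)\cdot\big(\tfrac{1}{\left<x\right>}|\partial w|\big)$ and using $\left<t-|x|\right>\left<t+|x|\right>^{-1}\le 1$ combined with the pointwise factor $\left<x\right>^{-1}\lesssim t^{-1}$ that is available precisely on $|x|\ge t/4$, Cauchy--Schwarz followed by \eqref{west2} yields the first term on the right-hand side of \eqref{inter}; on $|x|\le t/4$ one instead keeps $\left<t-|x|\right>\left<t+|x|\right>^{-1}\lesssim 1$ and $\left<x\right>^{-1}$, then Cauchy--Schwarz gives the contribution that will be absorbed into the dyadic sum below after applying \eqref{west1}.

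For the away-from-cone region I dyadically decompose $\{|x|\le t/4\}\cap\mo$ into annuli $R/2\le |x|\le R$ with $R=2^k<t/2$ (the region $|x|\ge t/4$ having already been handled). On each annulus $\left<x\right>\sim R$ and $\left<t-|x|\right>/\left<t+|x|\right>\sim 1$, so the local integral is $\lesssim R^{-2}\int_{R/2\le|x|\le R}|\partial^2 v||\partial w|\,dx$. Apply Cauchy--Schwarz on the annulus, then invoke \eqref{west1} of Lemma \ref{Aux} to bound $\|\partial^2 v(t,\cdot)\|_{L^2(R/2\le|x|\le R)}$ by $t^{-1}\sum_{|\alpha|\le1}\|\Gamma^\alpha\partial v\|_{L^2(R/4\le|x|\le 2R)}+\|\Box v\|_{L^2(R/4\le|x|\le 2R)}+\|\left<\cdot\right>^{-1}\partial v\|_{L^2(R/4\le|x|\le 2R)}$. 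The $\Box v$ term is converted to the form in the statement by $\|\Box v\|_{L^2(R/4\le|x|\le2R)}\lesssim R^{-1}\|\left<t+|\cdot|\right>\Box v\|$ on the annulus since $\left<t+|x|\right>\sim t\gtrsim R$, and is then summable against the geometric decay in $R$; the remaining annular pieces fit directly into the displayed dyadic sum. One also uses the Hardy inequality (Lemma \ref{Hardy}), or rather its local-annulus version, to relate $\|\left<\cdot\right>^{-2}v\|_{L^2(R/4\le|x|\le2R)}$ to $\|\left<\cdot\right>^{-1}\partial v\|$ only if needed; in fact the $v$-itself term in \eqref{inter} is there to accommodate the commutator when \eqref{west1} is applied after writing $\partial^2 v$ in terms of $\Gamma$-derivatives, so I would keep it as an independent summand.

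The main obstacle is the bookkeeping of the overlapping annuli and the geometric summation in $R$: one must check that after applying \eqref{west1} on each dyadic shell $R/2\le|x|\le R$ with the enlarged shell $R/4\le|x|\le 2R$ on the right, the bounded overlap of the enlarged shells (each point lies in boundedly many of them) lets the $R$-sum of the $L^2$ norms be controlled by a single global norm up to the loss recorded as the dyadic sum in \eqref{inter}; and that the factors $t^{-1}$ and $R^{-1}$ combine correctly so that the $R$-sum of $R^{-1}$ (against the global $\Box$ term) converges. A secondary point requiring care is the boundary: since $\p_\nu v=\p_\nu w=0$ on $\p\mo$, the elliptic estimate near $\p\mo$ (estimate \eqref{boundary} of Lemma \ref{Elliptic}, as already packaged into \eqref{west1}) applies on the innermost shell, so no boundary term is produced, but one should state explicitly that the smallest dyadic shells are replaced by the fixed region $|x|\le 3$ on which \eqref{boundary} is used instead of the scaled interior estimate. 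With these points addressed, collecting the near-cone, far-field, and dyadic contributions gives \eqref{inter}.
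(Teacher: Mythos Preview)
Your approach is essentially the paper's: split $\mo$ into $\{|x|\ge t/4\}$, handled by Cauchy--Schwarz and \eqref{west2}, and the dyadic shells $R/2\le|x|\le R$ with $R=2^k<t/2$, handled by Cauchy--Schwarz and \eqref{west1}. The ``near-light-cone'' framing at the start is extra overhead that the paper does not use; it simply pulls $\langle t+|x|\rangle^{-1}\lesssim t^{-1}$ out globally, keeps $\langle t-|x|\rangle$ with $\partial^2 v$ and $\langle x\rangle^{-1}$ with $\partial w$, and applies Cauchy--Schwarz plus \eqref{west2} on $\{|x|\ge t/4\}$.

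Two small corrections to your write-up. First, on $\{|x|\ge t/4\}$ you should extract the factor $t^{-1}$ from $\langle t+|x|\rangle^{-1}$, not from $\langle x\rangle^{-1}$: you need the weight $\langle x\rangle^{-1}$ to remain attached to $\partial w$ so that the right-hand side reads $\|\partial w/\langle\cdot\rangle\|$ as in \eqref{inter}, and you need $\langle t-|x|\rangle$ to remain attached to $\partial^2 v$ so that \eqref{west2} applies; using $\langle t-|x|\rangle/\langle t+|x|\rangle\le 1$ throws away exactly the weight you need. Second, on each dyadic shell the weight is $\sim R^{-1}$, not $R^{-2}$; one factor of $\langle\cdot\rangle^{-1}\sim R^{-1}$ goes to $\partial w$, and the second one appearing in the dyadic sum of \eqref{inter} comes from the term $\|\langle\cdot\rangle^{-1}\partial v\|$ on the right of \eqref{west1}. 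With these adjustments the bookkeeping you describe (bounded overlap, summing the $t^{-1}\|\Gamma^\alpha\partial v\|$ and $\|\Box v\|\lesssim t^{-1}\|\langle t+|\cdot|\rangle\Box v\|$ pieces via Cauchy--Schwarz in $R$) is exactly what the paper does.
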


\begin{proof} By Schwartz's inequality and \eqref{west1}-\eqref{west2} in Lemma \ref{Aux}, we have
\begin{equation*}\begin{aligned}
&\int_{\mo}\frac{\left<t-|x|\right>}{\left<t+|x|\right>\left<x\right>}|\partial^2 v(t, x)| \partial w(t,x)|dx\\[2mm]
\lesssim& t^{-1}\|\left<t-|\cdot|\right>\partial^2 v(t, \cdot)\|_{L^2(|x|\geq t/4)}\|
\left<\cdot\right>^{-1}\partial w(t, \cdot)\|_{L^2(|x|\geq t/4)}\\[2mm]
&+\sum\limits_{R=2^k<t/2}t^{-1}\|\left<t-|\cdot|\right>\partial^2 v(t, \cdot)\|_{L^2(R/2\leq |x|\leq R)}\|
\left<\cdot\right>^{-1}\partial w(t, \cdot)\|_{L^2(R/2\leq |x|\leq R)}\\[2mm]
\lesssim& t^{-1}\left(\sum\limits_{|\alpha|\leq 1}\|\Gamma^{\alpha}\partial v(t, \cdot)\|+\|\left<t+|\cdot|\right>\Box v(t, \cdot)\|\right)\|\left<\cdot\right>^{-1}\partial w(t, \cdot)\|\\[2mm]
&+\sum\limits_{R=2^{k}<t/2}\left(\|\left<\cdot\right>^{-1}\partial v(t, \cdot)\|_{L^2(R/4\leq |x|\leq 2R)}\|
+\|\left<\cdot\right>^{-2}v(t, \cdot)\|_{L^2(R/4\leq |x|\leq 2R)}\right)\\[2mm]
&\quad \times\|\left<\cdot\right>^{-1}\partial w(t, \cdot)\|_{L^2(R/4\leq |x|\leq 2R)}.
\end{aligned}
\end{equation*}
This yields \eqref{inter} and then  Lemma \ref{Inter} is proved.
\qquad \qquad \qquad \qquad \qquad \qquad \qquad \qquad $\square$\end{proof}

\end{document}